\newcommand{\tnorm}[1]{{\left\vert\kern-0.25ex\left\vert\kern-0.25ex\left\vert #1 
		\right\vert\kern-0.25ex\right\vert\kern-0.25ex\right\vert}}
\crefname{hypothesis}{Hypothesis}{Hypotheses}
\crefname{ALC@unique}{Line}{Lines}
\colorlet{texcscolor}{blue!50!black}
\colorlet{texemcolor}{red!70!black}
\colorlet{texpreamble}{red!70!black}
\colorlet{codebackground}{black!25!white!25}
\newcommand{\R}{{\mathbb R}}
\def\epsilon{\varepsilon}
\def\s{\mathbf{s}}
\def\t{\mathbf{t}}
\renewcommand{\v}[1]{\underline{#1}}
\newcounter{ind}
\def\undertilde#1{{\baselineskip=0pt\vtop
{\hbox{$#1$}\hbox{$\scriptscriptstyle\sim$}}}{}}
\def\epsilon{\varepsilon}
\def\undertilde#1{{\baselineskip=0pt\vtop
{\hbox{$#1$}\hbox{$\scriptscriptstyle\sim$}}}{}}
\def\nabr1{\undertilde{\nabla}_{r_1}\,}
\def\nabr2{\undertilde{\nabla}_{r_2}\,}
\def\w{\mathbf{w}}
\def\u{\mathbf{u}}
\def\W{\mathrm{W}}
\def\f{\mathbf{f}}
\def\u{\mathbf{u}}
\def\v{\mathbf{v}}
\def\w{\mathbf{w}}
\def\z{\mathbf{z}}
\def\y{\mathbf{y}}
\def\n{\mathbf{n}}
\def\x{x}
\lstdefinestyle{siamlatex}{%
	style=tcblatex,
	texcsstyle=*\color{texcscolor},
	texcsstyle=[2]\color{texemcolor},
	keywordstyle=[2]\color{texemcolor},
	moretexcs={cref,cref,maketitle,mathcal,text,headers,email,url},
}
\DeclareTotalTCBox{\code}{ v O{} }
{ 
	fontupper=\ttfamily\color{black},
	nobeforeafter,
	tcbox raise base,
	colback=codebackground,colframe=white,
	top=0pt,bottom=0pt,left=0mm,right=0mm,
	leftrule=0pt,rightrule=0pt,toprule=0mm,bottomrule=0mm,
	boxsep=0.5mm,
	#2}{#1}
\newcommand*{\bdiv}{%
	\nonscript\mskip-\medmuskip\mkern5mu%
	\mathbin{\operator@font div}\penalty900\mkern5mu%
	\nonscript\mskip-\medmuskip
}
\patchcmd\newpage{\vfil}{}{}{}
	\title{ Divergence conforming DG method for the optimal control of the Oseen equation with variable viscosity}
	\title{ Divergence conforming DG method for the optimal control of the Oseen equation with variable viscosity}
\author{ Harpal Singh
\thanks{Department of Mathematics, Indian Institute of Technology Roorkee, Roorkee 247667, India. E-mail: {\tt harpal\_s@ma.iitr.ac.in}. HS is supported by Council of Scientific and Industrial Research (CSIR), India (Award no. 09/0143(16048)/2022-EMR-I).}
\and Arbaz Khan 
\thanks{ Department of Mathematics, Indian Institute of Technology Roorkee, Roorkee 247667, India. E-mail: {\tt arbaz@ma.iitr.ac.in}}
}
\begin{document}
\maketitle
\date{\today}
\begin{abstract}
This study introduces the divergence-conforming discontinuous Galerkin finite element method (DGFEM) for numerically approximating optimal control problems with distributed constraints, specifically those governed by stationary generalized Oseen equations. We provide optimal a priori error estimates in energy norms for such problems using the divergence-conforming DGFEM approach. Moreover, we thoroughly analyze $L^2$ error estimates for scenarios dominated by diffusion and convection. Additionally, we establish the new reliable and efficient a posteriori error estimators for the optimal control of the Oseen equation with variable viscosity. Theoretical findings are validated through numerical experiments conducted in both two and three dimensions.\end{abstract}

\begin{keywords}  Linear quadratic optimal control problems, a posteriori error estimates, divergence-conforming DG finite element methods, generalized Oseen equations.
\end{keywords}
\begin{AMS} 49K20, 49N10, 65N12, 65N30, 76M10.\end{AMS}

\section{Introduction} \label{Introduction.} \setcounter{equation}{0}
In the last few decades, the manipulation of fluid flow and the computation of physical properties through the identification of optimal control values have become a central focus in computational fluid dynamics (CFD) and various scientific and engineering disciplines. {Moving beyond the mere approximation of fluid variables like velocity, temperature, and viscosity, the focus now lies on directing the fluid towards a desired state or elucidating its properties under specific conditions.} These challenges in fluid flow are formulated as optimization problems, where constraint equations originate from the principles of fluid dynamics. Such problems are termed partial differential equation (PDE)-constrained optimal control problems, involving the simultaneous solution of governing PDEs and an optimization problem.
Exploring the regulation of flows encompasses a wide spectrum, addressing vital aspects such as weather control and practical applications like managing blood flow \cite{GTJ}, aeronautics \cite{GMD}, optimizing vehicle aerodynamics \cite{HWG}, wastewater treatment \cite{ACM}, modeling groundwater reserves and reservoir simulation \cite{JAI}, and mitigating air pollution through the regulation of industrial pollutant emissions to maintain acceptable pollution levels \cite{ZJQ}, as well as drag reduction.  
Finite element methods (FEM) emerge as precise and efficient approaches, employing weighted residuals to integrate partial differential equations over an element by multiplying weight functions, commonly known as shape functions. A notable advantage lies in their adaptability for use on complex geometries without incurring additional expenses.

The examination of optimal control problems governed by specific PDEs poses a significant challenge due to the interdependence of control, state, and co-state equations. To address this challenge, various numerical strategies have been devised and continually refined. Initially,  Falk \cite{FRO} and Geveci \cite{GTO} explored conforming  
FEM discretizations for optimal control problems. {They utilized piecewise linear polynomials for discretizing state and co-state variables, while control variable was discretized using piecewise constant polynomial.} In pursuit of a more accurate approximation of the control, Casas et al. \cite{CFU} introduced piecewise linear approximation for control and provided a comprehensive convergence analysis. For control variables, convergence rates of $O(h)$ and $O(h^{\frac{3}{2}})$ were established for piecewise constant and linear discretizations, respectively. Hinze introduced a variational discretization approach to enhance the convergence rate to $O(h^{2})$ in \cite{HVD}.
\subsection{Optimal control problem}
An illustration of fluid flow challenges involves the general-ized Oseen equations, a commonly employed linearization of the Navier Stokes equations describing the low-Reynolds-number flow of viscous, incompressible fluids. Numerous well-known problems, such as minimizing the drag coefficient of a body in motion relative to a fluid and designing airfoils in aerodynamics, constitute applications of optimal control problems that incorporate the Oseen equations.
Consider $\Omega$, an open, bounded Lipschitz polygon in $\R^d$, with $\Gamma$ representing its boundary, defined as $\partial\Omega$. 
For a given regularization (or control cost) parameter $\lambda>0$ and a desired velocity field  $\mathbf{y}_d \in [L^{2}(\Omega)]^d$, define
\begin{align}\label{1.1}
  J(\mathbf{y,u}) = \frac{1}{2} \|\mathbf{y}-\mathbf{y}_d\|_{[L^{2}(\Omega)]^d}^2
	 +\frac{\lambda }{2}  \|\mathbf{u}\|_{[L^{2}(\Omega)]^d}^2.
\end{align}
We aim to address an optimization challenge, specifically, the minimization of the functional $J$ mentioned above, while adhering 
to the steady-state generalized Oseen equations with control. Within the domain $\Omega \subset \R^d$, our objective is to determine a velocity 
field denoted as $\y$ and a pressure denoted as $p$, satisfying the following equations:
\begin{align}\label{1.2}
\begin{cases}
-\nabla\cdot( \nu(x)\nabla \mathbf{y}) + (\boldsymbol{\beta}\cdot\nabla)\mathbf{y} + \sigma \mathbf{y}  + \nabla p &= \mathbf{f} + \mathbf{u} \ \ \text{in} \  \Omega,\\
\hspace{4cm}\nabla \cdot\mathbf{y} &= 0 \ \ \ \  \ \ \ \ \text{in} \  \Omega,\\
\hspace{4.4cm} \mathbf{y} &= 0 \ \ \ \  \ \ \ \ \text{on} \ \Gamma,\\ 
\end{cases} 
\end{align}
with the control constraints 
\begin{align}\label{1.3}
\mathbf{u}_a(x) \le \mathbf{u}(x) \le \mathbf{u}_b(x) \ \text{for} \ \text{a.e.} \  x \in \Omega,
\end{align}
where $\mathbf{u}_a, \mathbf{u}_b \in \R^d$ with $\mathbf{u}_a < \mathbf{u}_b$ (considered componentwise), $\nu\in L^\infty(\Omega)$ is the viscosity coefficient, $\boldsymbol{\beta}: \Omega \rightarrow \R^d$ is a vector field (belonging to $W^{1,\infty}(\Omega)$), $\sigma$ is a positive scalar function, $\mathbf{\f} \in [L^{2}(\Omega)]^d$ is given external body force and $\mathbf{\u} \in [L^{2}(\Omega)]^d$ is the control. 
Additionally, the parameters $\nu$, $\sigma$, and $\boldsymbol{\beta}$ adhere to the following assumptions:
\begin{align}
                          0 < \nu_0 < \nu(x) < \nu_1, \quad 
	\label{A1}	\sigma(\x) -\frac{1}{2} \nabla \cdot \boldsymbol{\beta}(x) &\ge \kappa, \quad \forall \,\x \in \Omega, 
	\quad \|\sigma - \nabla \cdot \boldsymbol{\beta}\|_{L^{\infty}(\Omega)} \le \kappa \xi,
\end{align}
where $\nu_0$, $\nu_1$, $\kappa$ and $\xi$ are positive constants. 
\subsection{Literature Review}
The equations described above are relevant in several situations~\cite{VANAYA, JVK, PLE, RJS}, such as:
\begin{itemize}
\item The linearization of non-Newtonian flow problems, where the viscosity is not constant but depends on the flow rate.
 \item Applications in various fields where the viscosity of a fluid can vary depending on factors like temperature, concentration, or the presence of different materials within the fluid.
\item Scenarios where the patterns of fluid flow are significantly influenced by the spatial distribution of the viscosity within the fluid.
\end{itemize}
In the context of optimal control of Oseen equation, there are very few result avialable. Moreover, these results only focus for Oseen equation with constant viscosity. Braack et al. \cite{braack2009optimal} introduced stabilized FEM for linear quadratic optimal control problems involving Oseen equations and derived a priori estimates. Subsequently, Allendes et al. \cite{AAE} established globally reliable a posteriori error estimates for the same. As far as we are aware there is no result for optimal control with variable viscosity. 
However, discontinuous approximations are often preferred to preserve physically meaningful properties, especially when dealing with models characterized by uneven coefficients and anticipating sharp solutions. Upon comprehensive review of the existing literature, it is noteworthy that the utilization of Discontinuous Galerkin Finite Element Methods (DGFEM) appears to be underexplored in the context of optimal control problems.  
The utilization of adaptive DGFEM for addressing optimal control problems governed by convection-diffusion equations is explored in detail in \cite{yucel2015adaptive}. Dond et al. \cite{ATR} investigated discontinuous finite element methods for optimal control problems governed by the Stokes equation and provided a comprehensive convergence analysis, considering minimal regularity assumptions.
To exploit the conservative properties of Finite Volume Methods (FVMs) and the advantages of DG methods, authors in \cite{LXC, SRK} employed DGFVM for approximating elliptic optimal control problems with a primal-dual active set strategy established in \cite{PDS}. Later, this analysis was extended to the optimal control problem governed by Brinkman equations \cite{SRR}.

The divergence-conforming discontinuous Galerkin method, pioneered by Cockburn,  Kanschat, and Sch$\ddot{o}$tzau \cite{cockburn2007note}, has emerged as a powerful tool for tackling challenging fluid dynamics problems. It's core strength lies in ensuring the crucial property of divergence-free velocity fields, a critical attribute for incompressible flows. This leads to superior accuracy and stability compared to traditional DG approaches. However, the classical divergence-conforming methods often rely on penalty terms that can introduce stiffness and limit their applicability to complex geometries. Subsequently, Khan et al. \cite{AGO} proposed the new robust a posteriori error estimator for Oseen equation. 

\subsection{Main contribution:} 
This study pioneers the application of the divergence-conforming DG method to optimal control problems governed by the Oseen equation with variable viscosity, marking uncharted territory in this field. We are the first to explore this approach in both two and three dimensions. 

\begin{itemize}
\item{\textbf{A priori analysis:}} 
The primary contributions of this work include establishing the well-posedness of the discrete problem and deriving optimal a priori error estimates for the control, state, and adjoint variables within their respective inherent norms. Furthermore, we conduct a comprehensive examination of $L^2$ error estimates for scenarios dominated by both diffusion and convection.
\item{\textbf{A posteriori analysis:}} 
Our second major contribution lies in establishing a foundational framework for this innovative direction by deriving the new a posteriori error estimates. We demonstrate the effectiveness of our estimator through rigorous theoretical analysis and practical error estimation techniques. These findings open up avenues for further advancements and applications in optimal control problems involving variable-viscosity fluid flow. Specifically, in scenarios where the convective velocity $\beta$, coefficient $\sigma$, and domain $\Omega$ are all of order one, and the viscosity $\nu$ is constant, the inverse of viscosity $\nu^{-1}$ serves as the Reynolds number. Furthermore, our estimator showcases robustness concerning the Reynolds number $\nu^{-1}$.
\item{\textbf{Unified analysis:}} 
We offer a comprehensive analysis of optimal control problems governed by Stokes ($\boldsymbol{\beta}=0$, $\sigma=0$), Brinkman ($\boldsymbol{\beta}=0$), Oseen ($\sigma = 0$), and generalized Oseen equations with variable viscosity by employing divergence-conforming Discontinuous Galerkin method.
\end{itemize}
\subsection{Outline of paper} The remainder of the document is structured as follows: In Section \ref{Preliminaries and continuous formulation.}, we present notations and examine the existence and uniqueness of the continuous formulation and derive the necessary and sufficient optimality conditions  for the model problem. Section \ref{Discrete Formulation.} introduces the divergence-conforming DG discrete formulation for the control problem. Sections \ref{PRIORI ERROR ESTIMATES.} and \ref{A POSTERIORI ERROR ESTIMATES.} are dedicated to deriving a priori and a posteriori error estimates, respectively. Lastly, Section \ref{Numerical Experiments.} provides numerical examples to validate the theoretical results.
\section{Preliminaries and continuous formulation} \label{Preliminaries and continuous formulation.} \setcounter{equation}{0}
\subsection{Function spaces.} 
The notation $W^{k,p}(\Omega)$ represents the standard Sobolev spaces applicable to scalar-valued functions, characterized by norms
$\|\cdot\|_{W^{k,p}(\Omega)}$, where $k$ is non-negative integer and  $p$ lies in the range $1$ to $\infty$.
When $k=0$, we denote  $W^{0,p}(\Omega)=L^p(\Omega)$ with norm $\|\cdot\|_{p}$. Additionally, when $p$ is equal to $2$, we express  
$W^{k,2}(\Omega)=H^{k}(\Omega)$. 
Bold letters are employed to represent vector-valued counterparts of the aforementioned spaces. 
Finally, we present the spaces required for our analysis in the following manner:
\begin{align*}
\nonumber L_{0}^{2}(\Omega) &:= \biggl\{u \in L^2(\Omega) \ | \int_{\Omega} u \ dx = 0 \biggl\},\;
\nonumber &&\boldsymbol{H}_{0}^{1}(\Omega) := \{\mathbf{u} \in \boldsymbol{H}^{1}(\Omega) \ | \  \mathbf{u}|_{\Gamma} = 0 \},\\
\nonumber \boldsymbol{H}^{\text{div}}(\Omega) &:= \{\mathbf{u} \in \boldsymbol{L}^2(\Omega) \ | \ \nabla\cdot\mathbf{u} \in L^2(\Omega) \},\;
\nonumber &&\boldsymbol{H}_{0}^{\text{div}}(\Omega) := \{\mathbf{u} \in \boldsymbol{H}^{\text{div}}(\Omega) \ | \ \mathbf{u}\cdot\mathbf{n} = 0 \ \text{on} \ \partial \Omega \}.
\end{align*} 
	 The notation $m \precsim n$ means that there exists a positive constant $C$ independent of $m$ and $n$ such that $m \le C n.$
 For simplicity, we write 
$\boldsymbol{V} = \boldsymbol{H}_{0}^{1}(\Omega)$ and $Q = L_{0}^{2}(\Omega)$. 

\subsection{Continuous formulation}
Let's delve into the continuous representation of the optimal control problem expressed in equations (\ref{1.1}-\ref{1.3}) and establish an optimality system. Our goal is to find $(\mathbf{y}, p) \in \boldsymbol{V} \times Q$ that meets the weak formulation of the state problem (\ref{1.2}), where:
\begin{align}\label{2.1}
	&\begin{cases} 
		a(\mathbf{y},\mathbf{v}) + b(\mathbf{v},p) &= (\mathbf{f} + \mathbf{u}, \mathbf{v}) \hspace{3.86cm}  \forall \ \mathbf{v} \in \boldsymbol{V}, \\
		\hspace{1.47cm}b(\mathbf{y},\phi) &= 0 \hspace{5.15cm}   \forall \ \phi \in Q,
	\end{cases}
\end{align}
where the bilinear forms  $a : \boldsymbol{V} \times \boldsymbol{V} \rightarrow \R$ and $ b : \boldsymbol{V} \times Q \rightarrow \R$ are defined by
\begin{align}
\label{2.2} a(\mathbf{y},\mathbf{v}) &:= (\nu \nabla \mathbf{y},\nabla \mathbf{v}) + \frac{1}{2}((\boldsymbol{\beta} \cdot \nabla)\y,\v) - \frac{1}{2}((\boldsymbol{\beta} \cdot \nabla)\v,\y) + \bigg(\Big(\sigma - \frac{1}{2} \nabla \cdot \boldsymbol{\beta}\Big) \y,\v \bigg), \\
\label{2.3} b(\mathbf{y},\phi) &:= -(\phi, \nabla \cdot \mathbf{v}).
\end{align}
The norms on $\boldsymbol{V}$ and $Q$ are represented as $|\!|\!|\cdot|\!|\!|_{\boldsymbol{V}}$ and $|\!|\!|\cdot|\!|\!|_{Q}$, respectively. For any $\mathbf{v} \in \boldsymbol{V}$ and $\phi \in Q$, we express this as follows:
$$|\!|\!|\mathbf{v}|\!|\!|^{2} :=  \|\nu^{1/2}\nabla \mathbf{v}\|_{ {\boldsymbol{L}}^{2}(\Omega)}^2 + \kappa \|\mathbf{v}\|_{\boldsymbol{L}^{2}(\Omega)}^2  \ \ \ \ \text{and} \ \ \  \ |\!|\!|\phi|\!|\!|_{Q}^2 := \|\nu^{-1/2}\phi\|_{L^2(\Omega)}^2.$$
The well-posedness of the state problem (\ref{2.1}) has been established in previous works \cite{TPF, GFB}. By applying de Rham's Theorem \cite[Section 4.1.3 and Theorem ~ B73]{TPF}, the equivalent formulation of (\ref{2.1}) is stated as follows: find $\mathbf{y} \in \boldsymbol{V}_{0}$ such that
\begin{align}\label{2.7} 
	a(\mathbf{y},\mathbf{v}) &= (\mathbf{f} + \mathbf{u}, \mathbf{v}) \hspace{1cm}  \forall  \ \mathbf{v} \in \boldsymbol{V}_{0},
\end{align}
where \ $\boldsymbol{V}_{0} := \{\mathbf{v} \in \boldsymbol{V}: \nabla \cdot \mathbf{v} = 0\}.$ 
	The \textbf{admissible set} of controls is defined as 
	\begin{align*}
			\boldsymbol{U}_{ad} := \{\mathbf{u} \in \boldsymbol{L}^{2}(\Omega) \ | \ \mathbf{u}_a(\x) \le \mathbf{u}(\x) \le \mathbf{u}_b(\x)  \ \text{for} \ a.e. \  \x \in \Omega \},
		\end{align*}
		where $\mathbf{u}_a, \mathbf{u}_b \in \R^d$, with $\mathbf{u}_a < \mathbf{u}_b$ considered componentwise.
   The set $\boldsymbol{U}_{ad}$ is a nonempty, bounded, convex, closed subset of the reflexive Banach space $\boldsymbol{L}^{2}(\Omega)$, and it is weakly sequentially compact as proven in \cite[Theorem~ 2.11]{FTO}. To analyze the optimal control problem, we adopt the notations and results presented in \cite{FTO}. The control-to-state map $S : \boldsymbol{L}^{2}(\Omega) \rightarrow \boldsymbol{L}^{2}(\Omega)$ associates the state $\mathbf{y}$ with a given control $\mathbf{u} \in \boldsymbol{U}_{ad}$.
We introduce the reduced functional $F:\boldsymbol{L}^{2}(\Omega) \rightarrow \R$ as follows:
\begin{align}\label{2.8}
	F(\mathbf{u}) := \frac{1}{2} \|S(\mathbf{u})-\mathbf{y}_d\|_{\boldsymbol{L}^{2}(\Omega)}^2 +\frac{\lambda }{2}  \|\mathbf{u}\|_{\boldsymbol{L}^{2}(\Omega)}^2.
\end{align}
The functional $F$ defined over a Banach space satisfies the conditions of continuity and convexity. These characteristics lead to the weakly lower semicontinuity of $F$ as established in \cite[Theorem~ 2.12]{FTO}. Furthermore, $F$ is strictly convex for $\lambda >0$.
As a consequence of the above properties, the problem 
\begin{align}\label{2.9}
	\underset{\mathbf{u} \in \boldsymbol{U}_{ad}}{\min} F(\u)\quad \mbox{subject to (\ref{2.7})} ,
\end{align} has an unique optimal solution $\widetilde{\mathbf{u}}$ \cite[Theorem~2.14]{FTO} and corresponding optimal state $\widetilde{\mathbf{y}}$ that satisfies (\ref{2.7}), or equivalently (\ref{2.1}). 
The existence of a pressure state $\widetilde{p}$ that satisfies equation (\ref{2.1}) is guaranteed by de Rham's Theorem. Leveraging the Gateaux differentiability of $F$, the optimal solution $\widetilde{\mathbf{u}}$ of problem (\ref{2.9}) meets the first-order necessary optimality condition, which is also known as a variational inequality:
\begin{align}\label{2.10}
	F'(\widetilde{\mathbf{u}})(\mathbf{u}-\widetilde{\mathbf{u}}) \ge 0 \quad \forall \ \mathbf{u} \in \boldsymbol{U}_{ad},
\end{align}
 proved in \cite[Lemma~2.21]{FTO}.  By finding Fréchet derivative of the reduced functional, the variational inequality becomes
 	\begin{align}\label{2.10n}
 		\big(\boldsymbol{S}^{*}(\boldsymbol{S} \widetilde{\u}-\y_d)+ \lambda \widetilde{\u},\mathbf{u}-\tilde{\mathbf{u}}\big) \ge 0 \quad \forall \ \mathbf{u} \ \in \boldsymbol{U}_{ad},
 	\end{align}
  by \cite[Theorem~2.22]{FTO}, where $\boldsymbol{S}^{*}$ denotes the adjoint operator. The variational inequality (\ref{2.10}) is further reduced to 
 \begin{align}\label{2.11}
 	(\widetilde{\mathbf{w}} + \lambda \widetilde{\mathbf{u}},\mathbf{u}-\widetilde{\mathbf{u}}) \ge 0 \quad \forall \ \mathbf{u} \ \in \boldsymbol{U}_{ad},
 \end{align}
\cite[Theorem~2.25]{FTO}, where $\widetilde{\mathbf{w}}$ is the unique solution to the adjoint problem: find $(\mathbf{w},r) \in \boldsymbol{V} \times Q$ such that
\begin{align}\label{2.12}
	&\begin{cases} 
		a(\mathbf{z},\mathbf{w}) - b(\mathbf{z},r) &= (\mathbf{y} - \mathbf{y}_d , \mathbf{z})_{\boldsymbol{L}^{2}(\Omega)} \hspace{2.89cm}  \quad \forall  \ \mathbf{z} \in \boldsymbol{V}, \\
		\hspace{1.45cm}b(\mathbf{w},\psi) &= 0 \hspace{5.15cm}  \quad \forall \ \psi \in Q.
	\end{cases}
\end{align}
From the equations (\ref{2.1}), (\ref{2.11}-\ref{2.12}), the optimality system can be expressed as follows: 

An optimal solution for the optimal control problem (\ref{1.1}-\ref{1.3}) is characterized by $(\mathbf{y}, p, \mathbf{u}) \in \boldsymbol{V} \times Q \times \boldsymbol{U}_{ad}$ if and only if $(\mathbf{y}, p, \w, r, \mathbf{u}) \in \boldsymbol{V} \times Q \times \boldsymbol{V} \times Q \times \boldsymbol{U}_{ad}$ satisfies the system.
\begin{align} 
	\label{2.13a} a(\mathbf{y},\mathbf{v}) + b(\mathbf{v},p) &= ( \f + \u, \mathbf{v} ) \hspace{4.6cm}  &&\forall \ \mathbf{v} \in  \boldsymbol{V}, \\
\label{2.13b} 	\hspace{1.57cm}b(\y,\phi) &= 0 \hspace{5.89cm} \  &&\forall \ \phi \in Q,\\
\label{2.13c} 	a(\z,\w) - b(\mathbf{z},r) &= ( \y - \mathbf{y}_d , \mathbf{z}) \    \ \hspace{4.2cm} &&\forall \ \mathbf{z} \in  \boldsymbol{V}, \\
\label{2.13d}	\hspace{1.56cm}b(\w,\psi) &= 0 \   \hspace{5.95cm}  &&\forall \ \psi \in Q,\\
\label{2.13e} 	(\w  + \lambda \u,\overline{\u}-\u) &\ge 0 \hspace{6.1cm} &&\forall \ \overline{\u} \ \in \boldsymbol{U}_{ad}.
\end{align} 
Utilizing the optimal control variable's projection formula \cite[Theorem 2.28]{FTO}, the variational inequality (\ref{2.13e}) in the optimality system can be expressed equivalently as:
\begin{align}\label{2.14} 
	\u = \Pi_{[\mathbf{u}_a,\mathbf{u}_b]} \bigg(-\frac{\w}{\lambda}\bigg) \ \ \ a.e. \ \text{in} \ \Omega,
\end{align}
where 
$\Pi$ represents a projection defined as:
\[\Pi_{[\mathbf{u}_a,\mathbf{u}_b]}(\mathbf{v})(\x) := \min \bigl\{\mathbf{u}_b(\x), \max \{\mathbf{u}_a(\x),\mathbf{v}(\x)\} \bigr\}.\]
\subsection{Second-order sufficient optimality conditions}
In conducting a numerical analysis of the problem and evaluating optimization algorithms, we establish the second-order sufficient optimality conditions (\textbf{SSC}) by referencing \cite{FTSS}. 
Prior to delving into the primary outcome, we lay the groundwork with several key definitions.
\begin{definition}
       A control $\tilde{\u} \in \boldsymbol{U}_{ad}$ is deemed \textbf{locally optimal} in $\boldsymbol{L}^{2}(\Omega)$ if there exists a positive constant $\epsilon$ such that the inequality
	\begin{align*}
		J(\tilde{\y},\tilde{\u}) \le J(\y,\u) 
	\end{align*}
holds for all $\u \in \boldsymbol{U}_{ad}$ with $|\tilde{\u}-\u|_{0}^{2} \le \epsilon$. In this context, $\tilde{\y}$ and $\y$ represent the corresponding states associated with the controls $\tilde{\u}$ and $\u$, respectively.
\end{definition}
\begin{definition}
	A pair $(\tilde{\y},\tilde{\u}) \in \boldsymbol{V} \times \boldsymbol{U}_{ad}$ is considered a \textbf{globally optimal solution} to the optimal control problem (\ref{1.1}-\ref{1.3}) if
	\begin{align*}
		J(\tilde{\y},\tilde{\u}) = \underset{(\y,\u) \in \boldsymbol{V} \times \boldsymbol{U}_{ad}}{\min} J(\y,\u).
	\end{align*}
\end{definition}
{
\noindent For $\boldsymbol{X} := \boldsymbol{V} \times Q$, we define $\boldsymbol{\mathcal{A}} : \boldsymbol{X} \times \boldsymbol{X} \rightarrow \R$ as 
\begin{align}
\nonumber	\boldsymbol{\mathcal{A}}(\y,\z) &= (\nu \nabla \mathbf{y}^{v},\nabla \z^{v}) + \frac{1}{2}((\boldsymbol{\beta} \cdot \nabla)\y^{v},\z^{v}) - \frac{1}{2}((\boldsymbol{\beta} \cdot \nabla)\z^{v},\y^{v})+ \bigg(\Big(\sigma - \frac{1}{2} \nabla \cdot \boldsymbol{\beta}\Big) \y^{v},\z^{v} \bigg)\\
\label{2.18} & \ \ \ \ - (\y^{p}, \nabla \cdot \z^{v}) + (\z^{p},\nabla \cdot \y^{v}),
\end{align}
\noindent where $\z = (\z^v,\z^p), \ \y = (\y^{v},\y^{p}) \in \boldsymbol{X}$.} 
We define the Lagrange function denoted by $\mathcal{L}: \boldsymbol{X} \times \boldsymbol{L}^{2}(\Omega) \times \boldsymbol{X} \rightarrow \R$ , for the optimal control problem as 
\begin{align}\label{2.16} 
	\mathcal{L}(\y,\u,\z) = \ &J(\y,\u) - \mathcal{A}(\y,\z) + (\f+\u,\z^{v}).
\end{align}
This function is twice Fr\'echet-differentiable, as demonstrated in Lemma-\ref{lem: 2.4.}, with respect to both $\y$ and $\u$. 
The expression of the first-order necessary conditions can also be articulated as:
\begin{alignat*}{2}
	\mathcal{L}_{\y}(\tilde{\y},\tilde{\u},\tilde{\z}) \s &= 0 \quad && \forall \ \s \in \boldsymbol{V},\\
	\mathcal{L}_{\u}(\tilde{\y},\tilde{\u},\tilde{\z}) (\u-\tilde{\u}) &\ge 0 \quad && \forall \ \u \in \boldsymbol{U}_{ad},
\end{alignat*}
where $\mathcal{L}_{\y}, \mathcal{L}_{\u}$ denote the partial Fr\'echet-derivative of $\mathcal{L}$ w.r.t. $\y$, $\u$, respectively.
\begin{lemma}\label{lem: 2.4.}
        The Lagrangian $\mathcal{L}$, as defined in (\ref{2.16}),  is twice Fr\'echet-differentiable with respect to $\v=(\y,\u)$.         
        The second-order derivative at $\v = (\tilde{\y},\tilde{\u})$ in conjunction with the associated adjoint state $\tilde{\z}$ satisfies the following condition:
	\begin{align}
	\label{2.17a} 	\mathcal{L}_{\v \v} (\tilde{\v},\tilde{\z})[(\s_1,\t_1),(\s_2,\t_2)] &= \mathcal{L}_{\u \u} (\tilde{\v},\tilde{\z})[\t_1,\t_2] + \mathcal{L}_{\y \y} (\tilde{\v},\tilde{\z})[\s_1,\s_2],\\
	\label{2.17b} 	|\mathcal{L}_{\y \y} (\tilde{\v},\tilde{\z})[\s_1,\s_2]| &\le C_{\mathcal{L}} |\s_1| |\s_2|
	\end{align}
for all $(\s_i,\t_i) \in \boldsymbol{V} \times \boldsymbol{L}^{2}(\Omega)$ where $i=1,2$ , with some positive constant $C_{\mathcal{L}}$ independent of $\tilde{\v},\s_1$ and $\s_2.$
\end{lemma}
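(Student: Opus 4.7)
The approach is direct: since $\mathcal{L}$ is a polynomial (of total degree two) in $(\y,\u)$, Fréchet differentiability and the explicit form of the derivatives follow by inspection, and the key algebraic observation is that there is no $\y\u$-cross term.

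First, I would decompose $\mathcal{L}$ according to its dependence on $\v=(\y,\u)$. Writing out
\begin{align*}
\mathcal{L}(\y,\u,\z) = \tfrac{1}{2}\|\y^{v}-\y_d\|_{\boldsymbol{L}^{2}(\Omega)}^{2} + \tfrac{\lambda}{2}\|\u\|_{\boldsymbol{L}^{2}(\Omega)}^{2} - \mathcal{A}(\y,\z) + (\f+\u,\z^{v}),
\end{align*}
I observe that $\mathcal{A}(\y,\z)$ is linear in $\y$ (with $\z$ fixed), that $(\f+\u,\z^{v})$ is linear in $\u$, and that the only nonlinear (quadratic) pieces are the two squared $L^2$-norms, one depending solely on $\y$ and the other solely on $\u$. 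Hence $\mathcal{L}$ contains no term in which $\y$ and $\u$ appear simultaneously.

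Second, I would compute the first-order partial Fréchet derivatives at $(\tilde{\y},\tilde{\u})$ by forming the Gateaux derivatives in directions $\s\in\boldsymbol{V}$ and $\t\in\boldsymbol{L}^{2}(\Omega)$ and verifying the standard continuity/linearity, obtaining
\begin{align*}
\mathcal{L}_{\y}(\tilde{\v},\tilde{\z})\s &= (\tilde{\y}^{v}-\y_d,\s^{v})_{\boldsymbol{L}^{2}(\Omega)} - \mathcal{A}(\s,\tilde{\z}), \\
\mathcal{L}_{\u}(\tilde{\v},\tilde{\z})\t &= \lambda(\tilde{\u},\t)_{\boldsymbol{L}^{2}(\Omega)} + (\t,\tilde{\z}^{v})_{\boldsymbol{L}^{2}(\Omega)}.
\end{align*}
Differentiating once more, the linear-in-$\y$ and linear-in-$\u$ terms drop out and I am left with
\begin{align*}
\mathcal{L}_{\y\y}(\tilde{\v},\tilde{\z})[\s_1,\s_2] &= (\s_1^{v},\s_2^{v})_{\boldsymbol{L}^{2}(\Omega)}, \quad
\mathcal{L}_{\u\u}(\tilde{\v},\tilde{\z})[\t_1,\t_2] = \lambda(\t_1,\t_2)_{\boldsymbol{L}^{2}(\Omega)}, \\
\mathcal{L}_{\y\u}(\tilde{\v},\tilde{\z})[\s_1,\t_2] &= 0, \quad \mathcal{L}_{\u\y}(\tilde{\v},\tilde{\z})[\t_1,\s_2] = 0,
\end{align*}
the last two equalities being precisely the consequence of having no mixed $\y\u$-term in $\mathcal{L}$. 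Expanding the full bilinear form $\mathcal{L}_{\v\v}$ into its four blocks and using the vanishing of the off-diagonal blocks yields the splitting \eqref{2.17a}.

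Finally, for \eqref{2.17b} I would simply apply the Cauchy--Schwarz inequality to the $L^2$-inner product appearing in $\mathcal{L}_{\y\y}$, giving
\begin{align*}
|\mathcal{L}_{\y\y}(\tilde{\v},\tilde{\z})[\s_1,\s_2]| = |(\s_1^{v},\s_2^{v})_{\boldsymbol{L}^{2}(\Omega)}| \le \|\s_1^{v}\|_{\boldsymbol{L}^{2}(\Omega)} \|\s_2^{v}\|_{\boldsymbol{L}^{2}(\Omega)},
\end{align*}
from which the desired bound with a constant $C_{\mathcal{L}}$ independent of $\tilde{\v},\s_1,\s_2$ follows. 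No real obstacle arises here — the entire argument is a careful bookkeeping of the quadratic structure of $J$ plus the bilinearity of $\mathcal{A}$; the only point requiring attention is to confirm Fréchet (rather than merely Gateaux) differentiability, which is immediate because the remainder in the Taylor expansion is exactly the quadratic form $\mathcal{L}_{\v\v}$ itself and hence is $o(\|(\s,\t)\|)$ trivially in the quadratic setting.
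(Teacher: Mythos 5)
Your proposal is correct and follows essentially the same route as the paper: compute the first-order partial derivatives, observe that they are affine in $(\y,\u)$ with no mixed $\y\u$-term so that the second derivative splits into the two diagonal blocks $\lambda(\t_1,\t_2)+(\s_1,\s_2)$, and conclude \eqref{2.17b} by Cauchy--Schwarz. Your write-up is slightly more explicit than the paper's (you spell out the quadratic decomposition of $\mathcal{L}$ and the exactness of the Taylor remainder), but the argument is the same.
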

\begin{proof}
	The first order derivatives of $\mathcal{L}$ w.r.t. $\y$ and $\u$ are
	\begin{align*}
		\mathcal{L}_{\y} (\tilde{\v},\tilde{\z}) \s &= (\s,\tilde{\y}-\y_{d}) - \mathcal{A}(\s,\tilde{\z}),\\
		\mathcal{L}_{\u} (\tilde{\v},\tilde{\w}) \t &= \lambda (\t,\tilde{\u}) + (\u,\tilde{\z}).
	\end{align*}
	The mappings $\tilde{\y} \mapsto \mathcal{L}_{\y}(\tilde{\v}, \tilde{\w})$ and $\tilde{\u} \mapsto \mathcal{L}_{\u}(\tilde{\w}, \tilde{\z})$ exhibit an affine linear structure with bounded linear components, ensuring continuity. Consequently, both mappings are Fréchet-differentiable. This observation establishes that $\mathcal{L}$ is twice Fréchet-differentiable. The second-order derivative of $\mathcal{L}$ with respect to $\v$ is then expressed as:
\begin{align*}
	\mathcal{L}_{\v \v}(\tilde{\v}, \tilde{\w})[(\s_1,\t_1),(\s_2,\t_2)]  &= \mathcal{L}_{\u \u}(\tilde{\v}, \tilde{\w})[\t_1,\t_2]  + \mathcal{L}_{\y \y}(\tilde{\v}, \tilde{\w})[\s_1,\s_2] \\
	&= \lambda (\t_1,\t_2) + (\s_1,\s_2).
\end{align*}
Given the absence of mixed derivatives, the second estimate (\ref{2.17b}) is derived through the application of the Cauchy-Schwarz inequality.
\end{proof}
To shorten the notations, we abbreviate $[\v,\v]$ by $[\v]^{2}$, $i.e.$
\begin{align*}
	\mathcal{L}_{\v \v}({\v}, {\w})[\s,\t]^{2} = \mathcal{L}_{\v \v}({\v}, {\w})[(\s,\t),(\s,\t)],
\end{align*}
\begin{definition}
	For fixed $\epsilon>0$ and all $i=1,\ldots,d$, we define the \textbf{strongly active sets} $\Omega_{\epsilon,i}$ as 
	\begin{align*}
		\Omega_{\epsilon,i} = \{ x \in \Omega: |\lambda \tilde{u}_{i}(x) + \tilde{w}_{i}(x)| > \epsilon\}.
	\end{align*}
\end{definition}
\noindent 
Let's consider the optimal pair $\tilde{\v} = (\tilde{\y}, \tilde{\u})$ and the corresponding co-state $\tilde{\w}$. Assume that the following condition on $\mathcal{L}_{\v\v}(\tilde{\v}, \tilde{\w})$ holds, referred to as the \textbf{second-order sufficient optimality condition (SSC)}:
There exists $\epsilon>0$ and $\delta>0$ such that
{\begin{align}\label{2.18n} 
	\mathcal{L}_{\v \v}(\tilde{\v},\tilde{\w})[(\s,\mathbf{t})]^2 \geq \delta \|\mathbf{t}\|_{0}^2
\end{align}}
holds for all pairs $(\s, \mathbf{t}) \in \boldsymbol{V} \times \boldsymbol{L}^{2}(\Omega)$ with
$$\mathbf{t} = \u - \tilde{\u}, \quad \u \in \boldsymbol{U}_{ad}, \quad t_i = 0 \text{ on } \Omega_{\epsilon,i} \text{ for } i = 1, \ldots, d,$$
$\s \in \boldsymbol{V}$ is the weak solution of the equation 
\begin{align*}
	a(\z,\s) = (\mathbf{t},\z) \quad \forall \ \z \in \boldsymbol{V}.
\end{align*}
\begin{remark}
The definition of $\t$ indicates that $\t(x) \ge 0$ when $\tilde{\u} = \u_a$ and $\t(x) \le 0$ when $\tilde{\u} = \u_b$. It is important to note that the condition $\epsilon > 0$ cannot be eased to $\epsilon = 0$, as demonstrated in the counterexample provided in \cite{DND}. Additionally, the presence of a penalty term is crucial for the expectation of second-order sufficient conditions; this aligns with the scenario where $\lambda = 0$.
\end{remark}
\noindent 
The conclusion that the combination of the \textbf{second-order sufficient optimality condition (SSC)} and the \textbf{first-order necessary optimality conditions} is adequate for establishing the local optimality of $(\tilde{\y}, \tilde{\u})$ is derived in \cite[Theorem~ 3.17]{FTSS}.
\section{Discrete Formulation} \label{Discrete Formulation.} \setcounter{equation}{0}
\subsection{Discretization, Finite element spaces and traces} 
Firstly, we introduce the notations related to the discretization of the domain $\Omega$. Let $\mathcal{T}_h = {K}$ be a shape-regular partition of $\overline{\Omega}$ into closed triangles (or tetrahedra if $d=3$) following the criteria in \cite{MLF}, such that $\bigcup_{K \in \mathcal{T}_h} K = \overline{\Omega}.$ The global mesh size is denoted as $h = \max \{h_K : K \in \mathcal{T}_h \}$, where $h_K$ represents the diameter of an element $K$. The sets $\mathcal{E}^{i}(\mathcal{T}_h)$, $\mathcal{E}^{b}(\mathcal{T}_h)$, and $\mathcal{E}(\mathcal{T}_h)$ consist of interior edges, boundary edges, and all edges of $\mathcal{T}_h$, respectively. Additionally, let $h_e$ and $\boldsymbol{n}_e$ denote the length and unit exterior normal vector of an edge $e$. The integrals over meshes and faces are defined as:
\begin{align*}
	(f,g)_{\mathcal{T}_h} &:= \sum_{K \in \mathcal{T}_h}(f,g)_K, \ \text{and} \ \   \langle f,g \rangle_{\mathcal{E}(\mathcal{T}_h)} := \sum_{E \in \mathcal{E}(\mathcal{T}_h)} \langle  f,g \rangle_E.
\end{align*}
The norms on $\mathcal{T}_h$ and $\mathcal{E}(\mathcal{T}_h)$ are defined as:
$$\|f\|_{\mathcal{T}_h} := \sqrt{(f,f)_{\mathcal{T}_h}}, \ \ \ \ \text{and} \ \ \ \ \|f\|_{\mathcal{E}(\mathcal{T}_h)} := \sqrt{\langle f,f \rangle _{\mathcal{E}(\mathcal{T}_h)}}.$$
For $s \ge 0$, we define the broken Sobolev spaces as
$$H^{s}(\mathcal{T}_h) = \{v \in L^2(\Omega) : v|_K \in H^{s}(K) \ \text{for all} \ K \in \mathcal{T}_h\}.$$
Consider two triangles $K^{+}$ and $K^{-}$ in the mesh that share an edge $E \in \mathcal{E}(\mathcal{T}_h)$. Suppose $u \in H^{1}(\mathcal{T}_h)$ has traces $u^{+}$ and $u^{-}$ on $E$ from the two elements $K^{+}$ and $K^{-}$, respectively. In this context, we define the average operator $\{\!\!\{\cdot\}\!\!\}$ as 
$	\{\!\!\{u\}\!\!\} := \frac{u^{+}+u^{-}}{2}$.
Let $\mathbf{n}^{+}$ and $\mathbf{n}^{+}$ be be the outward unit normal vectors to $K^{+}$ and $K^{-}$, respectively. 
The jumps $[\![ \cdot ]\!]$ across the edge $E$ are defined as:
\begin{align*}
[\![u \mathbf{n}]\!] := u^{+} \mathbf{n}^{+} + u^{-} \mathbf{n}^{-} := (u^{+}-u^{-})\mathbf{n^{+}}, \ \ \ 
[\![u \otimes \mathbf{n}]\!] = u^{+} \otimes \mathbf{n}^{+} + u^{-} \otimes \mathbf{n}^{-}, \ \ \
[\![u]\!] = (u^{+}-u^{-}).
\end{align*}
For an edge $e$ lying on the boundary $\Gamma$, we have $[\![\mathbf{u}]\!]_{e} = \{\!\{\mathbf{u}\}\!\}_{e} = \mathbf{u}.$
We represent the inflow and outflow parts of the boundary $\Gamma$ as $\Gamma_{\text{in}}$ and $\Gamma_{\text{out}}$, respectively, where
$$\Gamma_{\text{in}} = \{\mathbf{x} \in \Gamma : \boldsymbol{\beta} \cdot \n < 0\}, \quad \Gamma_{\text{out}} = \{\mathbf{x} \in \Gamma : \boldsymbol{\beta} \cdot \n \ge 0\}.$$
Likewise, the inflow and outflow parts of the boundary of an element $K$ are referred to as $\partial K_{\text{in}}$ and $\partial K_{\text{out}}$, respectively, where
$$\partial K_{\text{in}} = \{\mathbf{x} \in \partial K : \boldsymbol{\beta} \cdot \n < 0\}, \quad \partial K_{\text{out}} = \{\mathbf{x} \in \partial K : \boldsymbol{\beta} \cdot \n \ge 0\}.$$
Let $\boldsymbol{V}_h$ be a discrete subspace of $\boldsymbol{H}_{0}^{\text{div}}(\Omega)$ (used to approximate state and adjoint velocity) such that 
$$\boldsymbol{V}_h = \{\v \in \boldsymbol{H}_{0}^{\text{div}}(\Omega) \ | \ \v|_K \in \ \text{BDM}_k \ \text{for all} \ K \in  \mathcal{T}_h \ \text{and} \ k \ge 1\},$$ and 
$$\boldsymbol{V}_h^{0} = \{\mathbf{v} \in \boldsymbol{V}_{h} \ | \ \nabla\cdot\mathbf{v} = 0\},$$ 
where $\text{BDM}_k(K) = [\mathcal{P}_k(K)]^2$ represents the Brezzi-Douglas-Marini spaces of order $k$. In this context, $\mathcal{P}_{k}(K)$ signifies the space of polynomials with a maximum degree of $k$ defined on $K \in \mathcal{T}_h$, and  $\boldsymbol{P}_{k}(K)$ refers to its vector-valued equivalent.\\
The analogous discrete subspace $Q_h$ of $L_{0}^2(\Omega)$ (utilized for approximating state and adjoint pressure) is
$$Q_h = \{v \in L_{0}^2(\Omega) \ | \ v|_K \in \mathcal{P}_k(K) \ \text{for all} \ K \in  \mathcal{T}_h \ \text{and} \ k \ge 0\}.$$
A crucial characteristic of the selected discrete spaces is that
$$\nabla \cdot \boldsymbol{V}_{h}\subset Q_h.$$
Regarding the control variable, we consider $\boldsymbol{U}_{ad,h}$ to be a nonempty, closed, and convex discrete subspace of $ \boldsymbol{U}_{ad}$. Employing a piecewise constant discretization, we define the discrete control space as:
\begin{align*}
	\boldsymbol{U}_{ad,h}=\{\u_h \in \boldsymbol{L}^2(\Omega) \ | \ \u_h|_{K} \in \boldsymbol{P}_{0}(K) \ \forall \ K \in \mathcal{T}_h \}.
\end{align*}
\subsection{$\boldsymbol{H}^{\text{div}}$-DG formulation for the Oseen problem with control} 

The discrete divergence-conforming DGFEM weak formulation for the optimality system (2.12) is : find $(\y_h, p_h, \w_h, r_h, \mathbf{u}_h) \in  \boldsymbol{V}_h \times Q_h \times  \boldsymbol{V}_h \times Q_h \times \boldsymbol{U}_{ad,h}$ such that
\begin{subequations}
\begin{align}
	\label{3.1a} a_h(\mathbf{y}_h,\mathbf{v}_h) + O_h(\boldsymbol{\beta};\y_h,\mathbf{v}_h) + b_h(\mathbf{v}_h,p_h) &= (\mathbf{f} + \u_h, \mathbf{v}_h), \\
 	\label{3.1b} \hspace{1.57cm}b_h(\y_h,\phi_h) &= 0,\\
 	\label{3.1c} a_h(\mathbf{z}_h,\w_h) + O_h(\boldsymbol{\beta};\z_h,\mathbf{w}_h) - b_h(\mathbf{z}_h,r_h) &= (\y_h - \mathbf{y}_d , \mathbf{z}_h), \\
	\label{3.1d} \hspace{1.65cm}b_h(\w_h,\psi_h) &= 0,\\
	\label{3.1e} (\w_h + \lambda \u_h,\overline{\mathbf{u}}_h-\u_h)_{\boldsymbol{L}^{2}(\Omega)} &\ge 0,
\end{align}
\end{subequations}
for all $(\mathbf{v}_h, \phi_h, \mathbf{z}_h, \psi_h, \overline{\u}_h) \in \boldsymbol{V}_h \times Q_h \times \boldsymbol{V}_h \times Q_h \times \boldsymbol{U}_{ad,h}$, where the bilinear forms $a_h(\cdot,\cdot), \ O_h(\cdot,\cdot)$ and  $b_h(\cdot,\cdot)$ are defined as follows:
\begin{align}
\nonumber a_h(\y_h,\mathbf{v}_h) = \ & (\nu\nabla \y_h,\nabla \mathbf{v}_h)_{\mathcal{T}_h}-  \langle \{\!\!\{\nu \nabla\y_h\}\!\!\},[\![\mathbf{v}_h \otimes \mathbf{n}]\!] \rangle_{\mathcal{E}^{i}(\mathcal{T}_h)} -  \langle \{\!\!\{\nu \nabla \mathbf{v}_h\}\!\!\},[\![\y_h \otimes \mathbf{n}]\!] \rangle_{\mathcal{E}^{i}(\mathcal{T}_h)} \\
\nonumber &+  \langle \gamma_{h}^{2} [\![\y_h \otimes \mathbf{n}]\!],[\![\mathbf{v}_h \otimes \mathbf{n}]\!] \rangle_{\mathcal{E}^{i}(\mathcal{T}_h)}  - \langle \nu \nabla \y_h,\mathbf{v}_h \otimes \mathbf{n} \rangle_{\mathcal{E}^{b}(\mathcal{T}_h)} -  \langle \nu \nabla \mathbf{v}_h,\y_h \otimes \mathbf{n} \rangle_{\mathcal{E}^{b}(\mathcal{T}_h)} \\
&+ 2\langle \gamma_{h}^{2}{\mathbf{y}}_h \otimes \mathbf{n},\mathbf{v}_h \otimes \mathbf{n} \rangle_{\mathcal{E}^{b}(\mathcal{T}_h)}\label{3.2},  \\
\nonumber 
O_h(\boldsymbol{\beta};\y_h,\mathbf{v}_h) &= \sum_{K \in \mathcal{T}_h} \int_{K} ((\sigma -\nabla\cdot \boldsymbol{\beta})\y_h\cdot \mathbf{v}_h -  \y_h \boldsymbol{\beta}^{T}: \nabla \mathbf{v}_h) \ dx + \sum_{K \in \mathcal{T}_h} \int_{\partial K_{\text{out}} \cap \Gamma_{\text{out}}} (\boldsymbol{\beta} \cdot \boldsymbol{n}_{K}) \y_h \cdot \mathbf{v}_h \ ds \\
&+  \sum_{K \in \mathcal{T}_h} \int_{\partial K_{\text{out}} \setminus \Gamma} (\boldsymbol{\beta} \cdot \boldsymbol{n}_{K}) \y_h \cdot (\mathbf{v}_h-\mathbf{v}^{e}) \ ds,\label{3.3}\\
 b_h(\y_h,\phi) &=  -(\phi_h, \nabla \cdot \y_h), \label{3.4}
\end{align}
and $\gamma_{h}^{2} = \frac{\gamma \nu}{h_E}.$ Here, $\gamma >0$ is a penalty parameter that is chosen to be sufficiently large, independent of mesh size and viscosity coefficient $\nu$, to ensure the stability of the DG formulation, and $\mathbf{v}^{e}$ represents the exterior trace of $\mathbf{v}$ evaluated over the face under consideration and is set to zero on the boundary.

The norms on the discrete spaces $\boldsymbol{V}_h$ and $Q_h$ are delineated as follows:
\begin{align}
\label{3.8}	 |\!|\!|(\mathbf{v},\phi)|\!|\!|^{2} &:= |\!|\!|\mathbf{v}|\!|\!|_h^{2} + \|\nu^{-1/2}\phi\|_{\mathcal{T}_h}^{2},
\end{align} where 
\begin{align}
\label{3.9}	|\!|\!|\mathbf{v}|\!|\!|_{h}^{2} &:=  \|\nu^{\frac{1}{2}} \nabla \mathbf{v}\|_{\mathcal{T}_h}^2 
+ \|[\![\gamma_{h} \mathbf{v} \otimes \mathbf{n}]\!]\!] \|_{\mathcal{E}^{i}(\mathcal{T}_h)}^2 
+  2 \| \gamma_{h}\mathbf{v} \otimes \mathbf{n}\|_{\mathcal{E}^{b}(\mathcal{T}_h)}^2 + \kappa \|\mathbf{v}\|_{\mathcal{T}_h}^2.
\end{align}
\begin{lemma}\label{lem_31} 
The bilinear forms outlined in (\ref{3.2}-\ref{3.4}) exhibit the following properties:
		\begin{align*}
			a_h(\mathbf{y},\mathbf{v}) &\lesssim  |\!|\!|\mathbf{y}|\!|\!|_{h} |\!|\!|\mathbf{v}|\!|\!|_{h}, &&\forall \mathbf{y}, \mathbf{v} \in \boldsymbol{V}_h, 
			&&& a_h(\mathbf{v},\mathbf{v}) \gtrsim |\!|\!|\mathbf{v}|\!|\!|_{h}^{2},\quad \forall \mathbf{v} \in \boldsymbol{V}_h,\\
			|b_h(\mathbf{v},\phi) &\lesssim  |\!|\!|\mathbf{v}|\!|\!|_{h} \|\phi\|_{0} \vspace{-9mm} &&\forall (\mathbf{v},\phi) \in \boldsymbol{V}_h \times Q_h,
			&&& \underset{\phi \in Q_h}{\inf} \ \underset{\mathbf{v} \in \boldsymbol{V}_h}{\sup} \frac{b_h(\mathbf{v},\phi)}{|\!|\!|\mathbf{v}|\!|\!|_{h} \|\phi\|_{0}} \gtrsim \alpha.
		\end{align*}
\end{lemma}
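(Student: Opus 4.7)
The plan is to verify each of the four inequalities in turn, relying on the fundamental toolkit for divergence-conforming DG: the discrete trace inequality $\|\nu^{1/2}\nabla \mathbf{v}_h\|_{0,e}\lesssim h_E^{-1/2}\|\nu^{1/2}\nabla \mathbf{v}_h\|_{0,K}$ for polynomial functions on an element $K$ with face $e$, a suitable choice of the penalty scaling $\gamma_h^2=\gamma\nu/h_E$, and the classical Brezzi-type stability theory for the $\mathrm{BDM}_k/\mathcal{P}_{k-1}$ pair, which is available because the spaces satisfy $\nabla\cdot\boldsymbol{V}_h\subset Q_h$.

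For the continuity of $a_h$, I would expand the definition (\ref{3.2}) and bound each summand by Cauchy--Schwarz. The element integral $(\nu\nabla\mathbf{y}_h,\nabla\mathbf{v}_h)_{\mathcal{T}_h}$ is controlled directly by $\|\nu^{1/2}\nabla\mathbf{y}_h\|_{\mathcal{T}_h}\|\nu^{1/2}\nabla\mathbf{v}_h\|_{\mathcal{T}_h}$, and the penalty term $\langle\gamma_h^2[\![\mathbf{y}_h\otimes\mathbf{n}]\!],[\![\mathbf{v}_h\otimes\mathbf{n}]\!]\rangle$ (and its boundary counterpart) is a diagonal contribution already contained in $|\!|\!|\cdot|\!|\!|_h$. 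For the interior consistency term $\langle\{\!\!\{\nu\nabla\mathbf{y}_h\}\!\!\},[\![\mathbf{v}_h\otimes\mathbf{n}]\!]\rangle_{\mathcal{E}^i(\mathcal{T}_h)}$, I would insert $\gamma_h\gamma_h^{-1}$, apply Cauchy--Schwarz, and use the discrete trace inequality to obtain
\begin{equation*}
\|\gamma_h^{-1}\{\!\!\{\nu\nabla\mathbf{y}_h\}\!\!\}\|_{0,\mathcal{E}^i(\mathcal{T}_h)}\lesssim \gamma^{-1/2}\|\nu^{1/2}\nabla\mathbf{y}_h\|_{\mathcal{T}_h},
\end{equation*}
which pairs with $\|\gamma_h[\![\mathbf{v}_h\otimes\mathbf{n}]\!]\|_{0,\mathcal{E}^i(\mathcal{T}_h)}\lesssim|\!|\!|\mathbf{v}_h|\!|\!|_h$. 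The boundary consistency terms are handled identically.

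For coercivity, testing (\ref{3.2}) against $\mathbf{v}_h=\mathbf{y}_h$ retains $\|\nu^{1/2}\nabla\mathbf{v}_h\|_{\mathcal{T}_h}^2$, the interior penalty $\|\gamma_h[\![\mathbf{v}_h\otimes\mathbf{n}]\!]\|^2_{\mathcal{E}^i(\mathcal{T}_h)}$ and $2\|\gamma_h\mathbf{v}_h\otimes\mathbf{n}\|^2_{\mathcal{E}^b(\mathcal{T}_h)}$, while the two symmetric consistency terms combine into $-2\langle\{\!\!\{\nu\nabla\mathbf{v}_h\}\!\!\},[\![\mathbf{v}_h\otimes\mathbf{n}]\!]\rangle$. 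Applying Young's inequality with a parameter $\epsilon>0$ and the same trace bound as above yields
\begin{equation*}
2|\langle\{\!\!\{\nu\nabla\mathbf{v}_h\}\!\!\},[\![\mathbf{v}_h\otimes\mathbf{n}]\!]\rangle|\le \tfrac{C\epsilon}{\gamma}\|\nu^{1/2}\nabla\mathbf{v}_h\|_{\mathcal{T}_h}^2+\tfrac{1}{\epsilon}\|\gamma_h[\![\mathbf{v}_h\otimes\mathbf{n}]\!]\|^2_{\mathcal{E}^i(\mathcal{T}_h)},
\end{equation*}
and choosing $\gamma$ large enough and $\epsilon$ suitably absorbs these terms into the diagonal contributions. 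The $\kappa\|\mathbf{v}_h\|^2_{\mathcal{T}_h}$ piece of $|\!|\!|\mathbf{v}_h|\!|\!|_h^2$ is produced by pairing the diffusion bilinear form with the reactive/convective contribution coming from $O_h$ (testing against $\mathbf{v}_h$, the skew-symmetric convective terms vanish by standard integration by parts together with the upwind boundary contributions, leaving $((\sigma-\tfrac12\nabla\cdot\boldsymbol\beta)\mathbf{v}_h,\mathbf{v}_h)\ge\kappa\|\mathbf{v}_h\|^2_{\mathcal{T}_h}$ by assumption (\ref{A1})).

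For the pressure bilinear form, continuity is a direct Cauchy--Schwarz: $|b_h(\mathbf{v}_h,\phi_h)|\le\|\phi_h\|_0\|\nabla\cdot\mathbf{v}_h\|_{\mathcal{T}_h}\lesssim\|\phi_h\|_0|\!|\!|\mathbf{v}_h|\!|\!|_h$ since $\|\nabla\cdot\mathbf{v}_h\|_{\mathcal{T}_h}\le\sqrt{d}\,\|\nabla\mathbf{v}_h\|_{\mathcal{T}_h}\lesssim\nu_0^{-1/2}|\!|\!|\mathbf{v}_h|\!|\!|_h$. For the inf--sup condition, given $\phi_h\in Q_h\subset L^2_0(\Omega)$ I would invoke the continuous surjectivity of $\nabla\cdot\colon\boldsymbol H^1_0(\Omega)\to L^2_0(\Omega)$ to produce $\mathbf{v}\in\boldsymbol H^1_0(\Omega)$ with $\nabla\cdot\mathbf{v}=-\phi_h$ and $\|\mathbf{v}\|_1\lesssim\|\phi_h\|_0$, then apply the BDM Fortin interpolant $\Pi_h\mathbf{v}\in\boldsymbol V_h$ whose commuting diagram property gives $\nabla\cdot\Pi_h\mathbf{v}=-\phi_h$ and $|\!|\!|\Pi_h\mathbf{v}|\!|\!|_h\lesssim\|\mathbf{v}\|_1$. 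The test function $\Pi_h\mathbf{v}$ then realises the inf--sup constant. The main obstacle in the whole lemma lies here: one must track the $\nu$-dependence throughout the Fortin argument so that the inf--sup constant $\alpha$ is robust with respect to variable viscosity, which requires choosing the Fortin operator so that the interior jump seminorm and the broken gradient norm scale correctly when the energy norm $|\!|\!|\cdot|\!|\!|_h$ is weighted by $\nu^{1/2}$; this is exactly where the $\boldsymbol H^{\mathrm{div}}$-conformity and the inclusion $\nabla\cdot\boldsymbol V_h\subset Q_h$ pay off, since no $\nu$-weighted pressure interpolation is needed.
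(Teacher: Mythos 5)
Your proposal is correct and follows essentially the same route as the paper, which simply delegates the four properties to the standard references (Cauchy--Schwarz plus discrete trace inequalities for continuity, the large-penalty Young's-inequality argument of \cite{UDG,CBK} for coercivity, and the Fortin/BDM commuting-diagram argument of \cite{MHD} for the inf-sup condition) --- precisely the arguments you spell out in detail. Your observation that the $\kappa\|\mathbf{v}_h\|_{\mathcal{T}_h}^2$ contribution to $|\!|\!|\mathbf{v}_h|\!|\!|_h^2$ can only come from pairing $a_h$ with the reaction part of $O_h$ via assumption (\ref{A1}) is a valid and worthwhile clarification of how the stated coercivity is to be read.
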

\begin{proof}
       The continuity of $a_h(\cdot,\cdot)$ is established through the application of the Cauchy-Schwarz inequality in (\ref{3.2}), while coercivity is demonstrated by referencing the analysis in \cite[Section~4]{UDG}. This is accomplished by considering $\boldsymbol{V}_h$ as a subspace of the vector-valued DG spaces based on $\mathcal{P}_{k+1}$. Given that $\boldsymbol{V}_h \subset \boldsymbol{H}^{\text{div}}(\Omega)$, the proof directly follows from \cite[Propositions ~4.2, 4.3]{CBK}.
Furthermore, the continuity of $b_h(\cdot,\cdot)$ is established through the utilization of the Cauchy-Schwarz inequality in (\ref{3.4}), and the inf-sup condition is proven in \cite[Theorem 6.12]{MHD}.
\end{proof}
Applying Lemma \ref{lem_31} in conjunction with the Babu{s}ka-Brezzi theory for saddle point problems ensures the unique solvability of the discrete optimality system.
\section{A priori error estimates}\label{PRIORI ERROR ESTIMATES.} \setcounter{equation}{0} 
This section is primarily dedicated to establishing a priori error estimates for the state, co-state, and control variables. 
Throughout this section, we assume that the variable viscosity $\nu(x) \in \W^{1,\infty}(\Omega)$, and the pair $(\text{BDM}_{1}-P_{0})$ is used to approximate velocity and pressure variables, respectively. To derive these estimates, we initially introduce certain auxiliary variables. For a specified control $\u$ and source field $\f$, let $(\y_h(\u), p_h(\u)) \in \boldsymbol{V}_h \times Q_h$ represent the solution to the following problem:
\begin{subequations}\label{4.1a}
\begin{align}
	\label{4.1a1}  a_h(\mathbf{y}_h(\u),\mathbf{v}_h)+ O_h(\boldsymbol{\beta};\y_h(\u),\v_h) + b_h(\mathbf{v}_h,p_h(\u)) &= (\f +\u, \mathbf{v}_h ) \hspace{1.12cm} \forall \ \mathbf{v}_h \in  \boldsymbol{V}_h, \\
	\label{4.1b}	\hspace{0.46cm} b_h(\mathbf{y}_h(\u),\phi_h) &= 0 \hspace{2.6cm} \forall \ \phi_h \in Q_h.
\end{align} 
\end{subequations}
Likewise, for the state velocity $\y$, let $(\w_h(\y), r_h(\y)) \in \boldsymbol{V}_h \times Q_h$ denote the solution to the following problem:
\begin{subequations}\label{4.1c}
\begin{align}
	\label{4.1c1}	a_h(\z_h,\w_h(\y)) + O_h(\boldsymbol{\beta};\z_h,\w_h(\y)) - b_h(\mathbf{z_h},r_h(\y)) &= (\mathbf{y} - \mathbf{y}_d , \mathbf{z}_h) \hspace{1cm} \forall \ \mathbf{z}_h \in  \boldsymbol{V}_h, \\
	\label{4.1d}	 \hspace{0.28cm} b_h(\w_h(\y),\psi_h) &= 0 \hspace{2.66cm} \forall \ \psi_h \in Q_h.
\end{align}
\end{subequations}
\begin{lemma}\label{lem:lemma 4.1.}
	Let $(\y_h,p_h), (\w_h,r_h)$ be a solution of the discrete systems (\ref{3.1a}-\ref{3.1b}) and (\ref{3.1c}-\ref{3.1d}), respectively. Additionally, let $(\y_h(\u), p_h(\u))$ and $(\w_h(\y), r_h(\y))$ represent the auxiliary variables defined earlier. Then, the following estimates hold:
	\begin{align}
		\label{4.2}		|\!|\!|(\y_h(\u)-\y_h, p_h(\u)-p_h)|\!|\!|  &\lesssim  \|\u-\u_h\|_{0}, \\
		\label{4.3}		|\!|\!|(\w_h(\y)-\w_h, r_h(\y)-r_h)|\!|\!|  &\lesssim  \|\y-\y_h\|_{0}.
	\end{align}
\end{lemma}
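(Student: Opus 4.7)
The plan is to subtract each auxiliary system from its counterpart in the discrete optimality system, obtain an Oseen-type error equation driven purely by an $\boldsymbol{L}^2$ perturbation, and then invoke the Babu\v{s}ka--Brezzi stability already granted by Lemma \ref{lem_31}. Since the two lemmas have essentially identical structure, I would carry out the argument in full for (\ref{4.2}) and then transport it verbatim to (\ref{4.3}).

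For (\ref{4.2}): subtracting (\ref{3.1a})--(\ref{3.1b}) from (\ref{4.1a1})--(\ref{4.1b}) shows that $(\boldsymbol{e}_h,\pi_h):=(\y_h(\u)-\y_h,\, p_h(\u)-p_h)\in\boldsymbol{V}_h\times Q_h$ satisfies
\begin{align*}
a_h(\boldsymbol{e}_h,\v_h)+O_h(\boldsymbol{\beta};\boldsymbol{e}_h,\v_h)+b_h(\v_h,\pi_h)&=(\u-\u_h,\v_h),\\
b_h(\boldsymbol{e}_h,\phi_h)&=0,
\end{align*}
for all $(\v_h,\phi_h)\in\boldsymbol{V}_h\times Q_h$. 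Because $\nabla\cdot\boldsymbol{V}_h\subset Q_h$, the second equation (with the choice $\phi_h=\nabla\cdot\boldsymbol{e}_h$) gives $\boldsymbol{e}_h\in\boldsymbol{V}_h^0$, so in particular $b_h(\boldsymbol{e}_h,\pi_h)=0$. Testing the first equation with $\v_h=\boldsymbol{e}_h$, applying the coercivity of $a_h$ from Lemma \ref{lem_31}, and using the nonnegativity of $O_h(\boldsymbol{\beta};\cdot,\cdot)$ on the exactly divergence-free subspace (a consequence of assumption (\ref{A1}) together with the upwind boundary contribution in (\ref{3.3})), I obtain
\[
|\!|\!|\boldsymbol{e}_h|\!|\!|_h^{2}\lesssim a_h(\boldsymbol{e}_h,\boldsymbol{e}_h)+O_h(\boldsymbol{\beta};\boldsymbol{e}_h,\boldsymbol{e}_h)=(\u-\u_h,\boldsymbol{e}_h)\le\|\u-\u_h\|_{0}\|\boldsymbol{e}_h\|_{0}.
\]
Since $\kappa\|\boldsymbol{e}_h\|_{\mathcal{T}_h}^{2}\le|\!|\!|\boldsymbol{e}_h|\!|\!|_h^{2}$ directly from (\ref{3.9}), dividing by $|\!|\!|\boldsymbol{e}_h|\!|\!|_h$ yields $|\!|\!|\boldsymbol{e}_h|\!|\!|_h\lesssim\|\u-\u_h\|_{0}$. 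For the pressure part, I apply the discrete inf-sup condition from Lemma \ref{lem_31}: for any $\v_h\in\boldsymbol{V}_h$,
\[
b_h(\v_h,\pi_h)=(\u-\u_h,\v_h)-a_h(\boldsymbol{e}_h,\v_h)-O_h(\boldsymbol{\beta};\boldsymbol{e}_h,\v_h)\lesssim\bigl(\|\u-\u_h\|_{0}+|\!|\!|\boldsymbol{e}_h|\!|\!|_h\bigr)|\!|\!|\v_h|\!|\!|_h,
\]
using continuity of $a_h$ (Lemma \ref{lem_31}) and the analogous continuity of $O_h$. Taking the supremum over $\v_h$ and invoking the already established bound on $|\!|\!|\boldsymbol{e}_h|\!|\!|_h$ controls $\|\nu^{-1/2}\pi_h\|_{\mathcal{T}_h}$ by $\|\u-\u_h\|_{0}$. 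Combining these delivers (\ref{4.2}).

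The argument for (\ref{4.3}) is structurally identical: subtracting (\ref{3.1c})--(\ref{3.1d}) from (\ref{4.1c1})--(\ref{4.1d}) produces the adjoint Oseen-type system for $(\w_h(\y)-\w_h,\,r_h(\y)-r_h)$ with right-hand side $(\y-\y_h,\z_h)$, and the same coercivity-plus-inf-sup chain, applied with the roles of trial and test velocity swapped in $O_h$, delivers the stated bound. The one point that deserves real care, which I view as the main technical step, is verifying that $a_h+O_h$ is coercive on $\boldsymbol{V}_h^0$ uniformly in $\u$ and $\y$: this requires combining the volumetric reaction term $((\sigma-\nabla\cdot\boldsymbol{\beta})\boldsymbol{e}_h,\boldsymbol{e}_h)$ bounded below by $\kappa\|\boldsymbol{e}_h\|_{\mathcal{T}_h}^{2}$ via (\ref{A1}), the exact divergence-free property to kill the convective transport term after integration by parts element by element, and the nonnegative outflow contribution from (\ref{3.3}). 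Once this is in place, the Babu\v{s}ka--Brezzi machinery applies cleanly and both estimates follow.
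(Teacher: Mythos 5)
Your proposal is correct and follows exactly the route the paper takes: its proof of this lemma is the single sentence ``subtract (\ref{3.1a}--\ref{3.1b}) from (\ref{4.1a}) and (\ref{3.1c}--\ref{3.1d}) from (\ref{4.1c}) and apply Lemma~\ref{lem_31},'' and your argument is precisely a careful expansion of that — error equation, coercivity of $a_h$ plus nonnegativity of $O_h$ on the divergence-free subspace for the velocity, and the discrete inf-sup condition for the pressure. The details you supply (including the use of $\nabla\cdot\boldsymbol{V}_h\subset Q_h$ and the factor $\kappa^{-1/2}$ hidden in passing from $\|\boldsymbol{e}_h\|_0$ to $|\!|\!|\boldsymbol{e}_h|\!|\!|_h$) are exactly what the paper leaves implicit.
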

\begin{proof} 
By subtracting (\ref{3.1a}-\ref{3.1b}) from (\ref{4.1a}) and (\ref{3.1c}-\ref{3.1d}) from (\ref{4.1c}), and utilizing Lemma-\ref{lem_31}, we infer the indicated outcome.
\end{proof}	
\begin{lemma}\label{lem: Lemma 4.2.}
	Suppose that $(\y,p), (\w,r) \in \boldsymbol{H}^{2}(\Omega) \times H^{1}(\Omega)$ be the solutions of the systems (\ref{2.13a}-\ref{2.13b})  and (\ref{2.13c}-\ref{2.13d}), repectively, for \ $\u,\ \f,\ \y_d \in \boldsymbol{H}^{1}(\Omega)$.  Let $(\y_h(\u),p_h(\u)), (\w_h(\y),r_h(\y))$ be the discrete auxiliary variables. Then, there exists a positive constant $C$ independent of mesh size $h$, but dependent on $\nu_{0}^{-1}$, $\kappa$ and $\boldsymbol{\beta}$ such that :
	\begin{itemize}
		\item[(i)] \textbf{Diffusion-dominated regime:} If $R_{K}(\x) = \min \Big \{\frac{|\boldsymbol{\beta} \cdot n| h_K}{C_{a}^{c} \nu\alpha}, 1 \Big\} <1, \ \forall \x \in \partial K, \ \forall K \in \mathcal{T}_h$, we have
	\begin{align}
		\label{4.11} 	\|\y-\y_h(\u)\|_{0} &\le Ch^{2} \|\y\|_{2},\quad  \|\w-\w_h(\y)\|_{0} \le Ch^{2} \|\w\|_{2}.
	\end{align}
		\item[(ii)] {\textbf{Convection-dominated regime:} If $R_{K}(\x) = \min \Big \{\frac{|\boldsymbol{\beta} \cdot n| h_K}{C_{a}^{c}\nu \alpha}, 1 \Big \} =1, \ \forall \x \in \partial K, \ \forall K \in \mathcal{T}_h$, we have
	\begin{align}
		\label{4.13} 	\|\y-\y_h(\u)\|_{0} &\le Ch^{3/2} \|\y\|_{2},\quad  \|\w-\w_h(\y)\|_{0} \le Ch^{3/2} \|\w\|_{2}.
	\end{align}}
\end{itemize}
\end{lemma}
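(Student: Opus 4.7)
The strategy is a standard Aubin--Nitsche duality argument, leveraging the energy-norm estimates for the auxiliary problems. I will sketch only the bound on $\|\y-\y_h(\u)\|_{0}$; the estimate for $\|\w-\w_h(\y)\|_{0}$ follows by a symmetric argument applied to the adjoint auxiliary system (\ref{4.1c1})--(\ref{4.1d}).

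First I would establish the energy-norm a priori estimate. Combining Lemma~\ref{lem_31} with continuity/coercivity of $O_h(\boldsymbol{\beta};\cdot,\cdot)$ on $\boldsymbol{V}_h^{0}$ in the norm $|\!|\!|\cdot|\!|\!|_{h}$, together with Galerkin orthogonality between (\ref{4.1a1})--(\ref{4.1b}) and the continuous state system (\ref{2.13a})--(\ref{2.13b}), standard BDM$_{1}$--$P_{0}$ interpolation estimates applied to $\y\in\boldsymbol{H}^{2}(\Omega)$ and $p\in H^{1}(\Omega)$ yield
\begin{align*}
|\!|\!|(\y-\y_h(\u),\, p-p_h(\u))|\!|\!| \;\lesssim\; C_{1}(h)\,\bigl(\|\y\|_{2}+\|p\|_{1}\bigr),
\end{align*}
with $C_{1}(h)=h$ in the diffusion-dominated regime and $C_{1}(h)=h^{1/2}$ loss (so $O(h^{1/2})$ order in the convective part) in the convection-dominated regime, consistent with the threshold encoded in $R_{K}(\x)$.

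Next I would introduce the continuous dual auxiliary problem: find $(\boldsymbol{\phi},\xi)\in\boldsymbol{V}\times Q$ such that
\begin{align*}
a(\z,\boldsymbol{\phi}) + O(\boldsymbol{\beta};\z,\boldsymbol{\phi}) - b(\z,\xi) &= (\y-\y_h(\u),\,\z) \qquad \forall\, \z\in\boldsymbol{V},\\
b(\boldsymbol{\phi},\psi) &= 0 \qquad\qquad\qquad\qquad \forall\, \psi\in Q.
\end{align*}
Under the standing hypotheses on $\nu,\boldsymbol{\beta},\sigma$ and on the polygon $\Omega$, we have $H^{2}\times H^{1}$ regularity $\|\boldsymbol{\phi}\|_{2}+\|\xi\|_{1} \lesssim \|\y-\y_h(\u)\|_{0}$. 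Testing this dual problem with $\z=\y-\y_h(\u)$, exploiting the divergence-free property of both $\y$ (from (\ref{2.13b})) and $\y_h(\u)$ (from (\ref{4.1b}) combined with $\nabla\cdot\boldsymbol{V}_{h}\subset Q_{h}$), and then subtracting a BDM$_{1}$--$P_{0}$ interpolant $(\boldsymbol{\phi}_{I},\xi_{I})\in\boldsymbol{V}_{h}\times Q_{h}$ via Galerkin orthogonality, yields the representation
\begin{align*}
\|\y-\y_h(\u)\|_{0}^{2} \;\lesssim\; |\!|\!|(\y-\y_h(\u),\,p-p_h(\u))|\!|\!|\;\cdot\;|\!|\!|(\boldsymbol{\phi}-\boldsymbol{\phi}_{I},\,\xi-\xi_{I})|\!|\!|.
\end{align*}
Applying interpolation estimates to the second factor gives $|\!|\!|(\boldsymbol{\phi}-\boldsymbol{\phi}_{I},\xi-\xi_{I})|\!|\!| \lesssim h(\|\boldsymbol{\phi}\|_{2}+\|\xi\|_{1})$, and combining with the energy estimate from the previous step and the regularity bound delivers the announced rates: $O(h^{2})$ in case (i) and $O(h^{3/2})$ in case (ii).

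The main obstacle will be the careful treatment of the convection term $O_h(\boldsymbol{\beta};\cdot,\cdot)$ in both steps. Specifically, the upwind jump contributions along $\partial K_{\mathrm{out}}\setminus\Gamma$ must be bounded by $R_{K}$-weighted norms in a way that distinguishes the two regimes, and the skew-symmetric structure used in (\ref{2.2}) must be matched by the discrete form to preserve Galerkin orthogonality. A subtle point is to verify that tracking the constants keeps the dependence on $\nu_{0}^{-1}$, $\kappa$, and $\|\boldsymbol{\beta}\|_{W^{1,\infty}}$ bounded uniformly in $h$, so that the $O(h^{1/2})$ deficit in the convection-dominated case is exactly captured through the $R_{K}$-weighted jump terms and nowhere else.
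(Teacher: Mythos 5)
Your proposal takes a genuinely different route from the paper, and in one of the two regimes it has a real gap. The paper does not run an Aubin--Nitsche duality argument on the full Oseen auxiliary problem. Instead it compares $\y_h(\u)$ with the divergence-conforming approximation $(\mathbf{s}_h,t_h)$ of a convection-free problem (\ref{4.12a})--(\ref{4.12b}), for which the optimal $L^2$ bound (\ref{4.13n}) is already available; writing $\eta_y=\y-\mathbf{s}_h$ and $\xi_y=\y_h(\u)-\mathbf{s}_h$, the error equation (\ref{4.16}) shows that $\xi_y$ is driven only by $O_h(\boldsymbol{\beta};\eta_y,\xi_y)$ (the pressure term drops because $\xi_y$ is pointwise divergence-free), and the $R_K$-weighted Young inequality in (\ref{4.18})--(\ref{4.19}) absorbs the jump terms of $\xi_y$ into the coercivity, leaving only quantities quadratic in $\eta_y$ that are $O(h^4)$ by (\ref{4.13n}) and the trace inequality. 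The two regimes are then distinguished simply by the choice of $R_K$. This avoids any elliptic-regularity hypothesis on the dual Oseen problem and keeps the dependence on $\nu_0^{-1}$, $\kappa$ and $\boldsymbol{\beta}$ explicit.

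In the diffusion-dominated case your duality argument is viable in principle, but you silently assume $\boldsymbol{H}^2\times H^1$ regularity of the dual Oseen problem, which requires convexity (or smoothness) of $\Omega$ and carries a constant depending on the data; the lemma only assumes a Lipschitz polygon. The more serious problem is case (ii). The energy-norm estimate does not degrade to $O(h^{1/2})$ there --- the gradient part of $|\!|\!|\cdot|\!|\!|_h$ still converges at $O(h)$ for $\mathrm{BDM}_1$, and what is lost is half a power only in the $L^2$/convective contributions --- so the product $h\cdot C_1(h)$ does not produce $h^{3/2}$ by the mechanism you describe. Moreover, duality is precisely the tool that fails when convection dominates: the regularity constant of the dual problem scales like a negative power of $\nu$, so the factor $h\,(\|\boldsymbol{\phi}\|_{2}+\|\xi\|_{1})\lesssim h\,\|\y-\y_h(\u)\|_{0}$ is not uniform in the regime where $|\boldsymbol{\beta}\cdot n|\,h_K \ge C_{a}^{c}\nu\alpha$. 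The $O(h^{3/2})$ rate must come from a direct upwind argument --- in the paper, from setting $R_K=1$ in the splitting (\ref{4.18}) so that the $|\boldsymbol{\beta}\cdot n|$-weighted terms in $\eta_y$ contribute the extra half power after the trace inequality --- not from Aubin--Nitsche. You would need to replace the duality step in case (ii) by such a direct estimate for the proof to close.
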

	To prove the result for the diffusion-dominated regime, we firstly extend \cite[Lemma 4.1]{HH} for variable viscosity parameter $\nu$ and a convective velocity field $\boldsymbol{\beta}$ which is not necessarily divergence free. Let $(\mathbf{s}_h,t_h) \in \boldsymbol{V}_h \times Q_h$ be the approximations satisfying 
	\begin{align}
		\label{4.12a}	a_h(\mathbf{s}_h,\v_h) + b_h(\v_h,t_h) &= -(\nabla\cdot(\nu \nabla \y), \v_h) \hspace{1cm} \forall \ \v_h \in V_h,\\
		\label{4.12b}	b_h(\mathbf{s}_h,\psi_h) &= 0 \hspace{2.63cm} \forall \ \psi_h \in Q_h.
	\end{align}
	Then, the following bound holds \cite{RSW}:
	\begin{align}
	\label{4.13n} \|\y-\mathbf{s}_h\|_{0} + h |\!|\!|\y-\mathbf{s}_h|\!|\!| \le Ch^{j} \|u\|_{j}, \ \ \ 1 \le j \le k+1.
	\end{align}
	Now we introduce the approximation and discretization errors for velocity and pressure variables as:
	\begin{align}
		\label{4.14n}	\eta_{y} &= \y - \mathbf{s}_h, \ \xi_{y} = \y_h(\u) - \mathbf{s}_h, \ \eta_p = p - \Pi_{Q}p,\	\xi_p = p_h - \Pi_{Q}p,
	\end{align}
where $\Pi_{Q}$ denotes the standard $L^{2}$-projection onto $Q_h$. 
		By taking the difference between the two formulations, we obtain:
		\begin{align*}
			a_h(\y-\y_h(\u),\v_h) + b_h(p-p_h(\u),\v_h) - b_h(\psi_h,\y-\y_h(\u)) + O_h(\boldsymbol{\beta};\y,\v_h)   - O_h(\boldsymbol{\beta};\y_h(\u),\v_h) = 0. 
		\end{align*}
		By employing (\ref{4.14n}) and substituting $(\v_h, \psi_h) = (\xi_{y}, \xi_p)$ with $a_h(\eta_{y}, \xi_{y}) = b_h(\eta_{p},\eta_{y}) = 0$, into the preceding equation, we get:
		\begin{align}
		\label{4.16}	a_h(\xi_{y},\xi_{y}) + O_h(\boldsymbol{\beta};\xi_{y},\xi_{y}) = b_h(\eta_p,\xi_y) + O_h(\boldsymbol{\beta};\eta_{y},\xi_{y}),
		\end{align}
		Applying the coercivity of $a_h(\cdot,\cdot)$ and employing estimates for $O_h$, we obtain:
		\begin{align}
		\label{4.17}	
		C_{a}^{c} |\!|\!|{\xi}_{y}|\!|\!|_{h}^{2} 
		\le   |b_h(\eta_{p},\xi_{y})| + |O_h(\boldsymbol{\beta};\eta_{y},\xi_{y})|.
		\end{align}
		The initial term on the right-hand side becomes zero in our approach due to the pointwise divergence-free and divergence-conforming nature of $\xi_y$. Using Holder's inequality, and Young's inequality, we arrive at:
		\begin{align}
			\nonumber	|O_h(\boldsymbol{\beta};\eta_{u},\xi_{y})| \le \ &\bigg(C \kappa \xi + \frac{C \|\boldsymbol{\beta}\|_{L^{\infty}}^{2}}{\nu_{0}} \bigg) \|\eta_{y}\|_{0}^{2} +  \frac{C_{a}^{c}}{2} |\!|\!|\xi_{y}|\!|\!|_{h}^{2} 
				+ 3\sum_{K \in \mathcal{T}_h}  \int_{\partial K} R_{K} |\boldsymbol{\beta} \cdot n| |[\![\eta_{y}]\!]|^{2} \ ds \\
				&+ 4 \sum_{K \in \mathcal{T}_h}  \int_{\partial K} R_{K} |\boldsymbol{\beta} \cdot n| |\eta_{y}|^{2} \ ds 
			\label{4.18} + {\frac{1}{4} \sum_{K \in \mathcal{T}_h}  \int_{\partial K} R_{K}^{-1} |\boldsymbol{\beta} \cdot n| |[\![\xi_{y}]\!]|^{2} \ ds}
		\end{align}
		Choosing $R_{K} = \frac{|\boldsymbol{\beta} \cdot n| h_K}{C_{a}^{c} \nu \alpha}$, we obtain:
		\begin{align}
			 \frac{1}{4} \sum_{K \in \mathcal{T}_h}  \int_{\partial K} \frac{C_{a}^{c} \nu \alpha}{|\boldsymbol{\beta} \cdot n| h_K} |\boldsymbol{\beta} \cdot n| |[\![\xi_{y}]\!]|^{2} \ ds 
			= \frac{C_{a}^{c}}{4} \sum_{K \in \mathcal{T}_h}  \int_{\partial K} \gamma_{h}^{2}  |[\![\xi_{y}]\!]|^{2} \ ds 
			\label{4.19}	 &\le \frac{C_{a}^{c}}{2} \|\gamma_{h} [\![\xi_{y}]\!]\|^{2}.
		\end{align}
		Through the application of the triangle inequality:
		\begin{align*}
			\|\y-\y_h(\u)\|_{0} = \|\y - \mathbf{s}_h + \mathbf{s}_h - \y_h(\u)\|_{0} \le \|\y - \mathbf{s}_h\|_{0} + \|\mathbf{s}_h - \y_h(\u)\|_{0} = \|\eta_y\|_{0} + \|\xi_y\|_{0}.
		\end{align*}
		Utilizing equations (\ref{4.18}-\ref{4.19}) in (\ref{4.17}) 
	and 	applying (\ref{4.13n}) in conjunction with the preceding results, we obtain:
		\begin{align}
			\nonumber	\|\y-\y_h\|_{0} 
			\nonumber	&\le Ch^{2} \|\y\|_{2} \ + \frac{1}{\sqrt{\kappa}} \bigg( \bigg(C \kappa \xi + \frac{C \|\boldsymbol{\beta}\|_{L^{\infty}}^{2}}{\nu_{0}}\bigg) h^{4} \|\y\|_{2}^{2} +  \frac{12C}{C_{a}^{c} \nu_0 \alpha} \sum_{K \in \mathcal{T}_h} \int_{\partial K} h_K |[\![\eta_{y}]\!]|^{2} \ ds \\
			\label{4.20} 	&+  \frac{16C}{C_{a}^{c} \nu_0 \alpha} \sum_{K \in \mathcal{T}_h} \int_{\partial K} h_K |\eta_{y}|^{2} \ ds \bigg)^{1/2}. 
		\end{align}
		The utilization of the trace inequality leads to the stated estimates. 
		Similarly, the second estimate (\ref{4.11}) for the co-state can be derived. In the same manner, we can establish the estimate for the convection-dominated regime by employing the approach outlined in \cite[Lemma 4.5]{HH} and setting parameter $R_{K}=1$.
	
\begin{lemma}\label{lem: Lemma 4.4.}
	Given solutions $(\y,p)$ and $(\w,r)$ in $\boldsymbol{H}^{2}(\Omega) \times H^{1}(\Omega)$ for the systems (\ref{2.13a}-\ref{2.13b}) and (\ref{2.13c}-\ref{2.13d}), respectively, with prescribed inputs $\u,\ \f,\ \y_d \in \boldsymbol{H}^{1}(\Omega)$, let $(\y_h(\u),p_h(\u))$ and $(\w_h(\y),r_h(\y))$ be the discrete auxiliary variables.Then, the following relationships hold:
	\begin{align}
		\label{4.21a} 	|\!|\!|\y-\y_h(\u)|\!|\!|_{h} + \|\nu^{-1/2}(p-p_h(\u))\|_{0} &\le C_s h \ (\|\y\|_{2}+ \|p\|_{1}), \\
		\label{4.21b} 	|\!|\!|\w-\w_h(\y)|\!|\!|_{h} + \|\nu^{-1/2}((r-r_h(\y))\|_{0} &\le C_a h \ (\|\w\|_{2}+ \|r\|_{1}),
	\end{align}
	for positive constants $C_{s}$ and ${C_{a}}$ independent of $h$ but dependent on $\nu$.
\end{lemma}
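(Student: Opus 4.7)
The plan is a Strang-type argument: split each error into an interpolation part and a discrete part, bound the former by standard approximation results, and bound the latter via coercivity, Galerkin orthogonality, and the discrete inf--sup condition. Let $\Pi_h$ denote the Brezzi--Douglas--Marini interpolant onto $\boldsymbol{V}_h$ and $\Pi_Q$ the $L^{2}$-projection onto $Q_h$, and write
\begin{align*}
\y - \y_h(\u) &= (\y - \Pi_h\y) + (\Pi_h\y - \y_h(\u)) =: \eta_y + \xi_y, \\
p - p_h(\u) &= (p - \Pi_Q p) + (\Pi_Q p - p_h(\u)) =: \eta_p + \xi_p.
\end{align*}
Standard approximation estimates for $(\mathrm{BDM}_{1},P_{0})$, together with trace inequalities and the $\W^{1,\infty}$-regularity of $\nu$ (so that $\nu^{1/2}$ can be commuted past $\Pi_h$ up to lower-order terms), yield $|\!|\!|\eta_y|\!|\!|_h \lesssim h\,\|\y\|_{2}$ and $\|\nu^{-1/2}\eta_p\|_0 \lesssim h\,\|p\|_{1}$. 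The BDM commuting-diagram property combined with $\nabla\cdot\boldsymbol{V}_h \subset Q_h$ guarantees $\nabla\cdot\xi_y = 0$, which will cause the pressure term to drop out when one tests against $\xi_y$.

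Consistency of the divergence-conforming DG formulation yields Galerkin orthogonality
\begin{align*}
a_h(\y - \y_h(\u), \v_h) + O_h(\boldsymbol{\beta}; \y - \y_h(\u), \v_h) + b_h(\v_h, p - p_h(\u)) = 0 \qquad \forall\,\v_h \in \boldsymbol{V}_h.
\end{align*}
Testing with $\v_h = \xi_y$, invoking the coercivity from Lemma \ref{lem_31} together with the non-negative reaction/upwind part of $O_h(\boldsymbol{\beta};\xi_y,\xi_y)$, and transferring the $\eta_y, \eta_p$ contributions to the right-hand side, I obtain
\begin{align*}
|\!|\!|\xi_y|\!|\!|_h^{2} \lesssim |a_h(\eta_y,\xi_y)| + |O_h(\boldsymbol{\beta};\eta_y,\xi_y)| + |b_h(\xi_y,\eta_p)|.
\end{align*}
The continuity estimates of Lemma \ref{lem_31}, Young's inequality, and the interpolation bounds then give $|\!|\!|\xi_y|\!|\!|_h \lesssim h(\|\y\|_{2}+\|p\|_{1})$, and the triangle inequality produces the velocity part of (\ref{4.21a}). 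For the pressure part, I would apply the discrete inf--sup condition of Lemma \ref{lem_31} to $\xi_p$ to find $\v_h \in \boldsymbol{V}_h$ with $|\!|\!|\v_h|\!|\!|_h \simeq 1$ that realizes $\|\nu^{-1/2}\xi_p\|_0 \lesssim b_h(\v_h,\xi_p)$, and then eliminate $b_h(\v_h,\xi_p)$ through Galerkin orthogonality, reducing the pressure control to the already-bounded velocity and interpolation errors.

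The chief technical obstacle will be controlling $O_h(\boldsymbol{\beta};\eta_y,\xi_y)$ when $\boldsymbol{\beta}$ is not divergence-free: the volume contributions follow from Hölder's inequality together with the bounds in (\ref{A1}), but the edge terms weighted by $|\boldsymbol{\beta}\cdot\n|$ must be rebalanced against the $\gamma_h$-penalty component of $|\!|\!|\cdot|\!|\!|_h^{2}$ in the spirit of the proof of Lemma \ref{lem: Lemma 4.2.}, so that the jump contribution of $\xi_y$ is absorbed on the left while the $\eta_y$ jump pieces contribute $O(h)$ after a trace inequality. The adjoint estimate (\ref{4.21b}) then follows by an entirely parallel argument, with the positions of trial and test functions swapped in $O_h$; the symmetry of $a_h$ and the identical coercivity/inf--sup structure mean that no new ingredients are required beyond the interpolation bounds on $\w$ and $r$.
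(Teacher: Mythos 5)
Your plan is sound and targets the right estimates, but you should know that the paper does not actually carry out this argument: its entire proof of Lemma~\ref{lem: Lemma 4.4.} is the single sentence ``Proof of this lemma follows from \cite[Theorem 4.8]{CBK}'', i.e., it delegates the energy-norm bound for the auxiliary discrete Oseen problems to the corresponding a priori result for the divergence-conforming DG discretization of the Oseen system. What you have written is, in effect, a reconstruction of the proof of that cited theorem: the BDM-interpolant/$L^2$-projection splitting, the commuting-diagram property giving $\nabla\cdot\xi_y=0$, coercivity plus the sign of $O_h(\boldsymbol{\beta};\xi_y,\xi_y)$, Galerkin orthogonality, and the discrete inf--sup step for the pressure are exactly the standard ingredients. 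Your self-contained version buys two things the bare citation does not: it makes explicit where the variable viscosity enters (commuting $\nu^{1/2}$ past the interpolant and handling the $\nu$-weighted edge averages, which is why $C_s,C_a$ may depend on $\nu$) and where the non-solenoidal $\boldsymbol{\beta}$ enters (rebalancing the $|\boldsymbol{\beta}\cdot\n|$-weighted edge terms against the $\gamma_h$-penalty, mirroring the paper's own computation in the proof of Lemma~\ref{lem: Lemma 4.2.}); these are precisely the features that distinguish the present setting from the constant-viscosity reference, so spelling the argument out is arguably more defensible than the citation. Two small cautions if you flesh this out: the continuity bound for $a_h$ in Lemma~\ref{lem_31} is stated only on $\boldsymbol{V}_h\times\boldsymbol{V}_h$, so the term $a_h(\eta_y,\xi_y)$ with $\eta_y\notin\boldsymbol{V}_h$ must be estimated directly (the edge averages $\{\!\!\{\nu\nabla\eta_y\}\!\!\}$ require an elementwise trace inequality rather than an inverse estimate); and since $\nabla\cdot\xi_y=0$, the term $b_h(\xi_y,\eta_p)$ in your intermediate bound vanishes identically, so the velocity estimate is in fact independent of $p$ --- the $\|p\|_{1}$ contribution enters only through the inf--sup step for $\xi_p$.
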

\begin{proof}
	Proof of this lemma follows from \cite[Theorem 4.8]{CBK}.
\end{proof}
\begin{lemma}\label{lem: Lemma 4.5.}
	Let $(\y,p,\w,r,\u)$ be the solution of the continuous system and $(\y_h,p_h,\w_h,r_h,\u_h)$ be their $H^{div}$ - DGFEM approximation under piecewise constant discretization of control. For the regularity assumptions same as previous result and a positive constant C, the following holds
	\begin{align*}
		\|\u-\u_h\|_{0} \le Ch \|\u\|_{1}.
	\end{align*}
\end{lemma}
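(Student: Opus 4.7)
My plan is to combine the continuous and discrete variational inequalities (\ref{2.13e}) and (\ref{3.1e}) with the cellwise $L^{2}$-projection onto piecewise constants. Let $P_h:\boldsymbol{L}^{2}(\Omega)\to\boldsymbol{U}_{ad,h}$ denote the elementwise average; since $\u_a,\u_b\in\R^d$ are constants, $P_h$ preserves the box constraints and therefore $P_h\u\in\boldsymbol{U}_{ad,h}$ is admissible in (\ref{3.1e}). I would test (\ref{2.13e}) with $\overline{\u}=\u_h$ (valid because $\boldsymbol{U}_{ad,h}\subset\boldsymbol{U}_{ad}$) and (\ref{3.1e}) with $\overline{\u}_h=P_h\u$, and add the two inequalities. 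The $\lambda$-contribution collapses to $-\lambda\|\u-\u_h\|_{0}^{2}$ by virtue of the defining orthogonality $(v_h,\u-P_h\u)=0$ for every piecewise constant $v_h$, applied with $v_h=\u_h$. Inserting $\w$ and exploiting $(P_h\w,P_h\u-\u)=0$ to rewrite the resulting $\w_h$-term, the two error contributions combine into
$$
\lambda\|\u-\u_h\|_{0}^{2}\le(\w-\w_h,\u_h-P_h\u)+(\w-P_h\w,P_h\u-\u).
$$

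The second term is routine: standard approximation gives $\|\w-P_h\w\|_{0}\lesssim h\|\w\|_{1}$ and $\|\u-P_h\u\|_{0}\lesssim h\|\u\|_{1}$, so this contribution is bounded by $Ch^{2}\|\w\|_{1}\|\u\|_{1}$. For the first term I use the contraction of $P_h$ to write $\|\u_h-P_h\u\|_{0}=\|P_h(\u_h-\u)\|_{0}\le\|\u-\u_h\|_{0}$. To control $\|\w-\w_h\|_{0}$ I split through the auxiliary variables,
$$
\w-\w_h=(\w-\w_h(\y))+(\w_h(\y)-\w_h),\qquad \y-\y_h=(\y-\y_h(\u))+(\y_h(\u)-\y_h),
$$
apply the $L^{2}$ estimates of Lemma~\ref{lem: Lemma 4.2.} to the continuous--auxiliary parts, and invoke Lemma~\ref{lem:lemma 4.1.} together with the inequality $\|\cdot\|_{0}\le\kappa^{-1/2}|\!|\!|\cdot|\!|\!|_{h}$ for the discrete--discrete parts. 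Chaining these yields
$$
\|\w-\w_h\|_{0}\le Ch^{2}(\|\w\|_{2}+\|\y\|_{2})+C_{\star}\|\u-\u_h\|_{0}.
$$

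Substituting these bounds into the key inequality and applying Young's inequality to the mixed term $h^{2}\|\u-\u_h\|_{0}$ produces an estimate of the form
$$
(\lambda-C_{\star}-\epsilon)\|\u-\u_h\|_{0}^{2}\le C(\epsilon)\,h^{2}\bigl(\|\u\|_{1}^{2}+\|\w\|_{1}^{2}\bigr)+Ch^{4}\bigl(\|\y\|_{2}^{2}+\|\w\|_{2}^{2}\bigr),
$$
from which the desired bound $\|\u-\u_h\|_{0}\le Ch\|\u\|_{1}$ follows once $\epsilon$ is taken small. The hard part will be precisely this absorption step: the adjoint error carries a first-order feedback from the control error, so the term $C_{\star}\|\u-\u_h\|_{0}^{2}$ cannot be dispatched by Young's inequality alone and requires either the standing assumption that $\lambda$ exceeds the adjoint-stability constant $C_{\star}=C_{\star}(\nu_{0},\kappa,\boldsymbol{\beta})$, equivalently the second-order sufficient condition (\ref{2.18n}), or a bootstrap refinement once a preliminary $O(h)$ bound is in place.
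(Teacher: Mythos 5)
Your setup---pairing the continuous variational inequality (\ref{2.13e}) tested with $\overline{\u}=\u_h$ against the discrete one (\ref{3.1e}) tested with $\overline{\u}_h=P_h\u$, and exploiting the orthogonality of the elementwise average---is the classical route and is essentially the skeleton of the paper's own argument (the paper phrases it through the reduced functional $F'$ and the $L^2$-projection $I_h$, but the resulting terms are the same). The algebra leading to
\[
\lambda\|\u-\u_h\|_{0}^{2}\le(\w-\w_h,\u_h-P_h\u)+(\w-P_h\w,P_h\u-\u)
\]
is correct, and your treatment of the second term is fine.

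The gap is in the first term. You bound $\|\u_h-P_h\u\|_{0}\le\|\u-\u_h\|_{0}$ and then estimate $\|\w-\w_h\|_{0}$ by a triangle inequality through the auxiliary variable $\w_h(\y)$, which returns $C_{\star}\|\u-\u_h\|_{0}$ with a stability constant $C_{\star}=C_{\star}(\nu_0,\kappa,\boldsymbol{\beta})$ that is not small. Both factors of the inner product are then of size $\|\u-\u_h\|_{0}$, so no choice of Young weight can make the resulting $C_{\star}\|\u-\u_h\|_{0}^{2}$ absorbable into $\lambda\|\u-\u_h\|_{0}^{2}$; you would need $\lambda>C_{\star}$, which proves a strictly weaker lemma than stated (the paper imposes no such condition and uses $\lambda=1$ in its experiments). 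The proposed bootstrap is circular: the ``preliminary $O(h)$ bound'' it would start from is exactly the assertion being proved. The paper escapes this by routing the adjoint error through the \emph{continuous} auxiliary adjoint $\w(\u_h)$ of (\ref{4.22c}--\ref{4.22d}) rather than through $\w_h(\y)$: the feedback piece $\w-\w(\u_h)$, which satisfies $\|\w-\w(\u_h)\|_0\lesssim\|\u-\u_h\|_0$, only ever appears multiplied by the projection error $\|\u-I_h\u\|_0=O(h)$, so Young's inequality can put an arbitrarily small weight $\epsilon/\lambda$ on the feedback factor and absorb it for every $\lambda>0$, while the remaining piece $\w(\u_h)-\w_h$ is a pure discretization error carrying no dependence on $\u-\u_h$. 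Equivalently, you could keep your decomposition but split $\w-\w_h=(\w-\w(\u_h))+(\w(\u_h)-\w_h)$ and discard the feedback contribution by sign via the duality identity $(\w-\w(\u_h),\u-\u_h)=a(\y-\y(\u_h),\w-\w(\u_h))=\|\y-\y(\u_h)\|_0^2\ge0$, which is the same identity the paper uses as (\ref{5.30}) in its a posteriori analysis.
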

\begin{proof}
	For $\u_h \in \boldsymbol{U}_{ad}$, 
	let $(\y(\u_h),p(\u_h)) \in \boldsymbol{V} \times Q$ represent the solution to the problem,
	\begin{align}
		\label{4.22a} a(\mathbf{y}(\u_h),\mathbf{v})+ b(\mathbf{v},p(\u_h)) &= (\f +\u_h, \mathbf{v} ) \hspace{1cm} \forall \ \mathbf{v} \in  \boldsymbol{V}, \\
		\label{4.22b}	\hspace{0.46cm} b(\mathbf{y}(\u_h),\phi) &= 0 \hspace{2.5cm} \forall \ \phi \in Q,
	\end{align} 
	and let $(\w(\u_h),r(\u_h))  \in  \boldsymbol{V} \times Q$ be the solution of the problem
	\begin{align}
		\label{4.22c}	a(\z,\w(\u_h)) - b(\mathbf{z},r(\u_h)) &= ( \mathbf{y}(\u_h) - \mathbf{y}_d , \mathbf{z}) \hspace{0.45cm} \forall \ \mathbf{z} \in \boldsymbol{V}, \\
		\label{4.22d}	 \hspace{0.28cm} b(\w(\u_h),\psi) &= 0 \hspace{2.6cm} \forall \ \psi \in Q.
	\end{align}
Considering the solution $\w$ of (\ref{2.13c}) and $\w(\u_h)=\w(\y(\u_h)),$ the reduced functional $F$ exhibits the following property:
	\begin{align*}
		F'(\u) (\theta) &= - \lambda (\u,\theta) + (\theta,\w) \hspace{1.8cm} &&\forall \ \theta \in \boldsymbol{U}_{ad}, \\ 
		F'(\u_{h}) (\theta_{h}) &= - \lambda (\u_{h},\theta_{h}) + (\theta_{h},\w(\u_h)) \hspace{0.5cm} &&\forall \ \theta_{h} \in \boldsymbol{U}_{ad,h}.
	\end{align*}
By employing second-order sufficient optimality conditions, inspired by the approach in \cite{MBO}, we establish the following identities:
\begin{align*}
	-(\lambda \u,\u - \u_{h}) + (\u - \u_{h},\w) &= 0 = -(\lambda \u,\u - I_{h} \u) + (\u - I_{h} \u,\w), \\
	-(\lambda \u_{h},\u_{h} - I_{h} \u) + (\u_{h} - I_{h} \u,\w_{h}) &= 0,
\end{align*}
where $I_h$ denotes the $L^2-$projection.  Moreover, we have 
\begin{align}
\nonumber	\lambda \|\u-\u_{h}\|_{0}^{2} &\le F'(\u)(\u-\u_h) - F'(\u_h)(\u-\u_h) \\
\nonumber	&=  {\lambda(\u-\u_h,I_h \u- \u)} + {(I_h \u - \u,\w_h-\w(\u_h))} + {(I_h \u - \u,\w(\u_h)-\w)} \\
\label{4.23n} 	& \quad + {(\u_{h}-\u,\w(\u_h)-\w_h)}= T_1+T_2+T_3+T_4.
\end{align}
By employing Cauchy-Schwarz and Young's inequality, for a positive scalar $\epsilon$, we obtain:
\begin{align*}
	T_1 \  &\le \lambda \|\u-\u_h\|_{0} \|I_h \u- \u\|_{0} \le \lambda \epsilon \|\u-\u_h\|_{0}^{2} + \frac{\lambda}{4\epsilon} \|I_h \u- \u\|_{0}^{2},\\
	T_2 \  &\le \|I_h \u- \u\|_{0} \|\w_h-\w(\u_h)\|_{0} \le \frac{\lambda}{2\epsilon}  \|I_h \u- \u\|_{0}^{2} + \frac{\epsilon}{2\lambda}  \|\w_h-\w(\u_h)\|_{0}^{2},\\
	T_3 \  &\le \|I_h \u- \u\|_{0} \|\w(\u_h)-\w\|_{0} \le \frac{\epsilon}{\lambda}  \|I_h \u- \u\|_{0}^{2} + \frac{\lambda}{4\epsilon}  \|\w(\u_h)-\w\|_{0}^{2},\\
	T_4 \  &\le \|\u_{h}-\u\|_{0} \|\w(\u_h)-\w_h\|_{0} \le \lambda \epsilon \|\u_{h}-\u\|_{0}^{2} + \frac{1}{4\lambda \epsilon}   \|\w(\u_h)-\w_h\|_{0}^{2}.
\end{align*}
Incorporating these bounds into (\ref{4.23n}), we get:
\begin{align}
\label{4.24n} 	\lambda \|\u-\u_{h}\|_{0}^{2} &\le \frac{\lambda}{\epsilon} \|\u-I_h \u\|_{0}^{2} + 2\lambda \epsilon \|\u-\u_h\|_{0}^{2} + \frac{\epsilon}{\lambda}  \|\w-\w(\u_h)\|_{0}^{2} + \bigg( \frac{\epsilon}{2\lambda}+\frac{1}{4\lambda \epsilon}\bigg)\|\w(\u_h)-\w_h\|_{0}^{2}.
\end{align}
Subtracting (\ref{4.22c}-\ref{4.22d}) from (\ref{2.13c}-\ref{2.13d}), we obtain:
	\begin{align*}
	a(\z,\w-\w(\u_h)) - b(\mathbf{z},r-r(\u_h)) &= (\y - \mathbf{y}(\u_h), \mathbf{z}) \hspace{0.45cm} \forall \ \mathbf{z} \in  \boldsymbol{V}, \\
	\hspace{0.28cm} b(\w-\w(\u_h),\psi) &= 0 \hspace{2.63cm} \forall \ \psi \in Q.
\end{align*}
Consider $\z = \w-\w(\u_h)$ and $\psi = r-r(\u_h)$; then, we have:
\begin{align*}
	a(\w-\w(\u_h),\w-\w(\u_h)) = (\y - \mathbf{y}(\u_h), \w-\w(\u_h)).
\end{align*}
Now using
\begin{align*}
	|\!|\!|\v|\!|\!|^{2} 
	&\ge \bigg(\frac{\nu_{0}}{C_P^{2}}+ \kappa \bigg) \|\mathbf{v}\|_{0}^2 = \eta \|\mathbf{v}\|_{0}^2,
\end{align*}
in the above equation, we have
\begin{align}
\label{4.25} 	\|\w-\w(\u_h)\|_{0} \le \eta^{1/2} 	\|\y-\y(\u_h)\|_{0} \le \eta \|\u-\u_h\|_{0}.
\end{align}
Applying (\ref{4.25}) to (\ref{4.24n}), we obtain:
\begin{align*}
	\lambda \|\u-\u_{h}\|_{0}^{2} &\le \frac{\lambda}{\epsilon} \|\u-I_h \u\|_{0}^{2} + \epsilon \bigg(2\lambda + \frac{\eta^{2}}{\lambda}\bigg) \|\u-\u_h\|_{0}^{2} + \bigg( \frac{\epsilon}{2\lambda}+\frac{1}{4\lambda \epsilon}\bigg)\|\w(\u_h)-\w_h\|_{0}^{2}.
\end{align*}
By selecting $\epsilon = \frac{\lambda}{2} \left(2\lambda + \eta^{2} \lambda^{-1}\right)^{-1}$, we have:
\begin{align*}
	\frac{\lambda}{2} \|\u-\u_{h}\|_{0}^{2} &\le \frac{\lambda}{\epsilon} \|\u-I_h \u\|_{0}^{2} + \bigg( \frac{\epsilon}{2\lambda}+\frac{1}{4\lambda \epsilon}\bigg)\|\w(\u_h)-\w_h\|_{0}^{2} \\
	\|\u-\u_{h}\|_{0}^{2} &\le \frac{2}{\epsilon} \|\u-I_h \u\|_{0}^{2} + \bigg( \frac{\epsilon}{\lambda^{2}}+\frac{1}{2\lambda^{2} \epsilon}\bigg)\|\w(\u_h)-\w_h\|_{0}^{2}.
\end{align*}
Applying the standard $L^2-$projection estimate and considering estimates for the second term, we obtain:
\begin{align*}
		\|\u-\u_{h}\|_{0} &\le C_1 \bigg(\sum_{K \in \mathcal{T}_h} h_{K}^{2} \|\u\|_{1,K}\bigg)^{1/2} \le Ch \|\u\|_{1}.
\end{align*}
\end{proof}
	\begin{theorem}\label{thm: Theorem 4.6.}
		Let $(\y,p,\w,r)$ be the solution to the system (\ref{2.13a}-\ref{2.13d}), and let  $(\y_h,p_h, \w_h,r_h)$ denote its $H^{div}$ - DGFEM approximation achieved through a piecewise constant discretization of the control. For a positive constant $C$, the theorem asserts the following inequality:
		\begin{itemize}
			\item[(i)] 	For the \textbf{diffusion-dominated regime}, we have the following estimates:
			\begin{align}
				\label{4.39} \| \mathbf{y}- \mathbf{y}_h\|_{0}  &\le Ch^{2}\big[ \|\y\|_{2}+\|\u\|_{1}\big], \\
				\label{4.40}  \| \mathbf{w}- \mathbf{w}_h\|_{0}  &\le  Ch^{2} \big[\|\y\|_{2}+\|\u\|_{1}+ \|\w\|_{2}\big].
			\end{align} 
					\item[(ii)] {For the \textbf{convection-dominated regime}, we have the following estimates:
		\begin{align}
			\label{4.41} \| \mathbf{y}- \mathbf{y}_h\|_{0}  &\le Ch^{3/2}\big[ \|\y\|_{2}+\|\u\|_{1}\big], \\
			\label{4.42}  \| \mathbf{w}- \mathbf{w}_h\|_{0}  &\le  Ch^{3/2} \big[\|\y\|_{2}+\|\u\|_{1}+ \|\w\|_{2}\big].
		\end{align}}
		\end{itemize}	\end{theorem}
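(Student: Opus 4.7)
The plan is to bound each error by triangle inequality using the auxiliary discrete solutions $(\y_h(\u),p_h(\u))$ and $(\w_h(\y),r_h(\y))$ from \eqref{4.1a}--\eqref{4.1c}. This split isolates the pure PDE-discretization error (to which Lemma 4.2 applies) from the sensitivity of the discrete solutions to perturbations of the control (for the state) and of the state (for the adjoint), for which Lemma 4.1 together with Lemma 4.5 is tailored.

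For the state in the diffusion-dominated regime, first write
\[
\|\y-\y_h\|_0 \;\le\; \|\y-\y_h(\u)\|_0 \;+\; \|\y_h(\u)-\y_h\|_0 .
\]
The first term is controlled directly by Lemma 4.2(i), delivering $Ch^{2}\|\y\|_{2}$. For the second term, Lemma 4.1 yields $|\!|\!|\y_h(\u)-\y_h|\!|\!|_h \lesssim \|\u-\u_h\|_0$, and the embedding $\|v\|_0 \le \kappa^{-1/2}|\!|\!|v|\!|\!|_h$ built into \eqref{3.9} together with Lemma 4.5 gives a first $O(h)$ bound. To lift this to $O(h^{2})$ I would run an Aubin--Nitsche duality: let $(\phi,\rho)\in\boldsymbol{V}\times Q$ solve the continuous adjoint Oseen problem with data $\y_h(\u)-\y_h$; elliptic regularity then gives $\|\phi\|_{2}+\|\rho\|_{1}\lesssim\|\y_h(\u)-\y_h\|_0$, and exploiting consistency of the $H^{\mathrm{div}}$-DG formulation together with the best-approximation bound $|\!|\!|\phi-\phi_h|\!|\!|_h \lesssim h\|\phi\|_{2}$ for a suitable $\phi_h\in\boldsymbol{V}_h$ absorbs one extra factor of $h$, producing \eqref{4.39}.

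For the adjoint, the analogous split
\[
\|\w-\w_h\|_0 \;\le\; \|\w-\w_h(\y)\|_0 \;+\; \|\w_h(\y)-\w_h\|_0
\]
works: the first summand is bounded by Lemma 4.2(i) applied to the adjoint system, giving $Ch^{2}\|\w\|_{2}$. For the second, $\w_h(\y)-\w_h$ solves the discrete adjoint Oseen problem with source $\y-\y_h$, so the adjoint analogue of Lemma 4.1, combined with the energy-to-$L^{2}$ embedding, furnishes $\|\w_h(\y)-\w_h\|_0 \lesssim \|\y-\y_h\|_0$, which has just been controlled. Summing yields \eqref{4.40}. The convection-dominated regime proceeds identically, replacing Lemma 4.2(i) by (ii) so that the $h^{2}$ rate becomes $h^{3/2}$; the perturbation estimates for $\y_h(\u)-\y_h$ and $\w_h(\y)-\w_h$ are unchanged.

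The principal obstacle is the duality step that upgrades the energy-norm control of $\y_h(\u)-\y_h$ to the sharp $L^{2}$-rate, since without it the triangle-inequality split based solely on Lemmas 4.1 and 4.5 delivers only $O(h)$. This step must simultaneously absorb the consistency contributions from the interior-penalty terms in $a_h$ and the upwind terms in $O_h$, and it relies on $H^{2}$-regularity for the dual Oseen problem with variable viscosity $\nu\in\W^{1,\infty}(\Omega)$ and non-solenoidal drift $\boldsymbol{\beta}\in\W^{1,\infty}(\Omega)$; in the convection-dominated regime one must additionally verify that the recovered $h$ factor does not erode the $h^{3/2}$ rate already inherited from Lemma 4.2(ii), so the regularity constants must be tracked carefully in $\nu^{-1}$ and in $\|\boldsymbol{\beta}\|_{\W^{1,\infty}(\Omega)}$.
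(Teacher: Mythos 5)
Your decomposition of the adjoint error and your treatment of the terms $\|\y-\y_h(\u)\|_0$ and $\|\w-\w_h(\y)\|_0$ via Lemma~\ref{lem: Lemma 4.2.} match the paper. The gap is in the state estimate. You split $\|\y-\y_h\|_0 \le \|\y-\y_h(\u)\|_0 + \|\y_h(\u)-\y_h\|_0$ and propose to upgrade the second term from $O(h)$ to $O(h^2)$ by an Aubin--Nitsche duality against the continuous dual Oseen problem. This cannot work as described: $\y_h(\u)-\y_h$ is the difference of two \emph{discrete} solutions driven by the control perturbation $\u-\u_h$, so there is no Galerkin orthogonality to exploit. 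The duality identity collapses to
\begin{align*}
\|\y_h(\u)-\y_h\|_0^2 \;=\; (\u-\u_h,\tilde{\w}_h) \;+\; \text{(consistency terms)},
\end{align*}
and the leading term $(\u-\u_h,\tilde{\w}_h)$ admits no extra power of $h$ beyond $\|\u-\u_h\|_0\,\|\tilde{\w}_h\|_0$, because Lemma~\ref{lem: Lemma 4.5.} only gives $\|\u-\u_h\|_0 \le Ch\|\u\|_1$ and $\u-\u_h$ is not orthogonal to anything. The best-approximation bound $|\!|\!|\phi-\phi_h|\!|\!|_h\lesssim h\|\phi\|_2$ only helps with the consistency remainder, not with this source term, so your argument stalls at $O(h)$.

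The paper avoids this by the three-term split $\|\y-\y_h\|_0 \le \|\y-\y_h(\u)\|_0 + \|\y_h(\u)-\y_h(\Pi_h\u)\|_0 + \|\y_h(\Pi_h\u)-\y_h\|_0$, where $\Pi_h$ is the $L^2$-projection onto piecewise constants. For the middle term the discrete duality source is $(\u-\Pi_h\u,\tilde{\w}_h)$, and here the orthogonality $(\u-\Pi_h\u,\Pi_h\tilde{\w}_h)=0$ lets one write it as $(\u-\Pi_h\u,\tilde{\w}_h-\Pi_h\tilde{\w}_h)\le Ch\|\u-\Pi_h\u\|_0|\!|\!|\tilde{\w}_h|\!|\!|_h$, which yields $O(h^2)\|\u\|_1$ after combining with the stability bound $|\!|\!|\tilde{\w}_h|\!|\!|_h\lesssim\|\y_h(\u)-\y_h(\Pi_h\u)\|_0$. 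The third term requires a separate superconvergence-type bound on $\y_h(\Pi_h\u)-\y_h$ (equation (\ref{4.40h}) in the paper), which exploits the discrete and continuous variational inequalities rather than Lemma~\ref{lem: Lemma 4.5.} alone. Without introducing the intermediate quantity $\y_h(\Pi_h\u)$ and this projection orthogonality, the $O(h^2)$ (resp.\ $O(h^{3/2})$) rate for the state, and hence for the adjoint, is not reachable by your route.
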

	\begin{proof}
	        To initiate, we decompose the total error and apply the triangle inequality to yield:
		\begin{align}
			\label{4.27} 	\| \mathbf{y}- \mathbf{y}_h\|_{0} \le \| \mathbf{y}- \mathbf{y}_h(\u)\|_{0} + \| \mathbf{y}_h(\u)- \mathbf{y}_h(\Pi_h \u)\|_{0} + \| \mathbf{y}_h(\Pi_h \u)- \mathbf{y}_h\|_{0},
		\end{align} 
		where $\Pi_h$ denotes $\boldsymbol{L}^{2}$-projection operator onto the discrete control space $\boldsymbol{U}_{ad,h}.$
		 Let $(\tilde{\w}_h,\tilde{r}_h) \in \boldsymbol{V}_h \times Q_h$ be the unique solution of the following auxiliary discrete dual Oseen problem 
		\begin{align}
			\label{4.28a}	a_h(\tilde{\z}_h,\tilde{\w}_h) + O_h(\boldsymbol{\beta};\tilde{\z}_h,\tilde{\w}_h)- b_h(\tilde{\z}_h,\tilde{r}_h) &= (\y_h(\u) - \y_h(\Pi_h \u) , \tilde{\z}_h) \    \ \hspace{1.47cm} \forall \ \tilde{\z}_h \in  \boldsymbol{V}_h, \\
			\label{4.28b}	\hspace{1.65cm}b_h(\tilde{\w}_h,\tilde{\psi}_h) &= 0   \hspace{5cm}  \forall \ \tilde{\psi}_h \in Q_h.
		\end{align}
		By substituting $\tilde{\z}_h = \tilde{\w}_h$ into (\ref{4.28a}) and $\tilde{\psi}_h = \tilde{r}_h$ into (\ref{4.28b}), and subsequently combining the resultant equations, we derive:
		\begin{align*}
			a_h(\tilde{\w}_h,\tilde{\w}_h) + O_h(\boldsymbol{\beta};\tilde{\w}_h,\tilde{\w}_h) = (\y_h(\u) - \y_h(\Pi_h \u) , \tilde{\w}_h).
		\end{align*} 
		Using coercivity of $a_h(\cdot,\cdot)$ and positiveness property of $O_h(\boldsymbol{\beta};\cdot,\cdot)$, we get
		\begin{align}
			\label{4.29n} |\!|\!|\tilde{\w}_h|\!|\!|_{h} \le C \|\y_h(\u) - \y_h(\Pi_h \u)\|_{0}.
		\end{align}
		By substituting $\tilde{\z}_h = \y_h(\u) - \y_h(\Pi_h \u)$ and $\tilde{\psi}_h = p_h(\u) - p_h(\Pi_h \u)$ into (\ref{4.28a}) and (\ref{4.28b}) respectively, and summing both equations, we obtain:
		\begin{align}
			\nonumber	a_h(\y_h(\u) - &\y_h(\Pi_h \u),\tilde{\w}_h) + O_h(\boldsymbol{\beta};\y_h(\u) - \y_h(\Pi_h \u),\tilde{\w}_h) - b_h(\y_h(\u) - \y_h(\Pi_h \u),\tilde{r}_h) \\
			\label{4.30n} &+ b_h(\tilde{\w}_h,p_h(\u) - p_h(\Pi_h \u)) = (\y_h(\u) - \y_h(\Pi_h \u) , \y_h(\u) - \y_h(\Pi_h \u)).
		\end{align}
		Now, by utilizing the discrete state equations for $\y_h(\u)$ and $\y_h(\Pi_h \u)$, we acquire:
		\begin{align}
			\nonumber	a_h(\y_h(\u) - &\y_h(\Pi_h \u),\tilde{\w}_h) + O_h(\boldsymbol{\beta};\y_h(\u) - \y_h(\Pi_h \u),\tilde{\w}_h) + b_h(\tilde{\w}_h,p_h(\u) - p_h(\Pi_h \u)) \\
			\label{4.31n}&- b_h(\y_h(\u) - \y_h(\Pi_h \u),\tilde{r}_h) = (\u - \Pi_h \u , \tilde{\w}_h).
		\end{align}
		By subtracting (\ref{4.31n}) from (\ref{4.30n}) and using $(\u - \Pi_h \u , \Pi_h \tilde{\w}_h)=0$, we find that
		\begin{align}
				\|\y_h(\u) - \y_h(\Pi_h \u)\|_{0}^{2} &= (\u - \Pi_h \u , \tilde{\w}_h) 
			\label{4.32n}= (\u - \Pi_h \u , \tilde{\w}_h - \Pi_h \tilde{\w}_h) 
		\end{align}
		Employing the Cauchy-Schwarz inequality and the approximation properties of $\boldsymbol{L}^2-$projection, we obtain:
		\begin{align}
			 (\u - \Pi_h \u , \tilde{\w}_h - \Pi_h \tilde{\w}_h) &\le Ch \|\u - \Pi_h \u\|_{0} \|\tilde{\w}_h\|_{1}
			\label{4.33}\le Ch \|\u - \Pi_h \u\|_{0} |\!|\!|\tilde{\w}_h|\!|\!|_{h}
		\end{align}
		By applying (\ref{4.29n}) and (\ref{4.33}) to (\ref{4.32n}), we obtain:
		\begin{align}
			\label{4.34}\|\y_h(\u) - \y_h(\Pi_h \u)\|_{0} \le Ch \|\u - \Pi_h \u\|_{0}.
		\end{align}
		Following the approach employed in the proof of Lemma-\ref{lem:lemma 4.1.}, the third term in the error decomposition is constrained as:
	\begin{align}
		\label{4.40h}	\| \mathbf{y}_h(\Pi_h \u)- \mathbf{y}_h\|_{0} &\le Ch \|\u - \Pi_h \u\|_{0} |\!|\!|\tilde{\w}_h|\!|\!|_{h} + Ch^{2} \|\u\|_{1} |\!|\!|\tilde{\w}_h|\!|\!|_{h}.
	\end{align}
			By substituting (\ref{4.33}) and (\ref{4.40h}) into (\ref{4.27}), we derive the following estimation
			\begin{align*}
				 \| \mathbf{y}- \mathbf{y}_h\|_{0} &\le Ch^{2} \ [\|\y\|_{2} +\|\u\|_{1}].
			\end{align*}
			We decompose the adjoint velocity error as $\w-\w_h= \w-\w_h(\y)+\w_h(\y)-\w_h$. By employing the triangle inequality and leveraging Lemmas-\ref{lem:lemma 4.1.} and \ref{lem: Lemma 4.2.} along with the previously established result, we obtain the following:
			\begin{align}
					\| \mathbf{w}- \mathbf{w}_h\|_{0} 
					& \le \|\w-\w_h(\y)\|_{0} + \|\y-\y_h\|_{0} 
				\label{4.30n1}\le Ch^{2} \ \big[\|\y\|_{2}+\|\u\|_{1}+ \|\w\|_{2}\big].
			\end{align}
(ii){Applying the triangle inequality and employing similar steps as in the first part, we utilize Lemma-\ref{lem: Lemma 4.2.} to obtain the estimates (\ref{4.41}) and (\ref{4.42})}.
\end{proof}
\begin{remark}
	{As evident from the findings in Lemma-\ref{lem: Lemma 4.2.} and Theorem-\ref{thm: Theorem 4.6.}, the velocity variable exhibits quasi-optimal convergence rates in convection-dominated regimes. 
	Nevertheless, when considering the assumptions delineated by Cockburn et al. in \cite[Assumptions - A1, A2]{CDGA}, optimal convergence rates are reinstated.}
\end{remark}
Next, we articulate and demonstrate an energy norm error estimate, derived from the previously obtained estimates for the error decomposition terms.
\begin{theorem} 
		Let $(\y, p, \w, r)$ be the state and adjoint velocities, and pressures, solutions of the system (\ref{2.13a}-\ref{2.13d}) with corresponding discrete approximate solutions $(\y_h, p_h, \w_h, r_h)$. Then, there exists a positive constant $C$ independent of the mesh size h such that 
		\begin{align}
			\label{4.59}	|\!|\!|\y-\y_h|\!|\!|_{h} + \|\nu^{-1/2}(p-p_h)\|_{0} &\le C h \ (\|\y\|_{2}+ \|p\|_{1}+\|\u\|_{1}), \\
	\label{4.60}	|\!|\!|\w-\w_h|\!|\!|_{h} + \|\nu^{-1/2}(r-r_h)\|_{0} &\le C h \ (\|\w\|_{2}+ \|r\|_{1}).
		\end{align}
		\end{theorem}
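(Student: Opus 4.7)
The plan is to establish both estimates via an error splitting around the auxiliary variables $(\y_h(\u), p_h(\u))$ and $(\w_h(\y), r_h(\y))$ introduced in \eqref{4.1a}--\eqref{4.1c}, combining in a clean way the three earlier lemmas that supply (i) energy estimates for the auxiliary pair against the continuous solution, (ii) stability in the energy norm of the auxiliary pair against perturbations in the right-hand side data, and (iii) an $L^2$ bound on the control error.

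First I would treat the state pair. Write
\begin{align*}
\y - \y_h &= (\y - \y_h(\u)) + (\y_h(\u) - \y_h), \\
p - p_h &= (p - p_h(\u)) + (p_h(\u) - p_h),
\end{align*}
and apply the triangle inequality in the norm $|\!|\!|(\cdot,\cdot)|\!|\!|$ defined in \eqref{3.8}. The first pair is controlled directly by Lemma~\ref{lem: Lemma 4.4.}, giving a term of size $C_s h\,(\|\y\|_2 + \|p\|_1)$. For the second pair, the key observation is that $(\y_h(\u)-\y_h, p_h(\u)-p_h)$ solves a discrete Oseen problem whose right-hand side is precisely $(\u - \u_h,\v_h)$, so Lemma~\ref{lem:lemma 4.1.} yields
\[
|\!|\!|(\y_h(\u)-\y_h,\ p_h(\u)-p_h)|\!|\!| \lesssim \|\u - \u_h\|_0.
\]
Plugging in Lemma~\ref{lem: Lemma 4.5.} gives $\|\u-\u_h\|_0 \lesssim h\|\u\|_1$, so the second contribution is $\mathcal{O}(h\|\u\|_1)$. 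Adding the two bounds produces \eqref{4.59}.

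For the adjoint pair I would use the analogous splitting
\[
\w - \w_h = (\w - \w_h(\y)) + (\w_h(\y) - \w_h), \qquad r - r_h = (r - r_h(\y)) + (r_h(\y) - r_h).
\]
Again the first pair is bounded by Lemma~\ref{lem: Lemma 4.4.} with constant $C_a h\,(\|\w\|_2 + \|r\|_1)$. For the second pair, the inhomogeneity is the $L^2$ state error, so Lemma~\ref{lem:lemma 4.1.} gives
\[
|\!|\!|(\w_h(\y)-\w_h,\ r_h(\y)-r_h)|\!|\!| \lesssim \|\y - \y_h\|_0,
\]
and by Theorem~\ref{thm: Theorem 4.6.} this is already $\mathcal{O}(h^{2})$ (or $\mathcal{O}(h^{3/2})$ in the convection-dominated case), which is a higher-order contribution absorbed into the leading $\mathcal{O}(h)$ term; this justifies why \eqref{4.60} depends only on $\|\w\|_2 + \|r\|_1$ up to the $h$-independent constant.

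I do not expect any genuine obstacle: all the heavy lifting has already been done. The only subtle point is bookkeeping around the pressure component, namely verifying that the stability estimates of Lemma~\ref{lem:lemma 4.1.} truly deliver the $\nu^{-1/2}$-weighted pressure norm appearing on the left of \eqref{4.59}--\eqref{4.60}; this follows by inspecting the definition \eqref{3.8} of $|\!|\!|(\cdot,\cdot)|\!|\!|$, so the plan is simply to cite Lemmas \ref{lem:lemma 4.1.}, \ref{lem: Lemma 4.4.}, \ref{lem: Lemma 4.5.} and Theorem \ref{thm: Theorem 4.6.}, combine via the triangle inequality, and collect constants.
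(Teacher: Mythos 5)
Your proposal is correct and follows essentially the same route as the paper: the same splitting through the auxiliary pairs $(\y_h(\u),p_h(\u))$ and $(\w_h(\y),r_h(\y))$, with Lemma~\ref{lem: Lemma 4.4.} controlling the consistency part, Lemma~\ref{lem:lemma 4.1.} controlling the discrete perturbation, and Lemma~\ref{lem: Lemma 4.5.} (resp.\ Theorem~\ref{thm: Theorem 4.6.}) absorbing the control (resp.\ state) $L^2$ error. Your explicit remark that $\|\y-\y_h\|_0$ is higher order and hence disappears from \eqref{4.60} is exactly the step the paper leaves implicit.
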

		\begin{proof}
			Utilizing the triangle inequality and subsequently employing equations(\ref{4.2}, \ref{4.3}, \ref{4.21a}, and \ref{4.21b}), we can express:
			\begin{align*}
				|\!|\!|\y-\y_h|\!|\!|_{h} + \|\nu^{-1/2}(p-p_h)\|_{0} %
	&\le  C h \ (\|\y\|_{2}+ \|p\|_{1}) + C\|\u-\u_h\|_{0} \le  C h \ (\|\y\|_{2}+ \|p\|_{1}+\|\u\|_{1}),
			\end{align*}
			and
			\begin{align*}
				|\!|\!|\w-\w_h|\!|\!|_{h} + \|\nu^{-1/2}(r-r_h)\|_{0} %
	&\le C h \ (\|\w\|_{2}+ \|r\|_{1}) + C\|\y-\y_h\|_{0} \le C h \ (\|\w\|_{2}+ \|r\|_{1}).
			\end{align*}
\end{proof}
\section{A posteriori error estimates}\label{A POSTERIORI ERROR ESTIMATES.} \setcounter{equation}{0}
In this section, we develop and analyze a residual-based a posteriori error estimator for the solution of the discretized optimality system. The error in the velocity of the state and co-state is quantified using the norm $|\!|\!|\cdot|\!|\!|_h$ defined in (\ref{3.8}) and the semi-norm $|\cdot|_{A}$ defined by:
\begin{align}
\label{5.1}	|\y|_{A}^{2} &:= |\y \boldsymbol{\beta}^{T}|_{*}^{2} + \sum_{E \in \mathcal{E}(\mathcal{T}_h)} \bigg(\sigma h_{E}\|[\![\y]\!]\|_{0,E}^{2} + h_{E} \|[\![\nu^{-1/2}\y]\!]\|_{0,E}^{2} \bigg),
 \end{align}
where
	$|\mathbf{y}  \boldsymbol{\beta}^{T}|_{*} := \underset{\v \in \boldsymbol{H}_{0}^{1}(\Omega) \setminus \{0\}}{\sup} \frac{\int_{\Omega} \mathbf{y} \boldsymbol{\beta}^{T} : \nabla \v \ dx }{	|\!|\!|\mathbf{v}|\!|\!|_h}.$
 The expressions involving $|\y \boldsymbol{\beta}^{T}|^2_*$ and $h_E |[\nu^{-1/2} \y]|^2_{0,E}$ within the semi-norm $|\cdot|_A$ are used to approximate the convective derivative. Meanwhile, the term $\sigma h_E \|[\![\y]\!]\|_{0,E}^{2} $ is associated with the reaction term. We then introduce scaling factors denoted by $\rho_K$ and $\rho_E$ as follows.
\[
\rho_{K} = \begin{cases}
	\min \Big\{h_{K} \nu^{-1/2}, \kappa^{-1/2}\Big\} & \text{if } \kappa \neq 0 \\
	h_{K} \nu^{-\frac{1}{2}} & \text{if } \kappa = 0
\end{cases},
\quad
\rho_{E} = \begin{cases}
	\min \Big\{h_{E} \nu^{-1/2}, \kappa^{-1/2}\Big\} & \text{if } \kappa \neq 0 \\
	h_{E} \nu^{-\frac{1}{2}} & \text{if } \kappa = 0
\end{cases}.
\]
 Let  $\nu_h,\ \boldsymbol{\beta}_{h},\ \sigma_{h},\ \f_{h}$ and $\y_{d,h}$ represent the piecewise polynomial approximations of the viscosity coefficient $\nu$, the convective velocity field $\boldsymbol{\beta}$, the reaction term coefficient $\sigma$, souce function $\f$ and the desired velocity field $\y_d$, respectively.
 These approximations may exhibit discontinuities across elemental edges.
For a specific element $K\in\mathcal{T}_h$, and given values of $(\y_h,p_h,\w_h,r_h, \u_h)$ from the corresponding finite element spaces $\boldsymbol{V}_h \times Q_h \times \boldsymbol{V}_h \times Q_h \times \boldsymbol{U}_{ad,h}$, we define local error indicators named $\eta_{K}^{\y}$, $\eta_{K}^{\w}$, and $\eta_{K}^{\u}$ as
\begin{align*}
	(\eta_{K}^{\y})^{2} := (\eta_{R_K}^{\y})^{2}   + (\eta_{E_K}^{\y})^{2}   +  (\eta_{J_K}^{\y})^{2} , \quad \quad (\eta_{K}^{\w})^{2} := (\eta_{R_K}^{\w})^{2}   + (\eta_{E_K}^{\w})^{2}   +  (\eta_{J_K}^{\w})^{2}, \quad \quad
	(\eta_{K}^{\u})^{2} := (\eta_{R_K}^{\u})^{2} 
\end{align*} 
with the interior residual terms defined as
\begin{align*}
	\begin{cases}
		\big(\eta_{R_{K}}
		^{\y}\big)^{2} &:= \|\rho_{K}(\f_{h} + \u_h + \nabla \cdot (\nu_h \nabla \y_h) - (\boldsymbol{\beta}_{h} \cdot \nabla) \y_h - \nabla p_h - \sigma_h \y_h)\|_{0,K}^{2}, \\
	\big(\eta_{R_{K}}^{\w}\big)^{2} &:= \|\rho_{K}(\y_{h} - \y_{d,h}  + \nabla \cdot (\nu_h \nabla \y_h) + (\boldsymbol{\beta}_{h} \cdot \nabla) \w_h + \nabla r_h - (\sigma_h-\nabla \cdot \boldsymbol{\beta}) 		\w_h)\|_{0,K}^{2},\\
	(\eta_{R_K}^{\u})^{2}  &:= h_{K}^{2} \|\w_h + \lambda \u_h\|_{0,K}^{2},
	\end{cases} 
\end{align*}
edge residuals defined as
 	\begin{align*}
	\begin{cases}
		\big(\eta_{E_{K}}^{\y}\big)^{2} &:= \frac{1}{2} \sum \limits_{E \in \partial K \setminus \Gamma} \|[\![\rho_E^{1/2} \nu_h^{-1/4}(p_h I - \nu \nabla \y_h)\cdot \n]\!]\|_{0,E}^{2},\\
	\big(\eta_{E_{K}}^{\w}\big)^{2} &:= \frac{1}{2} \sum \limits_{E \in \partial K \setminus \Gamma}  \|[\![\rho_E^{1/2} \nu_h^{-1/4}(r_h I - \nu \nabla \w_h)\cdot \n]\!]\|_{0,E}^{2},
	\end{cases} 
\end{align*}
and the trace residuals defined as
 \begin{align*}
	\big(\eta_{J_{K}}^{\y}\big)^{2} &:= \frac{1}{2} \sum \limits_{E \in \partial K \setminus \Gamma} \Big(\frac{\gamma}{h_E} \|[\![\nu_h^{1/2} \y_h \otimes \n ]\!]\|_{0,E}^{2} + \kappa h_{E} \|[\![\y_h \otimes \n ]\!]\|_{0,E}^{2} + h_E \|[\![\nu_h^{-1/2} \y_h \otimes \n ]\!]\|_{0,E}^{2} \Big) \vspace{1mm}\\
	& \ \ \  \ + \sum \limits_{E \in \partial K \cap \Gamma} \Big(\frac{\gamma}{h_E} \|\nu_h^{1/2} \y_h\|_{0,E}^{2} + \kappa h_{E} \|\y_h\|_{0,E}^{2} + h_E \|\nu_h^{-1/2} \y_h\|_{0,E}^{2} \Big),\vspace{1mm} 
\end{align*}
 where $I$ is the $d \times d$ identity matrix with $d=2,3$, and $\big(\eta_{J_{K}}^{\w}\big)^{2} =\big(\eta_{J_{K}}^{\y}\big)^{2}|_{\y=\w}$. 
For $(\y_h, p_h, \w_h, r_h, \u_h) \in \boldsymbol{V}_h \times Q_h \times \boldsymbol{V}_h \times Q_h \times \boldsymbol{U}_{ad,h}$ and $K \in \mathcal{T}_h$, the \textbf{local data oscillation terms} $\Theta_{K}^{\y}$ and $\Theta_{K}^{\w}$ are defined as:
\begin{align*}
	(\Theta_{K}^{\y})^{2} &:= \big(\|\rho_{K}(\f-\f_h)\|_{0,K}^{2} + \|\nu^{-1/2}  (\nu-\nu_h)  \nabla \y_h\|_{0,K}^{2} + \| \rho_{K}((\boldsymbol{\beta}-\boldsymbol{\beta}_h) \cdot \nabla) \y_h \|_{0,K}^{2} + \| \rho_{K}(\sigma-\sigma_h)\y_h\|_{0,K}^{2}\big),\\
	(\Theta_{K}^{\w})^{2} &:= \big(\|\rho_{K}(\y_{d,h}-\y_d)\|_{0,K}^{2}  + \|\nu^{-1/2} (\nu-\nu_h)  \nabla \w_h\|_{0,K}^{2} + \|\rho_{K}((\boldsymbol{\beta}-\boldsymbol{\beta}_h) \cdot \nabla) \w_h \|_{0,K}^{2}\\
	& \ \ \ \  + \| \rho_{K}((\sigma - \nabla \cdot \boldsymbol{\beta})-(\sigma_h - \nabla \cdot \boldsymbol{\beta}_h))\w_h\|_{0,K}^{2}\big).
\end{align*} 
Finally, we define the \textbf{global error estimators} $\eta^{\y}, \eta^{\w}, \eta^{\u}$ and \textbf{data oscillation errors} $\Theta^{\y}$, $\Theta^{\w}$  as
\begin{align}
\label{5.3}	(\eta^{\y})^2 &:= \sum_{K \in \mathcal{T}_h}(\eta_{K}^{\y})^{2} , &&  (\eta^{\w})^2 := \sum_{K \in \mathcal{T}_h}(\eta_{K}^{\w})^{2}, \quad \quad \quad \quad \eta^{\u} := \sum_{K \in \mathcal{T}_h}(\eta_{K}^{\u})^{2} ,\\
\label{5.4}	 (\Theta^{\y})^2 &:= \sum_{K \in \mathcal{T}_h}(\Theta_{K}^{\y})^{2}, && (\Theta^{\w})^2 := \sum_{K \in \mathcal{T}_h}(\Theta_{K}^{\w})^{2}.
\end{align}
\subsection{Auxiliary forms and their properties.}
The discrete form, $a_h(\y,\v)$, is only well-defined for functions $\y$ and $\v$ that belong to both the Sobolev space $\boldsymbol{H}_{0}^{1}(\Omega)$ and the finite element space $\boldsymbol{V}_h$. This limitation arises when using functions solely from the Sobolev space $\boldsymbol{H}^1_0(\Omega)$. To overcome this, we employ a lifting operator as described in  \cite{EKGD}. Our method involves splitting the discrete form into multiple components, eliminating the requirement for continuity estimates in the consistency and symmetrization terms. To begin, we introduce the following forms:
\begin{align*}
	D_1(\y,\v) &= \sum_{K \in \mathcal{T}_h} \int_{K} (\nu \nabla \y : \nabla \v + (\sigma - \nabla \cdot \boldsymbol{\beta})\y \cdot \v) \ dx,\\
	D_2(\y,\v) &= - \sum_{K \in \mathcal{T}_h} \Bigg( \int \limits_{K}  \y \boldsymbol{\beta}^{T}: \nabla \mathbf{v} \ dx - \int \limits_{\partial K_{\text{out}} \cap \Gamma_{\text{out}}} (\boldsymbol{\beta} \cdot \boldsymbol{n}_{K}) \y \cdot \mathbf{v} ds - \int \limits_{\partial K_{\text{out}} \setminus \Gamma} (\boldsymbol{\beta} \cdot \boldsymbol{n}_{K}) \y \cdot (\mathbf{v}-\mathbf{v}^{e}) ds \Bigg),\\
	D_3(\y,\v) &= - \sum_{E \in \mathcal{E}(\mathcal{T}_h)} \int_{E}  \Big( \{\!\!\{\nu \nabla \mathbf{y}\}\!\!\} : [\![\v \otimes \mathbf{n}]\!] + \{\!\!\{\nu \nabla \mathbf{v}\}\!\!\} : [\![\y \otimes \mathbf{n}]\!] \Big) ds,\\
	D_4(\y,\v) &=  \sum_{E \in \mathcal{E}(\mathcal{T}_h)} \int_{E} \frac{\gamma}{h_E} [\![\nu^{1/2} \y \otimes \mathbf{n}]\!] : [\![\nu^{1/2} \v \otimes \mathbf{n}]\!] ds.
\end{align*}
The bilinear form $\tilde{\mathcal{A}}_{h}$ is specified as:
\begin{align*}
	\tilde{\mathcal{A}}_{h}(\y,\v) = D_1(\y,\v) + D_2(\y,\v) + D_4(\y,\v).
\end{align*}
This is well-defined for all $\y,\v \in \boldsymbol{V}(h) = \boldsymbol{V}_h +  \boldsymbol{H}_{0}^{1}(\Omega)$, and it satisfies:
\begin{align}
	\label{5.5}	\mathcal{A}_h(\y,p;\v,\phi) := \tilde{\mathcal{A}}_{h}(\y,\v) + D_3(\y,\v) + b_h(\v,p) + b_h(\y,\phi) = (\f+\u,\v).
\end{align}
\begin{lemma}\label{lem: Lemma 5.4.}
	The following estimates hold:
	\begin{align*}
		|D_1(\y,\v)| &\precsim |\!|\!|\y|\!|\!| |\!|\!|\v|\!|\!|,
		\quad & \y,\v \in \boldsymbol{V}(h), \\
		|D_4(\y,\v)| &\precsim |\!|\!|\y|\!|\!| |\!|\!|\v|\!|\!|, \quad & \y,\v \in \boldsymbol{V}(h), \\
		|D_2(\y,\v)| &\precsim |\y \boldsymbol{\beta}^{T}|_{*} |\!|\!|\v|\!|\!|, \quad & \y \in \boldsymbol{V}(h), \v \in  \boldsymbol{H}_{0}^{1}(\Omega),\\
		|\tilde{\boldsymbol{A}}_{h}(\y,\v) | &\precsim |\!|\!|\y|\!|\!| |\!|\!|\v|\!|\!|, \quad & \y,\v \in \boldsymbol{V}(h),\\
		|D_3(\y,\v)| &\precsim \gamma^{-1/2}  \bigg(\sum_{E \in \mathcal{E}(\mathcal{T}_h)} \int_{E} \frac{\gamma}{h_E} \|[\![\nu^{1/2} \y \otimes \mathbf{n}]\!]\|_{0,E}^{2} ds \bigg)^{1/2}|\!|\!|\v|\!|\!|, \; &\y \in \boldsymbol{V}(h), \  \ \v \in \boldsymbol{H}_{0}^{1}(\Omega) \cap \boldsymbol{V}_h.
	\end{align*}
\end{lemma}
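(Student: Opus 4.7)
The plan is to dispatch the five estimates in order, exploiting the block structure $\tilde{\mathcal{A}}_h = D_1 + D_2 + D_4$ and the fact that both $[\![\v\otimes\n]\!]$ and $\v - \v^e$ vanish identically when $\v \in \boldsymbol{H}_0^1(\Omega)$. The main tools will be the element-wise Cauchy--Schwarz inequality, the discrete trace inequality $\|w\|_{0,E}^2 \precsim h_E^{-1}\|w\|_{0,K}^2$ for finite element functions, the uniform assumption $\|\sigma - \nabla\cdot\boldsymbol{\beta}\|_{L^\infty(\Omega)} \le \kappa\xi$ from \eqref{A1}, and the very definitions of $|\!|\!|\cdot|\!|\!|_h$ and $|\,\cdot\,\boldsymbol{\beta}^T|_*$.

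The bounds on $D_1$, $D_4$, and on $D_2$ when $\v \in \boldsymbol{H}_0^1(\Omega)$ are essentially immediate. For $D_1$, a pointwise Cauchy--Schwarz on $\nu\nabla\y:\nabla\v$ together with $\|\sigma - \nabla\cdot\boldsymbol{\beta}\|_\infty\le\kappa\xi$ directly matches the two ingredients of $|\!|\!|\cdot|\!|\!|_h^2$. For $D_4$, the identity $\gamma_h^2 = \gamma\nu/h_E$ makes the edge integrand literally the inner product of $[\![\gamma_h\y\otimes\n]\!]$ with $[\![\gamma_h\v\otimes\n]\!]$, so a Cauchy--Schwarz on edges suffices. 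For the third estimate, continuity of $\v$ removes the interelement upwind contribution and $\v|_\Gamma = 0$ removes the $\Gamma_{\text{out}}$ integral, leaving $D_2(\y,\v) = -\int_\Omega \y\boldsymbol{\beta}^T:\nabla\v\,dx$, to which the definition of the $|\,\cdot\,\boldsymbol{\beta}^T|_*$ seminorm applies verbatim.

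Continuity of $\tilde{\mathcal{A}}_h$ requires, additionally, a bound on $D_2$ valid for all $\y,\v\in\boldsymbol{V}(h)$. The volume piece will be controlled by $\|\boldsymbol{\beta}\|_\infty\|\y\|_{0,K}\|\nabla\v\|_{0,K}$ and absorbed into $|\!|\!|\y|\!|\!|_h|\!|\!|\v|\!|\!|_h$ through factors of $\kappa^{-1/2}$ and $\nu_0^{-1/2}$; the $\Gamma_{\text{out}}$ boundary integral will be estimated by $\|\y\|_{0,\partial K\cap\Gamma}$ (sent into the volume via the discrete trace inequality) paired with $\|\v\|_{0,\partial K\cap\Gamma}$ (absorbed into $\|\gamma_h\v\otimes\n\|_{\mathcal{E}^b(\mathcal{T}_h)}$); and the interior upwind contribution is estimated analogously, with $\v-\v^e$ absorbed into $\|[\![\gamma_h\v\otimes\n]\!]\|_{\mathcal{E}^i(\mathcal{T}_h)}$. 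I expect this to be the main bookkeeping obstacle, since each surface term must be rescaled so that the resulting constant depends only on $\boldsymbol{\beta},\nu_0,\kappa,\gamma$ and not on $h$.

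The $D_3$ estimate is the subtlest quantitatively because of the explicit $\gamma^{-1/2}$ factor on the right-hand side. Since $\v\in\boldsymbol{H}_0^1(\Omega)\cap\boldsymbol{V}_h$ kills $[\![\v\otimes\n]\!]$ on every edge, only the symmetrization term $\{\!\{\nu\nabla\v\}\!\}:[\![\y\otimes\n]\!]$ survives. I plan to apply Cauchy--Schwarz on each edge after inserting the identity $(h_E/(\gamma\nu))^{1/2}\cdot(\gamma\nu/h_E)^{1/2}=1$; this decomposes $|D_3|$ into the target jump sum times the quantity $\bigl(\gamma^{-1}\sum_E h_E\nu\|\{\!\{\nabla\v\}\!\}\|_{0,E}^2\bigr)^{1/2}$, and the discrete trace inequality then bounds the latter by $\gamma^{-1/2}\|\nu^{1/2}\nabla\v\|_{\mathcal{T}_h}\le\gamma^{-1/2}|\!|\!|\v|\!|\!|_h$, which is exactly the factor advertised in the statement.
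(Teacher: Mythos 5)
Your proposal is correct and follows essentially the same route as the paper: Cauchy--Schwarz for $D_1$ and $D_4$, the definition of $|\y\,\boldsymbol{\beta}^{T}|_{*}$ for $D_2$ with continuous test functions, and Cauchy--Schwarz combined with the discrete (inverse) trace inequality for $D_3$, which is exactly how the $\gamma^{-1/2}$ factor arises. The one place you go beyond the paper is the continuity of $\tilde{\mathcal{A}}_h$ on all of $\boldsymbol{V}(h)$: the paper simply ``combines the first three bounds'' even though its $D_2$ estimate is stated only for $\v \in \boldsymbol{H}_{0}^{1}(\Omega)$, whereas you correctly identify that the surface contributions of $D_2$ for discontinuous $\v$ require a separate trace-inequality argument, absorbing them into the $\gamma_h$-weighted jump terms of $|\!|\!|\v|\!|\!|_h$.
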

\begin{proof}
The first and second bounds directly result from the Cauchy-Schwarz inequality. To establish the third bound, we utilize the definition of $|\y \boldsymbol{\beta}^{T}|_{*}$. The fourth bound is derived by combining the first three bounds. The final bound is obtained by employing the Cauchy-Schwarz inequality and the inverse trace estimate.
\end{proof}
\subsection{Reliability}
Initially, we establish a reliability estimate for the a posteriori error estimator of the optimal control problem. Reliability estimates of a posteriori error estimators play a crucial role in improving the accuracy, efficiency, and credibility of numerical simulations. First, we extend the stability result \cite[Lemma~ 4.5.]{AGO} to the case where the viscosity coefficient is variable.
\begin{lemma}\label{lem: Lemma 5.5.}
        For any $(\y, p) \in \boldsymbol{H}_{0}^{1}(\Omega) \times L_{0}^{2}(\Omega)$, there exists $(\v, \phi) \in \boldsymbol{H}_{0}^{1}(\Omega) \times L_{0}^{2}(\Omega)$ such that $|\!|\!|(\mathbf{v},\phi)|\!|\!| \le 1$ and:
 	\begin{align*}
 		\mathcal{A}_h(\y,p;\v,\phi) \succsim |\!|\!|(\mathbf{y},p)|\!|\!| + |\y \boldsymbol{\beta}^{T}|_{*}.
 	\end{align*}
\end{lemma}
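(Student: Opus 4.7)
The plan is to construct the pair $(\v,\phi) \in \boldsymbol{H}_0^1(\Omega) \times L_0^2(\Omega)$ as a weighted sum of three auxiliary test functions, one tuned to control each of $|\!|\!|\y|\!|\!|$, $\|\nu^{-1/2}p\|_0$, and $|\y\boldsymbol{\beta}^T|_*$, adapting the template of \cite{AGO} to variable viscosity. Because every test that I use lies in $\boldsymbol{H}_0^1(\Omega)$, all inter-element jumps vanish, so $D_3$ and $D_4$ contribute nothing, and throughout the argument $\mathcal{A}_h(\y,p;\v,\phi) = D_1(\y,\v) + D_2(\y,\v) + b_h(\v,p) + b_h(\y,\phi)$. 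The first ingredient is the diagonal pair $(\y,-p)$: the two $b_h$ pieces cancel, the boundary and skeletal contributions in $D_2$ drop because of the Dirichlet trace, and
\begin{align*}
\mathcal{A}_h(\y,p;\y,-p) \;=\; \|\nu^{1/2}\nabla\y\|_0^2 \;+\; \int_\Omega\!\Bigl(\sigma-\tfrac12\nabla\cdot\boldsymbol{\beta}\Bigr)|\y|^2\,dx \;\ge\; |\!|\!|\y|\!|\!|^2,
\end{align*}
where the lower bound uses assumption \eqref{A1}.

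Next, for the pressure I would invoke the continuous Stokes inf-sup condition (applied to $\nu^{-1}p$ after projecting off its mean) to produce $\v_p \in \boldsymbol{H}_0^1(\Omega)$ with $-(\nabla\cdot\v_p,p) = \|\nu^{-1/2}p\|_0^2$ and $|\!|\!|\v_p|\!|\!| \lesssim \|\nu^{-1/2}p\|_0$, whereupon Lemma~\ref{lem: Lemma 5.4.} yields $|D_1(\y,\v_p)+D_2(\y,\v_p)| \lesssim (|\!|\!|\y|\!|\!| + |\y\boldsymbol{\beta}^T|_*)|\!|\!|\v_p|\!|\!|$. For the convective dual norm, the very definition of $|\y\boldsymbol{\beta}^T|_*$ supplies a near-maximizer $\v_c^\star \in \boldsymbol{H}_0^1(\Omega)$ with $|\!|\!|\v_c^\star|\!|\!| = 1$ and $\int_\Omega \y\boldsymbol{\beta}^T{:}\nabla\v_c^\star\,dx \ge \tfrac12|\y\boldsymbol{\beta}^T|_*$; scaling by $\v_c = -|\y\boldsymbol{\beta}^T|_*\,\v_c^\star$ gives $D_2(\y,\v_c) \ge \tfrac12|\y\boldsymbol{\beta}^T|_*^2$ and $|\!|\!|\v_c|\!|\!| = |\y\boldsymbol{\beta}^T|_*$, with $|D_1(\y,\v_c)|$ and $|b_h(\v_c,p)|$ again controlled by Lemma~\ref{lem: Lemma 5.4.}.

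The combined test is then $(\v,\phi) = (\y + \theta_1\v_p + \theta_2\v_c,\,-p)$ with small positive parameters $\theta_1,\theta_2$ to be fixed. Expanding $\mathcal{A}_h(\y,p;\v,\phi)$ by linearity and absorbing every mixed term via Young's inequality, I choose $\theta_1,\theta_2$ so that the three principal contributions $|\!|\!|\y|\!|\!|^2$, $\theta_1\|\nu^{-1/2}p\|_0^2$, and $\tfrac12\theta_2|\y\boldsymbol{\beta}^T|_*^2$ survive. This produces $\mathcal{A}_h(\y,p;\v,\phi) \gtrsim |\!|\!|(\y,p)|\!|\!|^2 + |\y\boldsymbol{\beta}^T|_*^2$ together with $|\!|\!|(\v,\phi)|\!|\!| \lesssim |\!|\!|(\y,p)|\!|\!| + |\y\boldsymbol{\beta}^T|_*$; rescaling $(\v,\phi)$ by this upper bound delivers the pair of norm at most one satisfying $\mathcal{A}_h \gtrsim |\!|\!|(\y,p)|\!|\!| + |\y\boldsymbol{\beta}^T|_*$ asserted in the lemma. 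The main obstacle is the pressure step: because $\nu^{-1}p$ is generally not mean-zero, one must build $\v_p$ via a Bogovski\u{\i}-type right inverse applied to $\nu^{-1}p$ minus its mean, and the variable-viscosity bookkeeping has to be tracked carefully so that the constant in $|\!|\!|\v_p|\!|\!| \lesssim \|\nu^{-1/2}p\|_0$ remains under control (and ideally robust in $\nu^{-1}$); once $\v_p$ is in hand, the remainder is routine Cauchy--Schwarz/Young inequality work.
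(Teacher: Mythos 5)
Your construction is correct and is exactly the argument the paper compresses into one sentence: coercivity via the diagonal pair $(\y,-p)$, the continuous inf-sup (Bogovski\u{\i}) condition for the pressure, a near-maximizer for the dual norm $|\y\boldsymbol{\beta}^T|_*$, and continuity of the auxiliary forms to absorb the cross terms before rescaling. The paper simply cites ``inf-sup, continuity, coercivity'' (extending \cite[Lemma~4.5]{AGO} to variable $\nu$), so your proposal fills in the same route in detail.
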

\begin{proof}
By applying the inf-sup condition along with continuity and coercivity, and considering the definition of norms, we derive the stated result.
 \end{proof}
To establish the reliability estimate (for the state and co-state problem), we adopt the approach outlined in \cite[Section~4]{AGO} and uniquely decompose the DG velocity approximation into:
\begin{align*}
	\y_h &= \y_h^{c} + \y_h^{r},\quad
	\w_h = \w_h^{c} + \w_h^{r},
\end{align*}
where $\y_h^{c},\w_h^{c} \in \boldsymbol{V}_{h}^{c}$ and $\y_h^{r},\w_h^{r} \in \boldsymbol{V}_{h}^{\perp}$. As $\y_h, \w_h \in \boldsymbol{V}_h$ and $\y_h^{c}, \w_h^{c} \in \boldsymbol{V}_{h}^{c} \subset \boldsymbol{V}_{h}$, so $\y_h^{r} = \y_h - \y_h^{c} \in \boldsymbol{V}_{h}$ and $\w_h^{r} = \w_h - \w_h^{c} \in \boldsymbol{V}_{h}$.\\
By employing the triangle inequality, we can express:
\begin{align}
	\label{5.6a} 	|\!|\!|\y-\y_h|\!|\!|_h + |\y-\y_h|_{A} &\le |\!|\!|\y-\y_h^{c}|\!|\!|_h +  |\y-\y_h^{c}|_{A} + |\!|\!|\y_h^{r}|\!|\!|_h +  |\y_h^{r}|_{A},\\
	\label{5.6b} 	|\!|\!|\w-\w_h|\!|\!|_h + |\w-\w_h|_{A} &\le |\!|\!|\w-\w_h^{c}|\!|\!|_h +  |\w-\w_h^{c}|_{A} + |\!|\!|\w_h^{r}|\!|\!|_h +  |\w_h^{r}|_{A}.
\end{align}
In the upcoming results, we illustrate that the continuous errors, the remaining terms, and the pressure errors can be reliably constrained by the global error estimators and the data oscillation errors associated with both the state and co-state. Firstly, we establish an upper bound for the remaining terms.
\begin{lemma}\label{lem: Lemma 5.6.}
	For the remaining terms $\y_h^{r}$ and $\w_h^{r}$, the following estimates hold:
	\begin{align}
		\label{5.7} 	|\!|\!|\y_h^{r}|\!|\!|_h +  |\y_h^{r}|_{A} \le \eta^{\y},\quad
			|\!|\!|\w_h^{r}|\!|\!|_h +  |\w_h^{r}|_{A} \le \eta^{\w}.
	\end{align}
\end{lemma}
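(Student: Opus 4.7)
\medskip

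\noindent\textbf{Proof proposal.} The plan is to use the orthogonal decomposition $\y_h=\y_h^{c}+\y_h^{r}$ where $\y_h^{c}\in\boldsymbol{V}_h^{c}:=\boldsymbol{V}_h\cap\boldsymbol{H}_0^{1}(\Omega)$ is the conforming part selected by an averaging/enrichment operator and $\y_h^{r}$ is the non-conforming remainder. Following the approach of \cite{AGO}, the operator is chosen so that $\y_h^{c}$ agrees with $\y_h$ at interior degrees of freedom shared by neighbouring elements, which implies the standard approximation property
\begin{align*}
\sum_{K\in\mathcal{T}_h}\left(h_K^{-2}\|\y_h^{r}\|_{0,K}^{2}+\|\nabla\y_h^{r}\|_{0,K}^{2}\right)\lesssim \sum_{E\in\mathcal{E}(\mathcal{T}_h)} h_E^{-1}\|[\![\y_h\otimes\n]\!]\|_{0,E}^{2},
\end{align*}
where on boundary edges $E\subset\Gamma$ the jump is read as the trace (since $\y_h^{c}$ vanishes there). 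The key observation is that jumps of $\y_h^{r}=\y_h-\y_h^{c}$ across any edge equal the jumps of $\y_h$, because $\y_h^{c}$ is continuous.

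From this single inequality I will bound each ingredient of $|\!|\!|\y_h^{r}|\!|\!|_h$ separately. The viscous gradient term $\|\nu^{1/2}\nabla\y_h^{r}\|_{\mathcal{T}_h}^{2}$ is dominated by $\sum_E (\nu/h_E)\|[\![\y_h\otimes\n]\!]\|_{0,E}^{2}$, which is exactly the first piece of $\eta_{J_K}^{\y}$ (up to the penalty constant $\gamma$). The jump contributions $\|\gamma_h[\![\y_h^{r}\otimes\n]\!]\|_{\mathcal{E}^i}^{2}$ and $\|\gamma_h\y_h^{r}\otimes\n\|_{\mathcal{E}^b}^{2}$ are identically the first term of $\eta_{J_K}^{\y}$, and the reaction term $\kappa\|\y_h^{r}\|_{\mathcal{T}_h}^{2}$ is handled by combining the $L^{2}$ bound with the factor $h_E^{2}\cdot h_E^{-1}=h_E$, producing the $\kappa h_E\|[\![\y_h\otimes\n]\!]\|_{0,E}^{2}$ entry of $\eta_{J_K}^{\y}$.

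For the seminorm $|\y_h^{r}|_A$, the edge pieces $\sigma h_E\|[\![\y_h^{r}]\!]\|_{0,E}^{2}$ and $h_E\|[\![\nu^{-1/2}\y_h^{r}]\!]\|_{0,E}^{2}$ again reduce to jumps of $\y_h$ and are matched directly by the remaining summands in $\eta_{J_K}^{\y}$. The dual norm $|\y_h^{r}\boldsymbol{\beta}^{T}|_{*}$ is the technically delicate step: by definition it equals the supremum of $\int_\Omega \y_h^{r}\boldsymbol{\beta}^{T}:\nabla\v\,dx$ over $\v\in\boldsymbol{H}_0^{1}(\Omega)$ with $|\!|\!|\v|\!|\!|_h\le 1$. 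I will estimate this using integration by parts elementwise, bounding the resulting volume terms through $\|\y_h^{r}\|_{0,K}$ and the face terms through the normal jumps $[\![\y_h\otimes\n]\!]$ weighted by trace inequalities, and then invoking the same approximation estimate above together with $\|\boldsymbol{\beta}\|_{L^\infty}$ and the definition of $\rho_E$. This is where I expect the main technical obstacle, since the weights have to be balanced carefully against the mixed $h_E/\nu$ scaling appearing in the $h_E\|[\![\nu_h^{-1/2}\y_h\otimes\n]\!]\|_{0,E}^{2}$ contribution of $\eta_{J_K}^{\y}$ so that the resulting bound is robust in the convection-dominated regime.

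Summing the preceding element- and edge-wise bounds over $\mathcal{T}_h$ yields $|\!|\!|\y_h^{r}|\!|\!|_h+|\y_h^{r}|_A\lesssim\eta^{\y}$. The analogous bound for $\w_h^{r}$ is obtained by repeating exactly the same argument with $\w_h$ in place of $\y_h$ and $\eta^{\w}$ in place of $\eta^{\y}$, since the decomposition $\w_h=\w_h^{c}+\w_h^{r}$, the norms, and the jump-based estimator $\eta_{J_K}^{\w}$ are defined symmetrically for the adjoint velocity.
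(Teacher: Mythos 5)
Your proposal is correct and follows essentially the same route as the paper, whose proof simply invokes \cite[Lemma~4.6]{AGO} — precisely the approximation estimate for the nonconforming remainder $\y_h^{r}=\y_h-\y_h^{c}$ in terms of the edge jumps of $\y_h$ that you reconstruct — and then identifies those jump terms with the trace residuals $\eta_{J_K}^{\y}$ and $\eta_{J_K}^{\w}$. One small simplification for the step you flag as delicate: the dual norm $|\y_h^{r}\boldsymbol{\beta}^{T}|_{*}$ can be bounded directly by $\|\boldsymbol{\beta}\|_{L^{\infty}(\Omega)}\,\|\nu^{-1/2}\y_h^{r}\|_{0}$ via Cauchy--Schwarz against $\|\nu^{1/2}\nabla\v\|_{0}\le|\!|\!|\v|\!|\!|_h$, after which the $L^{2}$ bound on $\y_h^{r}$ already gives the $h_E\|[\![\nu_h^{-1/2}\y_h\otimes\n]\!]\|_{0,E}^{2}$ contribution of $\eta_{J_K}^{\y}$, so the elementwise integration by parts is not needed.
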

\begin{proof}
Employing \cite[Lemma~4.6]{AGO} and the definition of the trace residuals $\eta_{J_{K}}^{\y}$ and $\eta_{J_{K}}^{\w}$ implies the stated result.
\end{proof}
We will now establish an upper bound for both the continuous error terms and the pressure error terms.
\begin{lemma}\label{lem: Lemma 5.7.}
The following estimates
\begin{align}
	\label{5.13} 	\int_{\Omega} (\f+\u_h) \cdot (\v - \boldsymbol{I}_h \v) dx - \tilde{\mathcal{A}}_h (\y_h,\v - \boldsymbol{I}_h \v) - b_h(\v - \boldsymbol{I}_h \v,p_h) \precsim (\eta^{\y} + \Theta^{\y})|\!|\!|\mathbf{v}|\!|\!|, \\
	\label{5.14} 	\int_{\Omega} (\y_{d}-\y_h) \cdot (\z - \boldsymbol{I}_h \z) dx - \tilde{\mathcal{A}}_h (\w_h,\z - \boldsymbol{I}_h \z) + b_h(\z - \boldsymbol{I}_h \z,r_h) \precsim (\eta^{\w} + \Theta^{\w})|\!|\!|\mathbf{z}|\!|\!|,
\end{align}
hold for all $\v, \z \ \text{in} \ \boldsymbol{V}$, where $\boldsymbol{I}_h$ represents the standard Scott-Zhang interpolant discussed in \cite{GFB}, which adheres to the following estimates:
\begin{align}
	\label{5.15} 	\bigg(\sum_{K \in \mathcal{T}_h} \|\rho_{K}^{-1}(\v - \boldsymbol{I}_h 		\v)\|_{0,K}^{2}\bigg)^{1/2} &\precsim |\!|\!|\mathbf{v}|\!|\!|, \quad
	\bigg(\sum_{E \in \mathcal{E}^{i}(\mathcal{T}_h)} \|[\![\rho_{E}^{-1/2} \nu^{1/4} (\v - \boldsymbol{I}_h \v)]\!]\|_{0,E}^{2} \bigg)^{1/2} \precsim |\!|\!|\mathbf{v}|\!|\!|. 
\end{align}
\end{lemma}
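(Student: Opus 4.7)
The plan is to integrate by parts elementwise on the left-hand side of (5.13), collect bulk residuals matching $\eta_{R_K}^{\y}$ and edge/jump residuals matching $\eta_{E_K}^{\y}$ and $\eta_{J_K}^{\y}$ (with the coefficient swap producing the data oscillations $\Theta_K^{\y}$), and close the estimate by a weighted elementwise Cauchy--Schwarz together with the Scott--Zhang estimates (5.15). The latter are a standard consequence of the construction of $\boldsymbol{I}_h$ in \cite{GFB} combined with a local trace inequality and the definition of $\rho_K,\rho_E$ as the $h$-scaled minima of $\nu^{-1/2}$ and $\kappa^{-1/2}$; I would record them directly from that reference.

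The first step is to use $\v - \boldsymbol{I}_h \v \in \boldsymbol{H}_0^1(\Omega)$ to simplify $\tilde{\mathcal{A}}_h(\y_h,\v - \boldsymbol{I}_h \v)$: since jumps of continuous functions vanish, $D_4$ drops out entirely, the ``$\v-\v^e$'' outflow contribution in $D_2$ collapses, and the vanishing trace on $\Gamma$ kills the $\Gamma_{\text{out}}$ integral in $D_2$. Only the bulk pieces of $D_1$ and $D_2$ remain. Elementwise integration by parts then yields, in the interior, the residual $\f+\u_h+\nabla\cdot(\nu\nabla\y_h)-(\boldsymbol{\beta}\cdot\nabla)\y_h-\sigma\y_h-\nabla p_h$ tested against $\v-\boldsymbol{I}_h\v$ (the $(\nabla\cdot\boldsymbol{\beta})\y_h$ produced by the convective integration by parts exactly cancels the $-(\nabla\cdot\boldsymbol{\beta})\y_h$ already in $D_1$), plus edge contributions
\begin{align*}
\sum_{K \in \mathcal{T}_h} \int_{\partial K} \big[ -(\nu\nabla\y_h - p_h I)\n + (\boldsymbol{\beta}\cdot\n)\y_h\big]\cdot (\v - \boldsymbol{I}_h\v)\, ds.
\end{align*}
Adding and subtracting the piecewise approximants $\f_h,\nu_h,\boldsymbol{\beta}_h,\sigma_h$ in the bulk integrand isolates $\eta_{R_K}^{\y}$ from the data oscillations $\Theta_K^{\y}$; regrouping the edge sum by face, the interior-edge jumps of $(\nu\nabla\y_h-p_h I)\n$ feed into $\eta_{E_K}^{\y}$, while the convective jump $(\boldsymbol{\beta}\cdot\n)[\![\y_h]\!]$ is controlled by the $h_E\|[\![\nu_h^{-1/2}\y_h\otimes\n]\!]\|_{0,E}^2$ piece of $\eta_{J_K}^{\y}$ (recall $\boldsymbol{\beta}\in W^{1,\infty}(\Omega)$).

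I then close with Cauchy--Schwarz applied termwise, pairing $\|\rho_K(\cdot)\|_{0,K}$ with $\|\rho_K^{-1}(\v-\boldsymbol{I}_h\v)\|_{0,K}$ in the bulk and $\|\rho_E^{1/2}\nu_h^{-1/4}[\![\cdot]\!]\|_{0,E}$ with $\|\rho_E^{-1/2}\nu_h^{1/4}[\![\v-\boldsymbol{I}_h\v]\!]\|_{0,E}$ on the edges; the companion factors are bounded by $|\!|\!|\v|\!|\!|$ via (5.15), giving the desired $(\eta^{\y}+\Theta^{\y})\,|\!|\!|\v|\!|\!|$. Estimate (5.14) follows from the identical argument applied to the adjoint identity with $\y_h - \y_{d,h}$ in place of $\f+\u_h$, $\w_h$ in place of $\y_h$, the sign of the convection reversed (which is precisely why $(\boldsymbol{\beta}\cdot\nabla)\w_h$ enters the residual $\eta_{R_K}^{\w}$ with a plus sign), and $r_h$ replacing $p_h$ (which also absorbs the sign flip on $b_h$ in the statement). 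The main obstacle is transparently handling the asymmetric convective terms in $D_2$: one must simultaneously verify the vanishing of the ``$\v-\v^e$'' and $\Gamma_{\text{out}}$ pieces for continuous boundary-vanishing test functions, the cancellation of $\nabla\cdot\boldsymbol{\beta}$ between $D_1$ and the integration by parts of the convective term, and the absorption of the leftover convective jump $(\boldsymbol{\beta}\cdot\n)[\![\y_h]\!]$ into the already-present trace residual $\eta_{J_K}^{\y}$ rather than a new unbounded quantity.
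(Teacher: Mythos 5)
Your proposal follows essentially the same route as the paper: elementwise integration by parts of $\tilde{\mathcal{A}}_h(\y_h,\v-\boldsymbol{I}_h\v)+b_h(\v-\boldsymbol{I}_h\v,p_h)$, splitting the result into the bulk residual (matched to $\eta_{R_K}^{\y}$ after adding and subtracting the data approximants to produce $\Theta_K^{\y}$), the interior-edge jump of $(p_hI-\nu\nabla\y_h)\cdot\n$ (matched to $\eta_{E_K}^{\y}$), and the convective jump of $\y_h$ (matched to $\eta_{J_K}^{\y}$), then closing with weighted Cauchy--Schwarz and the Scott--Zhang bounds (\ref{5.15}). You are in fact more explicit than the paper about why $D_4$ and the boundary/outflow pieces of $D_2$ vanish for the continuous test function and about the $\nabla\cdot\boldsymbol{\beta}$ cancellation, but the argument is the same.
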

\begin{proof}
We commence with the definition of
\begin{align*}
	T = \int_{\Omega} (\f+\u_h) \cdot (\v - \boldsymbol{I}_h \v) dx - \tilde{\mathcal{A}}_h (\y_h,\v - \boldsymbol{I}_h \v) - b_h(\v - \boldsymbol{I}_h \v,p_h).
\end{align*}
Employing integration by parts, we obtain
\begin{align*}
	T &= \underbrace{\sum_{K \in \mathcal{T}_h} \int_{\Omega} (\f + \u_h + \nabla \cdot (\nu \nabla \y_h) - (\boldsymbol{\beta} \cdot \nabla) \y_h - \sigma \y_h - \nabla p_h)\cdot (\v - \boldsymbol{I}_h \v) dx}_{T_1} \\
	& \ \ \ \ +\underbrace{ {\sum_{K \in \mathcal{T}_h} \int_{\partial K} ((p_h I- \nu \nabla \y_h)\cdot \mathbf{n}_K)\cdot (\v - \boldsymbol{I}_h \v) ds}}_{T_2} + \underbrace{{\sum_{K \in \mathcal{T}_h} \int_{\partial K_{\text{in}} \setminus \Gamma} \boldsymbol{\beta}\cdot \mathbf{n}_K (\y_h-\y_h^{e})\cdot (\v - \boldsymbol{I}_h \v) ds}}_{T_3}.
\end{align*}
Initially, we incorporate both addition and subtraction of data approximation terms into $T_1$.  
Subsequently, by applying the Cauchy-Schwarz inequality and utilizing (\ref{5.15}), we obtain:
\begin{align*}
	T_1 
	&\precsim \bigg(\sum_{K \in \mathcal{T}_h} ((\eta_{R_{K}}^{\y})^{2} + (\Theta_{K}^{\y})^{2})\bigg)^{1/2} |\!|\!|\mathbf{v}|\!|\!|.
\end{align*}
The second term $T_2$ can be expressed as follows:
\begin{align*}
	T_2 = \sum_{E \in \mathcal{E}^{i}(\mathcal{T}_h)} \int_{E} [\![(p_h I- \nu \nabla \y_h)\cdot \n]\!] \cdot (\v - \boldsymbol{I}_h \v) ds.
\end{align*}
Applying a similar rationale, we obtain:
\begin{align}
	T_2 
\label{5.18} 	&\precsim \bigg(\sum_{E \in \mathcal{E}^{i}(\mathcal{T}_h)} (\eta_{E_{K}}^{\y})^{2} \bigg)^{1/2} |\!|\!|\mathbf{v}|\!|\!|.
\end{align}
Applying a comparable reasoning to $T_3$ yields:
\begin{align}
	T_3 
\label{5.19} 	&\precsim \bigg(\sum_{E \in \mathcal{E}^{i}(\mathcal{T}_h)} (\eta_{J_{K}}^{\y})^{2} \bigg)^{1/2} |\!|\!|\mathbf{v}|\!|\!|.
\end{align}
Combining the estimates for $T_1, T_2$, and $T_3$, we achieve the desired result. The second estimate for the co-state follows a similar line of reasoning.
\end{proof}
\begin{lemma}\label{lem: Lemma 5.8.}
	The following estimates holds
	\begin{align}
	\label{5.20} 	|\!|\!|\y-\y_h^{c}|\!|\!|_h +  |\y-\y_h^{c}|_{A} + \|\nu^{-1/2} (p-p_h)\|_{\mathcal{T}_h} &\precsim \eta^{\y} + \Theta^{\y},\\
	\label{5.21} 	|\!|\!|\w-\w_h^{c}|\!|\!|_h +  |\w-\w_h^{c}|_{A} + \|\nu^{-1/2} (r-r_h)\|_{\mathcal{T}_h} &\precsim \eta^{\w} + \Theta^{\w}.
	\end{align}
\end{lemma}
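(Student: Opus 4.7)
The plan is to apply the continuous stability result of Lemma~\ref{lem: Lemma 5.5.} to the conforming error pair $(\y-\y_h^{c},\,p-p_h)\in \boldsymbol{H}_{0}^{1}(\Omega)\times L_{0}^{2}(\Omega)$. This produces a test pair $(\v,\phi)$ with $|\!|\!|(\v,\phi)|\!|\!|\le 1$ such that
\begin{align*}
|\!|\!|(\y-\y_h^{c},\,p-p_h)|\!|\!| + |(\y-\y_h^{c})\boldsymbol{\beta}^{T}|_{*} \precsim \mathcal{A}_h(\y-\y_h^{c},\,p-p_h;\v,\phi).
\end{align*}
Since $\y-\y_h^{c}\in\boldsymbol{H}_0^1(\Omega)$ has vanishing interior and boundary jumps, the jump contributions inside the semi-norm $|\cdot|_{A}$ drop out on it, so $|\y-\y_h^{c}|_{A}=|(\y-\y_h^{c})\boldsymbol{\beta}^{T}|_{*}$. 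Hence the full left-hand side of (\ref{5.20}) is controlled by the bilinear form on the right.

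The main step is to rewrite that bilinear form. Using the continuous identity $\mathcal{A}_h(\y,p;\v,\phi)=(\f+\u,\v)$ from (\ref{5.5}) together with $\y_h^{c}=\y_h-\y_h^{r}$, I would obtain
\begin{align*}
\mathcal{A}_h(\y-\y_h^{c},\,p-p_h;\v,\phi) = (\f+\u,\v)-\mathcal{A}_h(\y_h,p_h;\v,\phi)+\mathcal{A}_h(\y_h^{r},0;\v,\phi).
\end{align*}
Into the middle term I would insert the Scott-Zhang interpolant $\boldsymbol{I}_h\v$, which is $\boldsymbol{H}^{1}$-conforming Lagrange and hence lies in $\boldsymbol{V}_h$ (since continuous $\boldsymbol{P}_k$ velocities sit inside $\text{BDM}_k$), together with the $L^{2}$-projection $\Pi_h\phi\in Q_h$. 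Using $\nabla\cdot\y_h=0$ (which annihilates $b_h(\y_h,\phi-\Pi_h\phi)$) and the identity $a_h+O_h=\tilde{\mathcal{A}}_h+D_3$ on discrete arguments, the discrete state equation (\ref{3.1a})--(\ref{3.1b}) turns into $\mathcal{A}_h(\y_h,p_h;\boldsymbol{I}_h\v,\Pi_h\phi)=(\f+\u_h,\boldsymbol{I}_h\v)$. Routine regrouping then yields
\begin{align*}
\mathcal{A}_h(\y-\y_h^{c},\,p-p_h;\v,\phi) = R(\v-\boldsymbol{I}_h\v) + (\u-\u_h,\v) - D_3(\y_h,\v-\boldsymbol{I}_h\v) + \mathcal{A}_h(\y_h^{r},0;\v,\phi),
\end{align*}
with $R(\v-\boldsymbol{I}_h\v):=(\f+\u_h,\v-\boldsymbol{I}_h\v)-\tilde{\mathcal{A}}_h(\y_h,\v-\boldsymbol{I}_h\v)-b_h(\v-\boldsymbol{I}_h\v,p_h)$.

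Each of the four pieces is then bounded separately. Lemma~\ref{lem: Lemma 5.7.} supplies $|R(\v-\boldsymbol{I}_h\v)|\precsim(\eta^{\y}+\Theta^{\y})|\!|\!|\v|\!|\!|$. For the consistency term I would use that $[\![\y_h\otimes\n]\!]=[\![\y_h^{r}\otimes\n]\!]$ (the conforming part carries no jumps), combined with a weighted Cauchy-Schwarz on edges and Scott-Zhang/inverse-trace stability, to bound $D_3(\y_h,\v-\boldsymbol{I}_h\v)$ by $\eta_{J_K}^{\y}\,|\!|\!|\v|\!|\!|$. The remainder $\mathcal{A}_h(\y_h^{r},0;\v,\phi)$ is bounded by combining the continuity estimates of Lemma~\ref{lem: Lemma 5.4.} with Lemma~\ref{lem: Lemma 5.6.}, which already ties $|\!|\!|\y_h^{r}|\!|\!|_h+|\y_h^{r}|_{A}$ to $\eta^{\y}$. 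The inner product $(\u-\u_h,\v)$ is the control-discretization defect, of size $\le \|\u-\u_h\|_{0}\,|\!|\!|\v|\!|\!|$, and is absorbed into the separate control estimator $\eta^{\u}$ in the global reliability bound. Estimate (\ref{5.21}) follows by the same argument applied to the co-state system (\ref{3.1c})--(\ref{3.1d}), with load $\y_h-\y_d$ replacing $\f+\u$ and the sign on the $D_2$ convection term transposed. The main delicate point is the triple occurrence of $D_3$ --- inside the stable form $\mathcal{A}_h$, inside the discrete operator through $a_h+O_h$, and as a consistency defect produced by Scott-Zhang interpolation --- and verifying that the uncancelled residue reduces exactly to $D_3(\y_h,\v-\boldsymbol{I}_h\v)$, which depends only on the non-conforming jumps already measured by $\eta_{J_K}^{\y}$.
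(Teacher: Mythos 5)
Your proposal takes essentially the same route as the paper: apply the stability result of Lemma~\ref{lem: Lemma 5.5.} to the conforming error pair $(\y-\y_h^{c},p-p_h)$, subtract the discrete equation tested with the Scott--Zhang interpolant $\boldsymbol{I}_h\v$, and bound the resulting residual, remainder, and consistency pieces via Lemmas~\ref{lem: Lemma 5.7.}, \ref{lem: Lemma 5.4.} and \ref{lem: Lemma 5.6.}. Two bookkeeping points deserve care, though. First, your final consistency residue $-D_3(\y_h,\v-\boldsymbol{I}_h\v)$ cannot be estimated as it stands, since $\{\!\!\{\nu\nabla\v\}\!\!\}$ has no $L^2$ edge trace for $\v$ merely in $\boldsymbol{H}_{0}^{1}(\Omega)$; you must first cancel it against the $D_3(\y_h^{r},\v)$ contribution hiding inside your term $\mathcal{A}_h(\y_h^{r},0;\v,\phi)$ (using $D_3(\y_h^{c},\v)=0$), after which the surviving consistency defect is $D_3(\y_h,\boldsymbol{I}_h\v)$ --- precisely the form the paper isolates as $T_3$ and controls with the last estimate of Lemma~\ref{lem: Lemma 5.4.}, which is stated only for second arguments in $\boldsymbol{H}_{0}^{1}(\Omega)\cap\boldsymbol{V}_h$. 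You flag this cancellation as the delicate point but resolve it to the wrong residue, so the verification step still needs to be carried out. Second, your explicit $(\u-\u_h,\v)$ term means the bound you actually prove carries an extra $\|\u-\u_h\|_{0}$ on the right-hand side of (\ref{5.20}), which is absent from the statement; the paper sidesteps this by writing its Galerkin identity (\ref{5.23n}) with $\u$ in place of $\u_h$ on the discrete side and reconciling the control mismatch only later in Theorem~\ref{thm: Theorem 5.11.}. Your accounting is arguably the more honest of the two, but it proves a slightly weaker statement than the lemma as written.
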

\begin{proof}
As $|\y-\y_h^{c}|_{A} = |\y-\y_h^{c}|_{*}$ and $(\y-\y_h^{c},p-p_h) \in \boldsymbol{H}_{0}^{1}(\Omega) \times L_{0}^{2}(\Omega)$, employing Lemma-\ref{lem: Lemma 5.5.}, we obtain
\begin{align}
	\label{5.22}  	|\!|\!|\y-\y_h^{c}|\!|\!|_h +  |\y-\y_h^{c}|_{*} + \|\nu^{-1/2} (p-p_h)\|_{\mathcal{T}_h} \precsim	\mathcal{A}_{h}(\y-\y_h^{c},p-p_h;\v,\phi)
\end{align}
for some $(\v,\phi)\in \boldsymbol{H}_{0}^{1}(\Omega) \times L_{0}^{2}(\Omega)$ with 	$|\!|\!|(\v,\phi)|\!|\!| \le 1$.\\
Using 
\begin{align}
\label{5.23n}	0 &= - \int_{\Omega} (\f+\u) \cdot \boldsymbol{I}_h \v dx + \mathcal{A}_h(\y_h,p_h;\boldsymbol{I}_h \v,0) ,
\end{align}
we find that:
\begin{align}
	\mathcal{A}_{h}(\y-\y_h^{c},p-p_h;\v,\phi) &= \underbrace{{\int_{\Omega} (\f+\u) \cdot (\v-\boldsymbol{I}_h \v) dx - \tilde{\boldsymbol{A}}_h(\y_h,\v-\boldsymbol{I}_h \v) - b_h(\v-\boldsymbol{I}_h \v,p_h)}}_{T_1}\nonumber\\
	& \label{5.25}\ \ \ + \underbrace{{D_1(\y_h^{r},\v) + D_4(\y_h^{r},\v) + D_2(\y_h^{r},\v) + b_h(\y_h^{r},\phi)}}_{T_2}+ \underbrace{D_3(\y_h,\boldsymbol{I}_h \v)}_{T_3}.
\end{align}
By utilizing the estimate (\ref{5.13}), we get:
\begin{align*}
	T_1 \precsim (\eta^{\y} + \Theta^{\y})|\!|\!|\mathbf{v}|\!|\!|.
\end{align*}
By employing Lemmas-\ref{lem: Lemma 5.4.} and \ref{lem: Lemma 5.6.}, in addition to applying the Cauchy-Schwarz inequality for the second term, we obtain:
\begin{align*}
	T_2 \precsim \eta^{\y} |\!|\!|(\mathbf{v},\phi)|\!|\!|.
\end{align*}
The final estimate in Lemma-\ref{lem: Lemma 5.4.} yields:
\begin{align*}
	T_3 \precsim \gamma^{-1/2} \bigg(\sum_{K \in \mathcal{T}_h} (\eta_{J_{K}})^{2}\bigg)^{1/2} |\!|\!|\mathbf{v}|\!|\!|.
\end{align*}
By incorporating the three bounds for $T_1, T_2$, and $T_3$, along with (\ref{5.25}), into (\ref{5.22}), we achieve the desired estimate (\ref{5.20}). Similar reasoning allows for the derivation of the second estimate.
\end{proof}
\begin{lemma}\label{lem: Lemma 5.9.}
Suppose $(\y,p)$, $(\w,r)$ be solutions of (\ref{2.13a}-\ref{2.13b}), (\ref{2.13c}-\ref{2.13d}), respectively, and $(\y_h,p_h)$, $(\w_h,r_h)$ be solutions of (\ref{3.1a}-\ref{3.1b}), (\ref{3.1c}-\ref{3.1d}), respectively. Then,
\begin{align}
\label{5.26} 	|\!|\!|\y-\y_h|\!|\!|_h + |\y-\y_h|_{A} + \|\nu^{-1/2}(p-p_h)\|_{0} &\precsim \eta^{y} + \Theta^{y},\\
\label{5.27} 	|\!|\!|\w-\w_h|\!|\!|_h + |\w-\w_h|_{A} + \|\nu^{-1/2}(r-r_h)\|_{0} &\precsim \eta^{w} + \Theta^{w}.
\end{align}
\end{lemma}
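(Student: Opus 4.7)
The plan is to obtain the reliability estimates \eqref{5.26}--\eqref{5.27} by simply assembling the pieces already proved in Lemma~\ref{lem: Lemma 5.6.} and Lemma~\ref{lem: Lemma 5.8.} via the triangle-inequality decompositions \eqref{5.6a}--\eqref{5.6b}. No new analytic machinery should be needed; the whole point of having isolated the conforming part $\y_h^c$ (resp.\ $\w_h^c$) and the nonconforming remainder $\y_h^r$ (resp.\ $\w_h^r$) was precisely to reduce the global reliability bound to these two separate contributions.

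More precisely, first I would recall the decomposition
\begin{align*}
|\!|\!|\y-\y_h|\!|\!|_h + |\y-\y_h|_{A}
&\le |\!|\!|\y-\y_h^{c}|\!|\!|_h + |\y-\y_h^{c}|_{A} + |\!|\!|\y_h^{r}|\!|\!|_h + |\y_h^{r}|_{A},
\end{align*}
together with the identical pressure identity $p-p_h = p-p_h$ (so that $\|\nu^{-1/2}(p-p_h)\|_0$ is already contained in the left-hand side of \eqref{5.20}). Then Lemma~\ref{lem: Lemma 5.8.} controls the first two terms plus the pressure contribution by $\eta^{\y} + \Theta^{\y}$, while Lemma~\ref{lem: Lemma 5.6.} controls the last two terms by $\eta^{\y}$. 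Summing the two bounds yields \eqref{5.26}. The co-state estimate \eqref{5.27} follows line by line from the analogous decomposition \eqref{5.6b}, using the co-state versions of Lemmas~\ref{lem: Lemma 5.6.} and \ref{lem: Lemma 5.8.}.

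There is essentially no obstacle here beyond bookkeeping: both auxiliary lemmas have already absorbed the nontrivial ingredients (the conforming/nonconforming splitting, the Scott--Zhang interpolation estimates, the stability result of Lemma~\ref{lem: Lemma 5.5.}, and the lifting-free decomposition of $\tilde{\mathcal{A}}_h$). The only minor care point is to verify that the semi-norm $|\y-\y_h|_A$ and its co-state analogue actually split in the way the decompositions \eqref{5.6a}--\eqref{5.6b} assume; since $|\cdot|_A$ is a seminorm composed of an $|\cdot|_*$ piece (linear inside a sup) and squared $L^2$-jump pieces, the triangle inequality applies componentwise, so the splitting is legitimate. Once this is noted, the proof is just a two-line combination.
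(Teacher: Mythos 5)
Your proposal is correct and follows essentially the same route as the paper, which likewise obtains \eqref{5.26}--\eqref{5.27} by inserting Lemmas~\ref{lem: Lemma 5.6.} and \ref{lem: Lemma 5.8.} into the triangle-inequality decompositions \eqref{5.6a}--\eqref{5.6b} (the paper also cites Lemma~\ref{lem: Lemma 5.7.}, but that ingredient is already absorbed into the proof of Lemma~\ref{lem: Lemma 5.8.}, as you note). Your remark on the componentwise validity of the triangle inequality for the seminorm $|\cdot|_A$ is a reasonable extra care point that the paper leaves implicit.
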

\begin{proof}
	These two estimates can be derived by employing Lemmas-\ref{lem: Lemma 5.6.}, \ref{lem: Lemma 5.7.}, and \ref{lem: Lemma 5.8.} in the equations (\ref{5.6a}-\ref{5.6b}).
\end{proof}
\begin{theorem}\label{thm: Theorem 5.11.}
Suppose that $(\y,p,\w,r,\mathbf{u}) \in  \boldsymbol{V} \times Q \times  \boldsymbol{V} \times Q \times U_{ad}$ solves the system (\ref{2.13a}-\ref{2.13e}) and $(\y_h,p_h,\w_h,r_h,\mathbf{u}_h) \in  \boldsymbol{V}_h \times Q_h \times  \boldsymbol{V}_h \times Q_h \times U_{ad,h}$ solves the system (\ref{3.1a}-\ref{3.1e}). If the error estimators and the data approximation errors are as defined in (\ref{5.3}-\ref{5.4}), then we have the following posteriori error bound 
\begin{align}
\nonumber	\|\u-\u_h\|_{0} + &|\!|\!|\y-\y_h|\!|\!|_h + |\y-\y_h|_{A} + \|\nu^{-1/2}(p-p_h)\|_{0} \\
\label{5.29} 	+  &|\!|\!|\w-\w_h|\!|\!|_h + |\w-\w_h|_{A} + \|\nu^{-1/2}(r-r_h)\|_{0} \precsim \eta^{\y} + \eta^{\w} + \Theta^{\y} + \Theta^{\w} + \eta^{\u}.
\end{align}
\end{theorem}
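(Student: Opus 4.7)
The overall strategy is to split the left-hand side into (i) the state/adjoint velocity and pressure terms, which are already handled by Lemma \ref{lem: Lemma 5.9.}, and (ii) the control error $\|\u-\u_h\|_{0}$, which must be treated by a fresh argument based on the two variational inequalities. Lemma \ref{lem: Lemma 5.9.} immediately gives
\[
|\!|\!|\y-\y_h|\!|\!|_h + |\y-\y_h|_{A} + \|\nu^{-1/2}(p-p_h)\|_{0} \precsim \eta^{\y} + \Theta^{\y},
\]
and likewise for the co-state, with $\eta^{\w}+\Theta^{\w}$. So the whole of my work goes into showing
\[
\|\u-\u_h\|_{0} \precsim \eta^{\u} + \eta^{\w} + \Theta^{\w},
\]
after which the triangle inequality (and the fact that each bounded term on the LHS is non-negative) yields \eqref{5.29}.

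For the control error, I would test the continuous optimality condition \eqref{2.13e} with $\overline{\u}=\u_h\in\boldsymbol{U}_{ad}$ and the discrete optimality condition \eqref{3.1e} with $\overline{\u}_h=\Pi_h\u$, where $\Pi_h$ denotes the elementwise $L^{2}$-projection onto the piecewise constant space $\boldsymbol{U}_{ad,h}$. The fact that $\Pi_h\u \in \boldsymbol{U}_{ad,h}$ follows because $\Pi_h$ is a local averaging operator and thus preserves the pointwise bounds $\u_a\le\u\le\u_b$. Adding the two inequalities, rewriting $\u-\u_h = (\u-\Pi_h\u)+(\Pi_h\u-\u_h)$, and using $\Pi_h\u_h=\u_h$, a routine rearrangement (mirroring the identity \eqref{4.23n} derived for the a priori estimate) gives
\[
\lambda\|\u-\u_h\|_{0}^{2} \le -\bigl(\w-\w_h,\,\u-\u_h\bigr)-\bigl(\w_h+\lambda\u_h,\,\u-\Pi_h\u\bigr).
\]
The first term is treated by Cauchy-Schwarz and the bound $\|\w-\w_h\|_{0}\precsim \kappa^{-1/2}|\!|\!|\w-\w_h|\!|\!|_h \precsim \eta^{\w}+\Theta^{\w}$ delivered by Lemma \ref{lem: Lemma 5.9.} (using a Poincar\'e-type argument when $\kappa=0$). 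The second term, the control residual, is where the estimator $\eta^{\u}$ must surface.

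The main obstacle is extracting $\eta^{\u}=\bigl(\sum_K h_K^{2}\|\w_h+\lambda\u_h\|_{0,K}^{2}\bigr)^{1/2}$ from $\bigl(\w_h+\lambda\u_h,\,\u-\Pi_h\u\bigr)$ with the correct $h_K$-scaling. The key identity is that $\u-\Pi_h\u$ is $L^{2}$-orthogonal to piecewise constants, so
\[
(\w_h+\lambda\u_h,\,\u-\Pi_h\u) = (\w_h-\Pi_h\w_h,\,\u-\Pi_h\u)
\]
(using $\Pi_h\u_h=\u_h$). Applying a local Cauchy-Schwarz with the scaling $h_K \cdot h_K^{-1}$ followed by the discrete Cauchy-Schwarz across elements yields
\[
|(\w_h+\lambda\u_h,\,\u-\Pi_h\u)| \le \eta^{\u}\,\Bigl(\sum_{K}h_K^{-2}\|\u-\Pi_h\u\|_{0,K}^{2}\Bigr)^{1/2},
\]
and the second factor is controlled by the standard $L^{2}$-projection estimate together with the regularity $\u\in H^{1}$ inherited from the projection formula \eqref{2.14} and the smoothness of $\w$. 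A final application of Young's inequality absorbs any residual $\|\u-\u_h\|_{0}^{2}$ on the right-hand side back into the left, after which combining with Lemma \ref{lem: Lemma 5.9.} produces the claimed estimate \eqref{5.29}.
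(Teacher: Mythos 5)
Your reduction of the state and adjoint blocks to Lemma~\ref{lem: Lemma 5.9.} is exactly how the paper proceeds, and your derivation of
\begin{align*}
\lambda\|\u-\u_h\|_{0}^{2} \;\le\; -(\w-\w_h,\u-\u_h)-(\w_h+\lambda\u_h,\u-\Pi_h\u)
\end{align*}
from the two variational inequalities (\ref{2.13e}) and (\ref{3.1e}) is correct, since $\u_h\in\boldsymbol{U}_{ad}$ and $\Pi_h\u\in\boldsymbol{U}_{ad,h}$. This part is genuinely more direct than the paper's argument: the paper never forms $\w-\w_h$ in the control step, but instead introduces the auxiliary continuous solutions $(\y(\u_h),p(\u_h))$ and $(\w(\u_h),r(\u_h))$, exploits the convexity identity $(\w-\w(\u_h),\u-\u_h)=\|\y-\y(\u_h)\|_{0}^{2}\ge 0$ to discard a term of favourable sign, and then controls $|\!|\!|\w(\u_h)-\w_h|\!|\!|_h$ in a separate step through yet another intermediate adjoint solution $\tilde\w$ combined with stability and Lemma~\ref{lem: Lemma 5.9.}. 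Your shortcut replaces that machinery with a single application of Lemma~\ref{lem: Lemma 5.9.} to $\|\w-\w_h\|_{0}$ (with the $\kappa=0$ Poincar\'e caveat you already note), which is legitimate.

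The gap is in your treatment of the control residual. Your bound
\begin{align*}
|(\w_h+\lambda\u_h,\u-\Pi_h\u)| \;\le\; \eta^{\u}\Big(\sum_{K\in\mathcal{T}_h}h_K^{-2}\|\u-\Pi_h\u\|_{0,K}^{2}\Big)^{1/2} \;\precsim\; \eta^{\u}\,\|\u\|_{1}
\end{align*}
leaves $\|\u\|_{1}$ on the right-hand side. That is a norm of the unknown exact control: it is neither an error quantity that Young's inequality can absorb into $\lambda\|\u-\u_h\|_{0}^{2}$ (no factor of $\|\u-\u_h\|_{0}$ is produced by this term) nor a computable estimator or oscillation allowed to remain on the right of (\ref{5.29}). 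As written, the argument therefore does not yield an a posteriori bound. The repair consistent with the paper's choice of test function is to use $\|\u-\Pi_h\u\|_{0,K}=\|(I-\Pi_h)(\u-\u_h)\|_{0,K}\le\|\u-\u_h\|_{0,K}$ (valid because $\Pi_h\u_h=\u_h$ and $I-\Pi_h$ is an orthogonal projection), which makes the companion factor absorbable without any regularity of $\u$; alternatively, one can invoke the projection formula (\ref{2.14}) to rewrite $\u-\Pi_h\u$ in terms of adjoint errors. Be aware, however, that this repair produces the unweighted residual $\sum_{K}\|\w_h+\lambda\u_h\|_{0,K}^{2}$ rather than the $h_K^{2}$-weighted quantity $(\eta^{\u})^{2}$, and the former is not dominated by the latter; your explicit computation exposes a difficulty that the paper's terse appeal to ``Young's inequality'' in its Step~2 conceals, and any complete write-up must say precisely how the $h_K$-weights in $\eta^{\u}$ are made to appear.
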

\begin{proof}
\textbf{Step -1:} For given $\u_h \in \boldsymbol{L}^{2}(\Omega)$, let $(\y(\u_h), p(\u_h))$ be a solution of the system (\ref{4.22a}-\ref{4.22b}) and $(\w(\u_h), r(\u_h))$ be solution of the system (\ref{4.22c}-\ref{4.22d}).
Subtracting these set of equations from (\ref{2.13a}-\ref{2.13b}) and (\ref{2.13c}-\ref{2.13d}), respectively, we have
\begin{align*}
	a(\y-\mathbf{y}(\u_h),\mathbf{v}) + b(\mathbf{v},p-p(\u_h)) &= ( \u-\u_h , \mathbf{v})  && \forall \ \mathbf{v} \in  \boldsymbol{V}, \\
	\hspace{1.57cm}b(\y-\mathbf{y}(\u_h),\phi) &= 0 &&  \forall \ \phi \in Q,\\
	a(\z,\w-\w(\u_h)) - b(\mathbf{z},r-r(\u_h)) &= ( \mathbf{y}- \mathbf{y}(\u_h) , \mathbf{z})  \    \ && \forall \ \mathbf{z} \in  \boldsymbol{V}, \\
	\hspace{1.65cm}b(\w-\w(\u_h),\psi) &= 0 \   &&  \forall \ \psi \in Q.
\end{align*} 
Upon substituting $\v = \w-\w(\u_h),\ \phi = r - r(\u_h),\ \z = \y-\y(\u_h)$,\ and $\psi = p - p(\u_h)$ into the aforementioned set of equations, we obtain:
\begin{align}
	\label{5.30}  a(\y-\y(\u_h),\w-\w(\u_h)) &= (\u-\u_h,\w-\w(\u_h)) = \|\y-\y(\u_h)\|_{0}^{2} \ge 0.
\end{align}
\textbf{Step -2:} 
We now exploit the convexity of the linear quadratic optimal control problem to demonstrate a relationship between the control and the co-state variables. Let
\begin{align}
	\label{5.31} 	 (F'(\u),\v) &= (\lambda \u+ \w, \v) \ \ && \forall \ \v \in \boldsymbol{V},\\
	\label{5.32} 	 (F'(\u_h),\v) &= (\lambda \u_h + \w(\u_h), \v) \ \ && \forall \ \v \in \boldsymbol{V},
\end{align} where $\w$ is a solution of the co-state problem. 
After performing the subtraction of (\ref{5.32}) from (\ref{5.31}) and employing the substitution $\v = \u - \u_h$, we arrive at the expression:
\begin{align}
	\label{5.33}	 (F'(\u)-F'(\u_h),\u - \u_h) &= \lambda(\u - \u_h, \u - \u_h) + (\w-\w(\u_h),\u - \u_h).
\end{align}
By incorporating (\ref{5.30}) and the variational inequality (\ref{2.13e}) into (\ref{5.33}), we obtain the following:
\begin{align}
  \label{5.34d}  \lambda \|\u - \u_h\|_{0}^{2}  
		&\le (\w_h - \w(\u_h),\u-\u_h) - (\w_h + \lambda \u_h, \u-\v_h).
\end{align}
We consider $\v_h = I_h \u \in \boldsymbol{U}_{ad,h}$ and leverage Young's inequality in (\ref{5.34d}) to obtain:
\begin{align*}
	   \lambda \|\u - \u_h\|_{0}^{2} 
		  &\le C |\!|\!|\mathbf{w}_h-\w(\u_h)|\!|\!|_{h}^{2} +  \frac{\lambda}{2}\|\u-\u_h\|_{0}^{2} + C(\eta^{\u})^{2}.
\end{align*}
As a consequence of the aforementioned equation, we arrive at 
\begin{align}
	\label{5.35} 	  \|\u - \u_h\|_{0}  \precsim \ & \eta^{\u} + |\!|\!|\mathbf{w}_h-\w(\u_h)|\!|\!|_{h}.
\end{align}
This result elucidates the relationship between the control and the co-state velocity.\\
\textbf{Step -3:} Now we construct the connection between state and co-state. Let $(\tilde{\w}, \tilde{r})$ solves (\ref{2.13c}-\ref{2.13d}) with $\y=\y_h$. Then $(\w(\u_h)-\tilde{\w}, r(\u_h)-\tilde{r})$ solves 
\begin{align*}
	a(\z,\w(\u_h)-\tilde{\w}) - b(\mathbf{z},r(\u_h)-\tilde{r}) &= (\mathbf{y}(\u_h) - \mathbf{y}_h , \mathbf{z}) && \forall \ \mathbf{z} \in  \boldsymbol{V},\\
	b(\w(\u_h)-\tilde{\w},\psi) &= 0 && \forall \ \phi \in Q.
\end{align*}
Leveraging Lemma-\ref{lem: Lemma 5.5.} on the continuous formulation, we arrive at
\begin{align}
\label{5.36} 	|\!|\!|\w(\u_h)-\tilde{\w}|\!|\!|_{h} + |\w(\u_h)-\tilde{\w}|_{A} + \|r(\u_h)-\tilde{r}\|_{0} \precsim \|\mathbf{y}(\u_h) - \mathbf{y}_h\|_{0}.
	\end{align}
Using Lemma-\ref{lem: Lemma 5.9.}, we end up with
\begin{align}
\label{5.37} 	|\!|\!|\tilde{\w}-\w_h|\!|\!|_h + |\tilde{\w}-\w_h|_{A} + \|\nu^{-1/2}(\tilde{r}-r_h)\|_{0} \precsim \eta^{w} + \Theta^{w}.
	\end{align}
By exploiting the triangle Inequality and the results presented in (\ref{5.36}) and (\ref{5.37}), we obtain
\begin{align}
	 |\!|\!|\w(\u_h)-\w_h|\!|\!|_h + |\w(\u_h)-\w_h|_{A} + \|\nu^{-1/2}(r(\u_h)-r_h)\|_{0} 
	&\precsim  \label{5.38} 	 \ \|\mathbf{y}(\u_h) - \mathbf{y}_h\|_{0}+\eta^{w} + \Theta^{w} .
\end{align}
In the context of the state equation, Lemma-\ref{lem: Lemma 5.9.} yields
\begin{align}
	\label{5.39} 	|\!|\!|\y(\u_h)-\y_h|\!|\!|_h + |\y(\u_h)-\y_h|_{A} + \|\nu^{-1/2}(p(\u_h)-p_h)\|_{0} \precsim \eta^{y} + \Theta^{y}.
\end{align}
Employing Lemma-\ref{lem: Lemma 5.5.} and incorporating (\ref{5.38}) and (\ref{5.39}) into (\ref{5.35}) implies the desired reliability result.
\end{proof}
\subsection{Efficiency estimates of a posteriori error estimators}
This subsection establishes efficiency estimates for discretization errors in the optimal control problem. The efficiency of the estimator implies a lower bound for these errors (up to data oscillations).
We initiate the process by proving efficiency estimates for both the state and co-state variables. 
\begin{lemma}\label{lem: Lemma 5.16.}
	Suppose $(\y,p)$, $(\w,r)$ be solutions of (\ref{2.13a}-\ref{2.13b}), (\ref{2.13c}-\ref{2.13d}), respectively, and $(\y_h,p_h)$, $(\w_h,r_h)$ be solutions of (\ref{3.1a}-\ref{3.1b}), (\ref{3.1c}-\ref{3.1d}), respectively. Then, we have 
	\begin{align}
	 \label{5.58} 	\eta^{y} &\precsim	|\!|\!|\y-\y_h|\!|\!|_h + |\y-\y_h|_{A} + \|\nu^{-1/2}(p-p_h)\|_{0}  + \Theta^{y},\\
	\label{5.59}     \eta^{w} 	&\precsim |\!|\!|\w-\w_h|\!|\!|_h + |\w-\w_h|_{A} + \|\nu^{-1/2}(r-r_h)\|_{0} + \Theta^{w}.
	\end{align}
\end{lemma}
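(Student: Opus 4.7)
The plan is to bound each of the three constituents of $\eta^{\y}$ (namely $\eta_{R_K}^{\y}$, $\eta_{E_K}^{\y}$, $\eta_{J_K}^{\y}$) locally by the relevant error and data-oscillation quantities, via the standard Verf\"urth bubble function technique adapted to the divergence-conforming DG setting with variable viscosity, and then mirror the entire argument for the co-state. The argument for $\eta^{\w}$ is essentially identical to that for $\eta^{\y}$, with the roles of $\y_d - \y_h$ and $\f + \u_h$ interchanged and the sign of the convective term flipped, so once the state estimate is established the co-state estimate follows verbatim.

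First I would handle the trace residual $\eta_{J_K}^{\y}$. Since $\y \in \boldsymbol{H}_0^{1}(\Omega)$, all interior jumps $[\![\y \otimes \n]\!]$ vanish and the boundary trace is zero. Therefore $[\![\y_h \otimes \n]\!] = -[\![(\y - \y_h)\otimes \n]\!]$ on each interior edge and $\y_h = -(\y - \y_h)$ on $\Gamma$. Each term in the definition of $\eta_{J_K}^{\y}$ (weighted by $\gamma/h_E$, by $\kappa h_E$, and by $h_E$ with the factor $\nu_h^{-1/2}$) is then directly controlled by either $|\!|\!|\y-\y_h|\!|\!|_h$ or the edge-jump part of $|\y-\y_h|_A$, giving $\eta_{J_K}^{\y} \precsim |\!|\!|\y-\y_h|\!|\!|_h + |\y-\y_h|_A$ after summing over $K$.

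Next I would bound the interior residual $\eta_{R_K}^{\y}$. Let $R_K$ denote the interior residual (with data replaced by their approximations) on element $K$, and let $\psi_K$ be the standard element bubble function supported on $K$. Testing the exact state equation \eqref{2.13a} against $\psi_K R_K$ (extended by zero) and subtracting the corresponding identity obtained from integration by parts on $K$, I obtain an integral identity of the form $\int_K R_K \cdot (\psi_K R_K)\,dx$ equal to terms involving $\y - \y_h$, $p - p_h$, and the data oscillation $(\f-\f_h,\nu-\nu_h,\boldsymbol{\beta}-\boldsymbol{\beta}_h,\sigma-\sigma_h)$. Using the standard bubble-function norm equivalences $\|\psi_K R_K\|_{0,K} \simeq \|R_K\|_{0,K}$ together with the inverse estimates $\|\nabla(\psi_K R_K)\|_{0,K} \precsim h_K^{-1}\|R_K\|_{0,K}$ and the weight identity $\rho_K \le h_K \nu^{-1/2}$, $\rho_K \le \kappa^{-1/2}$, I can absorb the $h_K^{-1}$ factor on the right-hand side into the $\nu^{1/2}$-weighted gradient norm (plus the $\kappa^{1/2} L^2$ part) of $\y-\y_h$ and the $\nu^{-1/2}$-weighted pressure norm. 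The convective contribution $\|\rho_K(\boldsymbol{\beta}_h\cdot\nabla)\y_h\|_{0,K}$ is controlled by the seminorm $|\y-\y_h|_A$ through its $|\cdot\boldsymbol{\beta}^T|_*$ component, and the terms with $\nu - \nu_h$, $\sigma-\sigma_h$, $\boldsymbol{\beta}-\boldsymbol{\beta}_h$, $\f-\f_h$ are precisely $\Theta_K^{\y}$.

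For the edge residual $\eta_{E_K}^{\y}$, I would employ edge bubble functions $\psi_E$ supported on the patch $\omega_E = K^+ \cup K^-$, extended by a standard lifting. Let $J_E = [\![\rho_E^{1/2}\nu_h^{-1/4}(p_h I - \nu\nabla \y_h)\cdot\n]\!]$ denote the edge flux jump. Testing again against $\psi_E J_E$ on $\omega_E$, using the integration-by-parts identity that relates the edge jump to the element residuals already bounded in the previous step, and invoking the scaled trace inequality $\|\psi_E J_E\|_{0,E}\simeq \|J_E\|_{0,E}$ together with $\|\psi_E J_E\|_{0,\omega_E}\precsim h_E^{1/2}\|J_E\|_{0,E}$, I reduce $\eta_{E_K}^{\y}$ to a combination of the already-bounded interior residual on $\omega_E$, the energy-norm error on $\omega_E$, the pressure error, and the oscillation. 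Summing over $K$ (with bounded overlap of patches) yields the desired bound for $\eta^{\y}$. The same three-step procedure (jump, interior bubble, edge bubble) applied to the adjoint residuals gives the bound for $\eta^{\w}$.

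The main obstacle I anticipate is keeping the viscosity-dependent weights $\rho_K$, $\rho_E$, $\nu_h^{\pm 1/4}$, $\gamma_h$ balanced throughout the bubble-function manipulations so that the resulting constants are genuinely independent of $\nu^{-1}$ (the Reynolds-number-robust property advertised in the introduction). This is what forces the careful choice $\rho_K = \min\{h_K\nu^{-1/2},\kappa^{-1/2}\}$ and requires separating the diffusion-like and reaction-like contributions in the norm $|\!|\!|\cdot|\!|\!|_h$; everything else is a bookkeeping adaptation of \cite{AGO} to accommodate the variable viscosity and the non-solenoidal $\boldsymbol{\beta}$.
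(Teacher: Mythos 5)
Your proposal is correct and follows essentially the same route as the paper, which simply invokes the standard Verf\"urth bubble-function technique (element bubbles for $\eta_{R_K}$, edge bubbles for $\eta_{E_K}$, and the vanishing of the exact solution's jumps for $\eta_{J_K}$) as adapted in the cited references. Your write-up is in fact more detailed than the paper's one-line proof, and your attention to the weights $\rho_K$, $\rho_E$ for $\nu$-robustness is consistent with the paper's stated aims.
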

	\begin{proof}
		Proof directly follows by deriving efficiency bounds for $\eta_{R_K}$, $\eta_{J_K}$, and $\eta_{E_K}$ through the application of the standard bubble function technique as presented in \cite{AGO,VERFP, VERFPO}.
	\end{proof}
\begin{theorem}
	Suppose that $(\y,p,\w,r,\mathbf{u}) \in  \boldsymbol{V} \times Q \times  \boldsymbol{V} \times Q \times U_{ad}$ solves the system (\ref{2.13a}-\ref{2.13e}) and $(\y_h,p_h,\w_h,r_h,\mathbf{u}_h) \in  \boldsymbol{V}_h \times Q_h \times  \boldsymbol{V}_h \times Q_h \times U_{ad,h}$ solves the system (\ref{3.1a}-\ref{3.1e}).  If the error estimators and the data approximation errors are as defined in (\ref{5.1}-\ref{5.3}), then we have the following lower bound
		\begin{align}
	\nonumber \eta^{\y} + \eta^{\w} + \eta^{\u} \precsim & \ \|\u-\u_h\|_{0} + |\!|\!|\y-\y_h|\!|\!|_h + |\y-\y_h|_{A} + \|\nu^{-1/2}(p-p_h)\|_{0}+ |\!|\!|\w-\w_h|\!|\!|_h \\
	\label{5.60} 	& + |\w-\w_h|_{A} + \|\nu^{-1/2}(r-r_h)\|_{0}   + \Theta^{\y} + \Theta^{\w}.
	\end{align}
\end{theorem}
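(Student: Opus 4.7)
The plan is to treat the three global indicators $\eta^{\y}$, $\eta^{\w}$, and $\eta^{\u}$ separately and then combine.

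For the state and co-state indicators, the two bounds
\begin{align*}
\eta^{\y} &\precsim |\!|\!|\y-\y_h|\!|\!|_h + |\y-\y_h|_{A} + \|\nu^{-1/2}(p-p_h)\|_{0}+\Theta^{\y},\\
\eta^{\w} &\precsim |\!|\!|\w-\w_h|\!|\!|_h + |\w-\w_h|_{A} + \|\nu^{-1/2}(r-r_h)\|_{0}+\Theta^{\w}
\end{align*}
are supplied element-wise by Lemma \ref{lem: Lemma 5.16.} (obtained by the standard bubble-function technique already adapted to the variable-viscosity divergence-conforming setting in \cite{AGO}). Summing and taking square roots gives the portion of (\ref{5.60}) corresponding to these two indicators, producing all of the terms on the right-hand side except $\|\u-\u_h\|_0$.

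For the control indicator, the goal is to control $\eta^{\u}_K = h_K\|\w_h + \lambda \u_h\|_{0,K}$. I would start from the triangle inequality after inserting $\pm(\w + \lambda \u)$:
\begin{align*}
h_K \|\w_h + \lambda \u_h\|_{0,K} \le h_K \|\w - \w_h\|_{0,K} + \lambda h_K \|\u - \u_h\|_{0,K} + h_K \|\w + \lambda \u\|_{0,K}.
\end{align*}
The first term is controlled by $h_K \kappa^{-1/2}|\!|\!|\w-\w_h|\!|\!|_{h,K}$ (using the $\kappa\|\cdot\|_0^2$ contribution to the triple-bar norm defined in (\ref{3.9})) and hence after summation by the global co-state energy error. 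The second is immediately bounded by $\|\u-\u_h\|_0$. The third term is where the analysis is most delicate, since $\w+\lambda\u$ is in general non-zero on the active set.

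To handle the third term I would use the continuous projection identity (\ref{2.14}) together with the piecewise-constant structure of $\boldsymbol{U}_{ad,h}$. Testing the discrete variational inequality (\ref{3.1e}) with $\bar{\u}_h = \u_h \pm \tau \chi_K \mathbf{e}_i$ for small $\tau>0$ yields, on any element on which $\u_h|_K$ lies strictly inside $[\u_a,\u_b]$ componentwise, the element-wise Euler condition $\bar{\w}_h|_K + \lambda \u_h|_K = 0$, while on active elements a matching one-sided relation of the same order holds. Subtracting the continuous projection identity $\w + \lambda \u = 0$ valid on the inactive set, and using a Poincar\'e inequality for the mean oscillation $\w_h - \bar{\w}_h|_K$ on elements where both problems are inactive, lets us dominate $h_K\|\w+\lambda\u\|_{0,K}$ by $\|\u-\u_h\|_{0,K}$ and $h_K$-weighted contributions of $\w-\w_h$. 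Squaring, summing over $K\in\mathcal{T}_h$, and combining with the previously obtained bounds for $\eta^{\y}$ and $\eta^{\w}$ then yields (\ref{5.60}).

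The main technical obstacle I anticipate is precisely the third-term analysis: on elements straddling the free boundary between active and inactive regions the two projection identities (continuous on the original control and discrete on its piecewise-constant counterpart) are not simultaneously of the same form, so the absorption argument must distinguish three cases (fully inactive, fully active, mixed), with the mixed case handled by the observation that the $h_K$ weighting turns the contribution into a higher-order term that is swallowed by $\|\u-\u_h\|_{0,K}$ via the projection approximation property $\|\u-\Pi_h\u\|_{0,K}\le Ch_K\|\u\|_{1,K}$.
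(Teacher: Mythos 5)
Your handling of $\eta^{\y}$ and $\eta^{\w}$ reaches the same bounds as the paper but by a slightly different route: you invoke Lemma~\ref{lem: Lemma 5.16.} directly for the pairs $(\y,\y_h)$ and $(\w,\w_h)$, whereas the paper first applies that lemma to the auxiliary continuous solutions $(\y(\u_h),p(\u_h))$ and $(\w(\u_h),r(\u_h))$ of (\ref{4.22a}-\ref{4.22d}) --- the natural objects to compare against, since the element residuals are built from $\u_h$ rather than $\u$ --- and then uses the triangle inequality together with the stability result of Lemma~\ref{lem: Lemma 5.5.} to trade $\y(\u_h)$ for $\y$ and $\w(\u_h)$ for $\w$; that exchange is precisely where the $\|\u-\u_h\|_{0}$ term on the right of (\ref{5.60}) comes from. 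Your direct citation is a fair literal reading of the lemma as stated, but the auxiliary-solution detour is the cleaner justification of the data mismatch between the residual (which contains $\u_h$) and the PDE satisfied by $\y$ (which contains $\u$).

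The genuine gap is in your treatment of $\eta^{\u}$ (which, it should be said, the paper's own proof does not address at all). After inserting $\pm(\w+\lambda\u)$, the terms $h_K\|\w-\w_h\|_{0,K}$ and $\lambda h_K\|\u-\u_h\|_{0,K}$ are indeed harmless, but the third term $h_K\|\w+\lambda\u\|_{0,K}$ is not controlled by the error. The projection identity (\ref{2.14}) forces $\w+\lambda\u=0$ only on the inactive set; on strongly active elements the first-order conditions give merely a sign condition, so $\|\w+\lambda\u\|_{0,K}$ is a fixed $O(1)$ quantity there and $h_K\|\w+\lambda\u\|_{0,K}$ is an $O(h_K)$ \emph{data} term, not an error term. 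Your proposed absorption via $\|\u-\Pi_h\u\|_{0,K}\le Ch_K\|\u\|_{1,K}$ bounds this contribution by a regularity quantity involving $\|\u\|_{1}$, which does not appear on the right-hand side of (\ref{5.60}); it cannot be swallowed by $\|\u-\u_h\|_{0,K}$. To make this part rigorous one would either have to augment the right-hand side of (\ref{5.60}) with a control-oscillation term of this type, or redefine the control indicator so that it measures the violation of the discrete projection identity (for instance $\|\u_h-\Pi_{[\u_a,\u_b]}(-\w_h/\lambda)\|_{0,K}$) rather than $h_K\|\w_h+\lambda\u_h\|_{0,K}$ itself.
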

\begin{proof}
	\textbf{Step -1:} Consider a solution $(\y(\u_h), p(\u_h))$ to  (\ref{4.22a}) - (\ref{4.22b}), and a solution $(\y_h, p_h)$ to equations (\ref{3.1a}) - (\ref{3.1b}). Applying Lemma-\ref{lem: Lemma 5.16.}, we arrive at the following:
	\begin{align}
	\label{5.61} 	 \eta^{\y} \precsim	& \ |\!|\!|\y(\u_h)-\y_h|\!|\!|_h + |\y(\u_h)-\y_h|_{A} + \|p(\u_h)-p_h\|_{0} + \Theta^{\y}.
	\end{align}
	Likewise, suppose $(\w(\u_h),r(\u_h))$ is a solution of (\ref{4.22c}-\ref{4.22d}), and $(\w_h,r_h)$ is a solution of (\ref{3.1c}-\ref{3.1d}). Then, Lemma-\ref{lem: Lemma 5.16.} yields:
	\begin{align}
		\label{5.62}  \eta^{\w} \precsim	& \ |\!|\!|\w(\u_h)-\w_h|\!|\!|_h + |\w(\u_h)-\w_h|_{A} + \|r(\u_h)-r_h\|_0 + \Theta^{\w}.
	\end{align}
	\textbf{Step -2:} Applying the triangle Inequality and Lemma-\ref{lem: Lemma 5.5.}, we obtain: 
			\begin{align}
	  \eta^{\w} 
		\label{5.63}   &\precsim	 |\!|\!|\w-\w_h|\!|\!|_h + |\w-\w_h|_{A}  + \|r-r_h\|_0 +\|\u - \u_h\|_{0}+ \Theta^{\w}.
	\end{align}
		Likewise, using (\ref{5.61}), we obtain:
	\begin{align}
	\label{5.64}  \eta^{\y}	  &\precsim	 |\!|\!|\y-\y_h|\!|\!|_{h} + |\y-\y_h|_{A}  + \|p-p_h\| +\|\u - \u_h\|_{0} + \Theta^{\y}.
	\end{align}
		By combining  (\ref{5.63}) and (\ref{5.64}), we obtain the desired bound.
\end{proof}
\section{Numerical Experiments}\label{Numerical Experiments.}
In this section, we showcase a series of numerical experiments on different polygonal domains carried out using the proposed scheme in Section \ref{Discrete Formulation.} to validate the theoretical findings outlined in sections \ref{PRIORI ERROR ESTIMATES.} and \ref{A POSTERIORI ERROR ESTIMATES.}. We use the pair $BDM_{1}-P_{0}$ to approximate the velocity and pressure, respectively. The solution of all linear systems is carried out with the multifrontal massively parallel sparse direct solver MUMPS in Fenics \cite{AMB}. We use the primal-dual active set strategy (PDASS) detailed extensively in \cite[Section~2.12.4]{FTO}. Additionally, we adopt an adaptive mesh refinement procedure inspired by \cite{VANAYA,VERFP} for Examples-\ref{Example 6.2.} and \ref{Example 6.3.}, with initial meshes depicted in Figure-\ref{FIGURE 01}. The control cost parameter is fixed at $\lambda = 1$ for all examples. The global estimator $\Upsilon$ and total error $|\!|\!|\mathbf{e}|\!|\!|_{\Omega}$ are defined as:
\begin{figure}
	\begin{center}
			\includegraphics[scale=0.38]{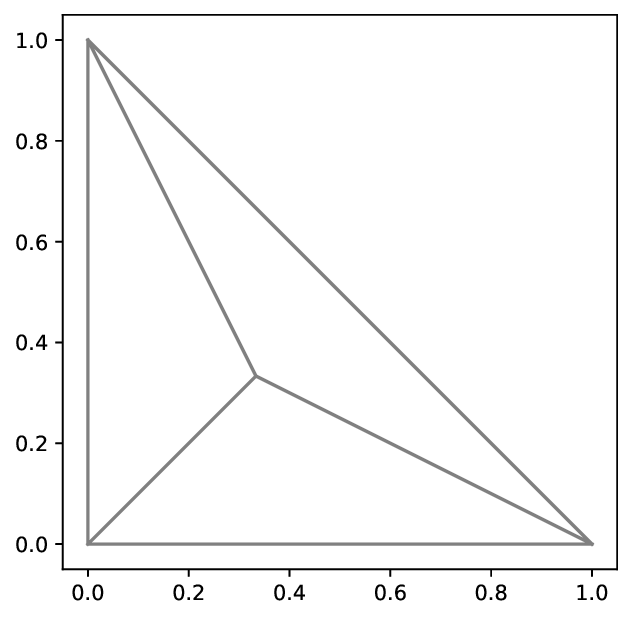} 
			\includegraphics[scale=0.38]{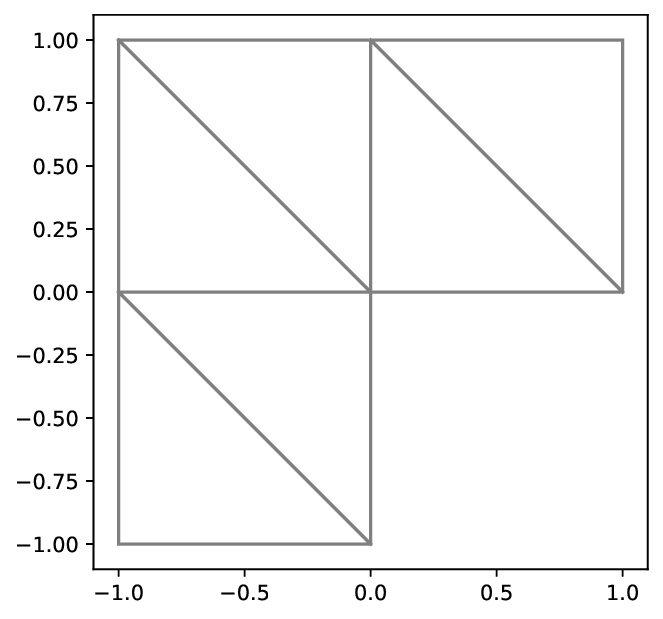}
			\includegraphics[scale=0.38]{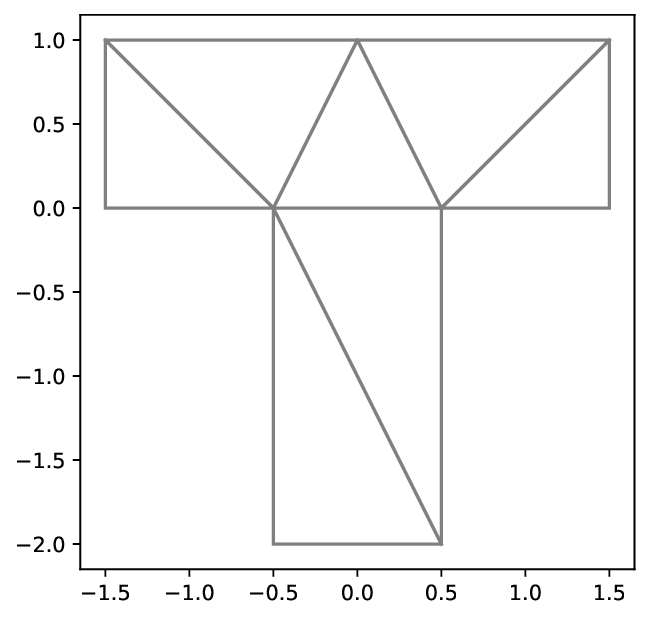}
		\end{center} \vspace{-1mm}
	\caption{ Initial uniform meshes used for Examples-\ref{Example 6.2.}, and \ref{Example 6.3.}.} \label{FIGURE 01}
\end{figure}
\begin{align*}
	\Upsilon = \bigg(\sum_{K \in \mathcal{T}_h} \Upsilon_{K}^{2}\bigg)^{1/2} &= \bigg( \sum_{K \in \mathcal{T}_h} \Big((\eta_{K}^{\y})^{2}+(\eta_{K}^{\w})^{2}+(\eta_{K}^{\u})^{2}\Big) \bigg)^{1/2}, \\
	|\!|\!|\mathbf{e}|\!|\!|_{\Omega} &= |\!|\!|(\mathbf{e}_{\y},\mathbf{e}_{p},\mathbf{e}_{\w},\mathbf{e}_{r},\mathbf{e}_{\u})|\!|\!|_{\Omega}.
\end{align*}
	  \subsection{Accuracy verification test}\label{Example 6.1.}
	  Consider the square domain $\Omega = (0,1)^{2}$, with coefficients $\nu =1+0.01 \xi_1^{2},\ \boldsymbol{\beta} = (\xi_1^{2},\xi_2^{2}),\ \sigma = 1,$ and control bounds  $\mathbf{u_a}=(-0.5,-0.5),$ and $ \mathbf{u_b}=(0.5,0.5)$. Choose $\mathbf{f}$ and $\mathbf{y}_d$ s.t.
	  \begin{align*}
	  	\y(\xi_1,\xi_2) &= \textbf{curl}\Big((\xi_1 (1-\xi_1)\xi_2 (1-\xi_2))^{2}\Big), \ p(\xi_1,\xi_2) = \cos(2\pi \xi_1)\cos(2\pi \xi_2),\\
	  	\w(\xi_1,\xi_2) &= \textbf{curl}\Big((\sin(2\pi\xi_1) \sin(2\pi\xi_2))^{2}\Big), \ \ r(\xi_1,\xi_2) = \cos(2\pi \xi_1)\cos(2\pi \xi_2),
	  \end{align*}
	  are the optimal solutions.
	   We achieve optimal convergence rates for state, co-state, and control variables through uniform refinement, as depicted in Figure-\ref{FIGURE 3}. Notably, both the global estimator $\Upsilon$ and the total error $|\!|\!|\mathbf{e}|\!|\!|_{\Omega}$ exhibit a decrease at the optimal rate, as illustrated in Figure-\ref{FIGURE 4}. The graphical representation reveals a parallel alignment between the graph of the estimator $\Upsilon$ and the error $|\!|\!|\mathbf{e}|\!|\!|_{\Omega}$, with their ratio, known as the efficiency index, consistently demonstrating a uniform behavior. This observation affirms the numerical reliability and efficiency of the proposed error estimator. Figure-\ref{FIGURE 2} presents plots for the numerical solutions of the state and co-state variables.
\begin{figure}
	\begin{center}
		\includegraphics[scale=0.52]{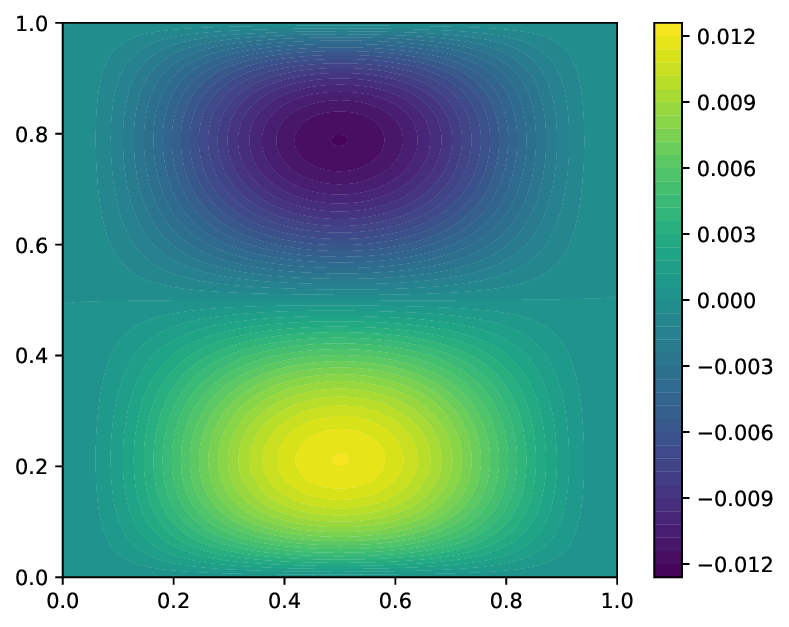}
		\includegraphics[scale=0.52]{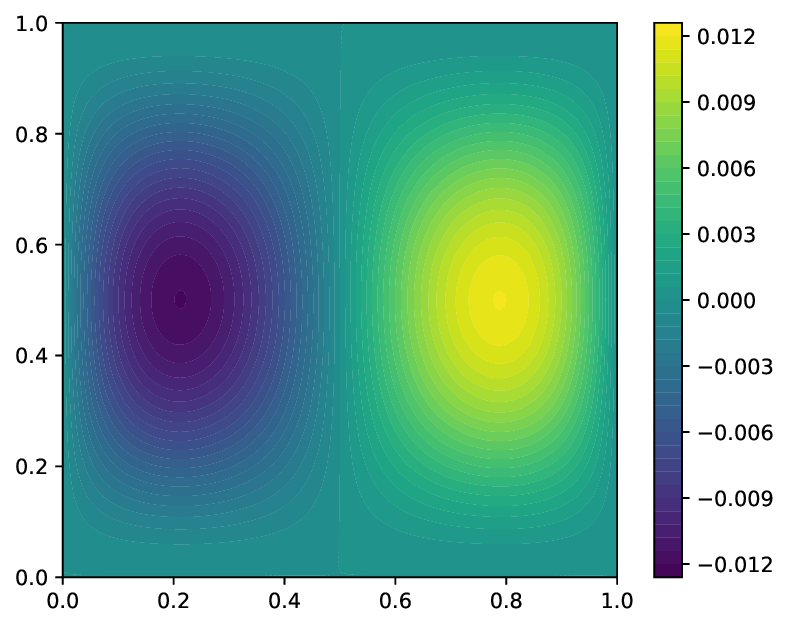}\\
		\includegraphics[scale=0.52]{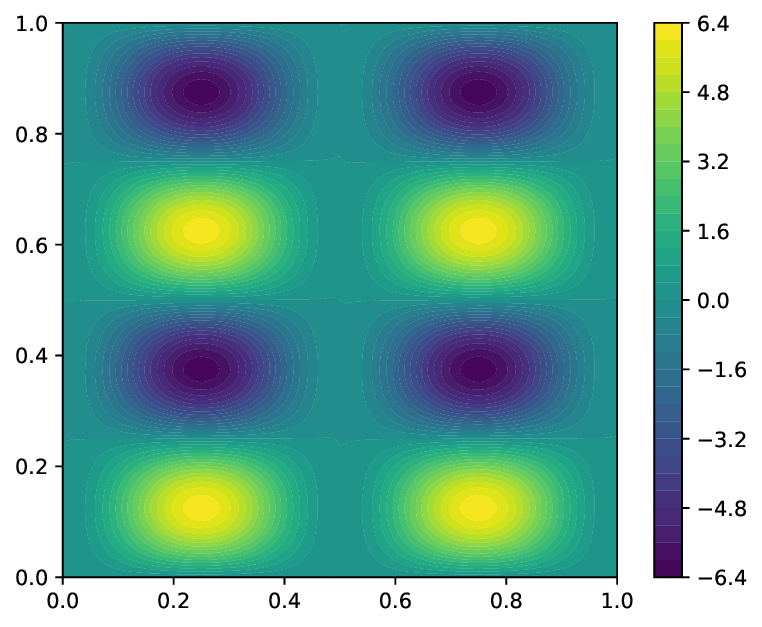}
		\includegraphics[scale=0.52]{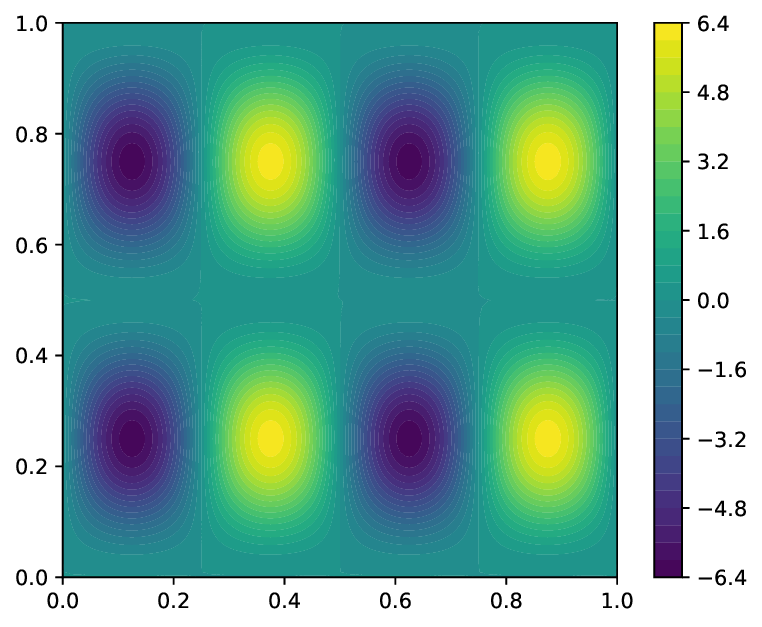}\\
		\includegraphics[scale=0.52]{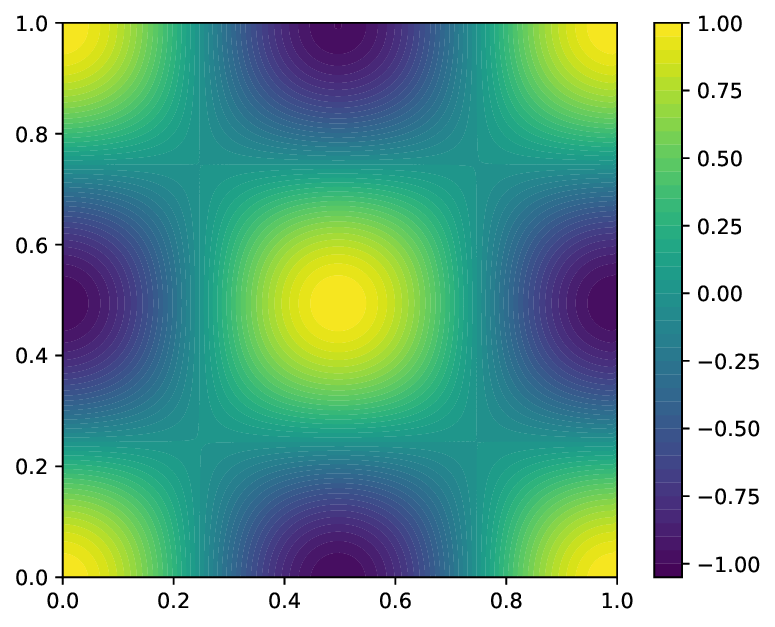} 
		\includegraphics[scale=0.52]{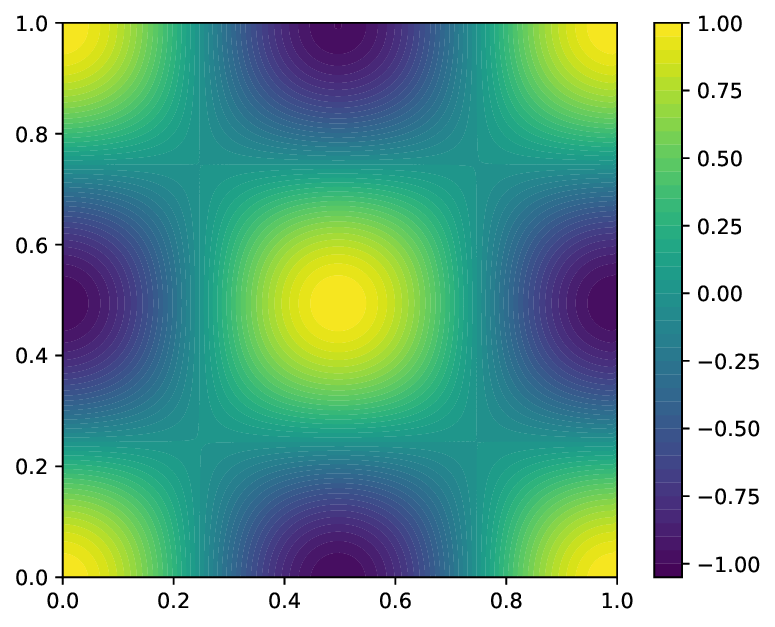} \\ 
		\includegraphics[scale=0.52]{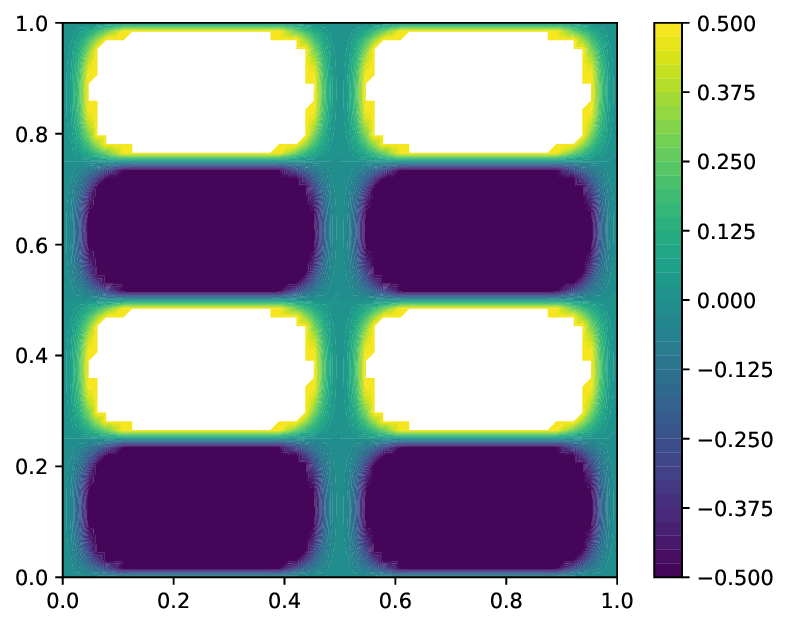}
		\includegraphics[scale=0.52]{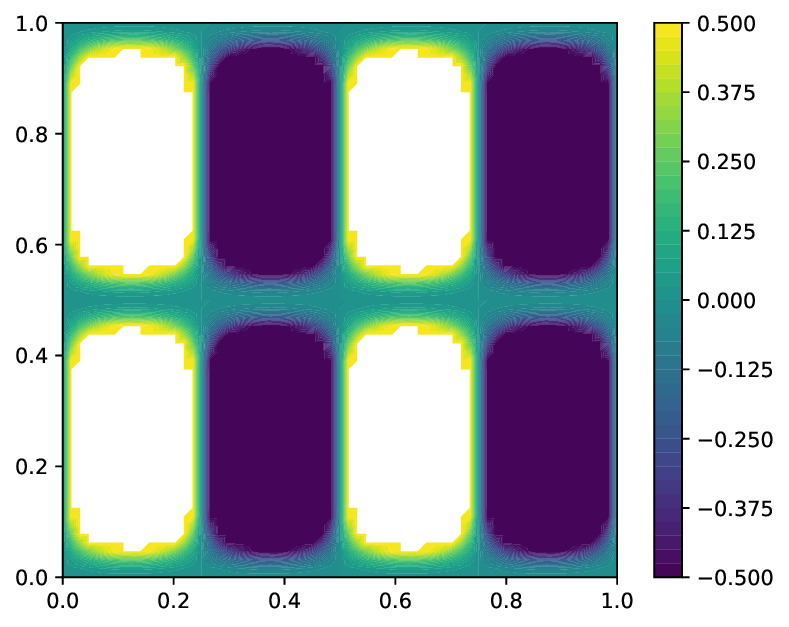}
	\end{center}\vspace{-1mm}
	\caption{Plots of numerical solutions of state velocity $(\y_{h1},\y_{h2})$, co-state velocity $(\w_{h1},\w_{h2})$, state pressure $(p_h)$, co-state pressure $(r_h)$, and control $(\u_{h1},\u_{h2})$, respectively, for Example- \ref{Example 6.1.}.} \label{FIGURE 2}
\end{figure}
\begin{figure}
	\begin{center}
		\includegraphics[scale=0.35]{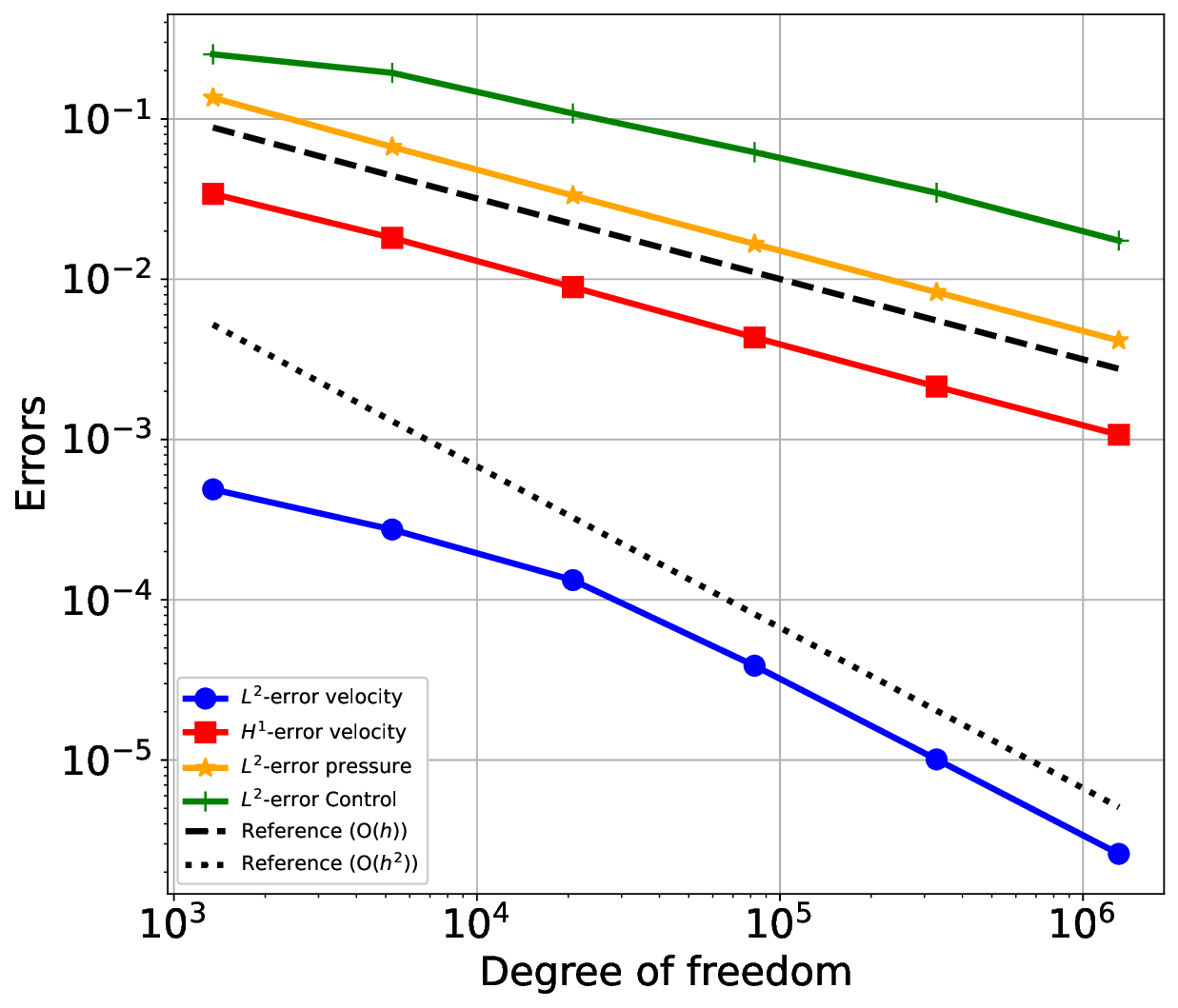}
		\includegraphics[scale=0.35]{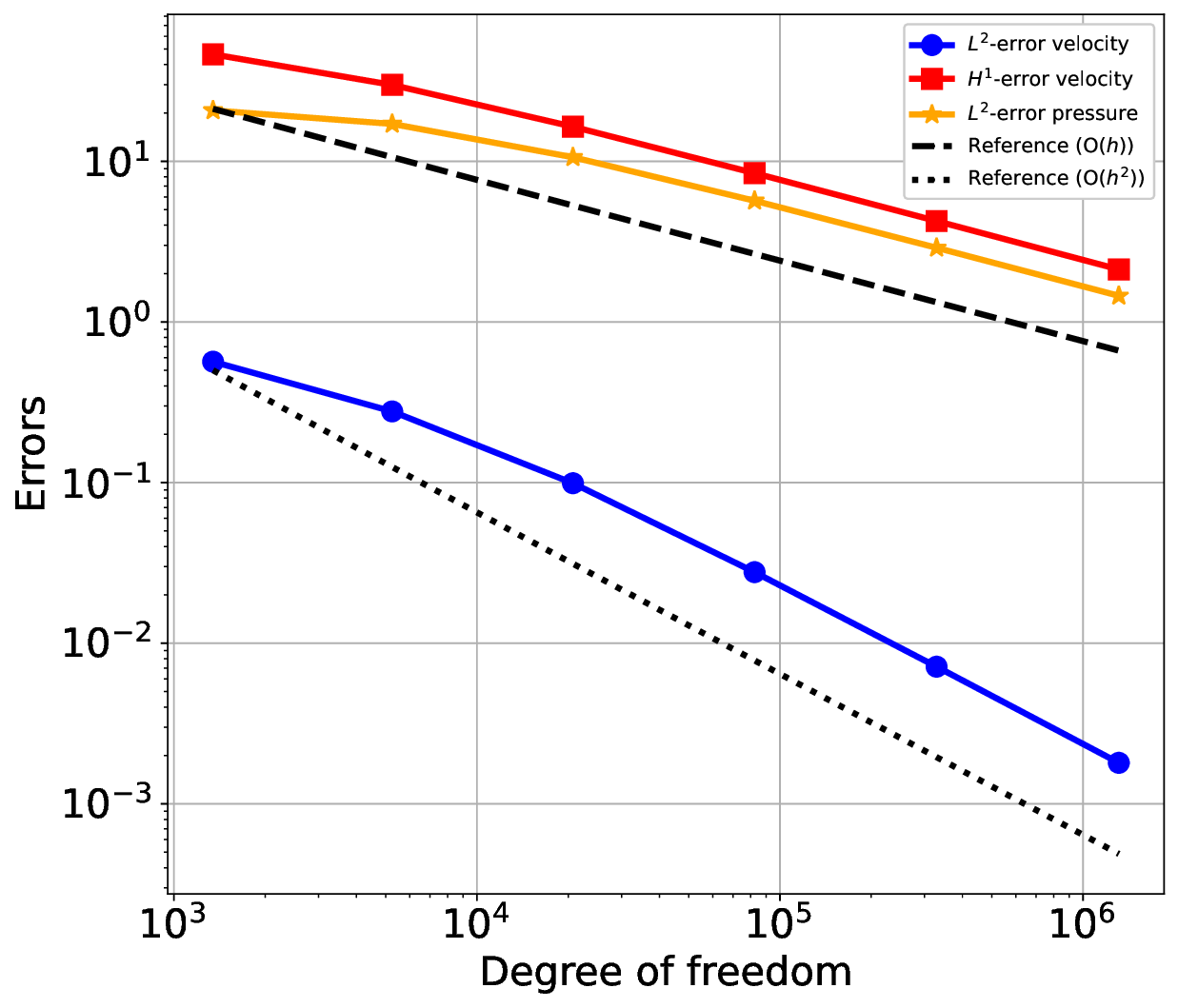} 
	\end{center}
	\caption{Convergence plots for the state and co-state variables for Example- \ref{Example 6.1.}.} \label{FIGURE 3}
\end{figure} 
\begin{figure}
	\begin{center}
 		\includegraphics[scale=0.35]{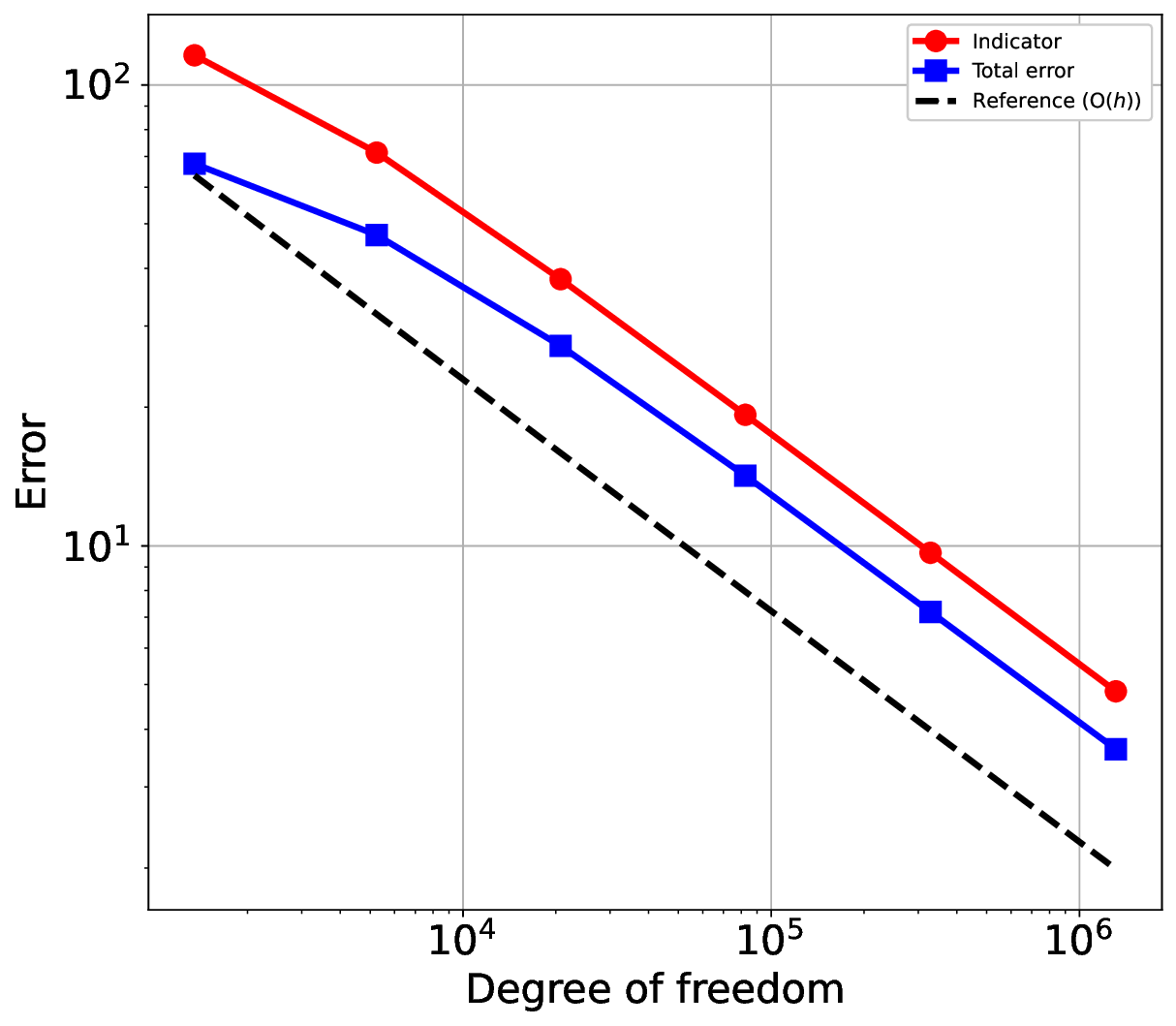}
		\includegraphics[scale=0.35]{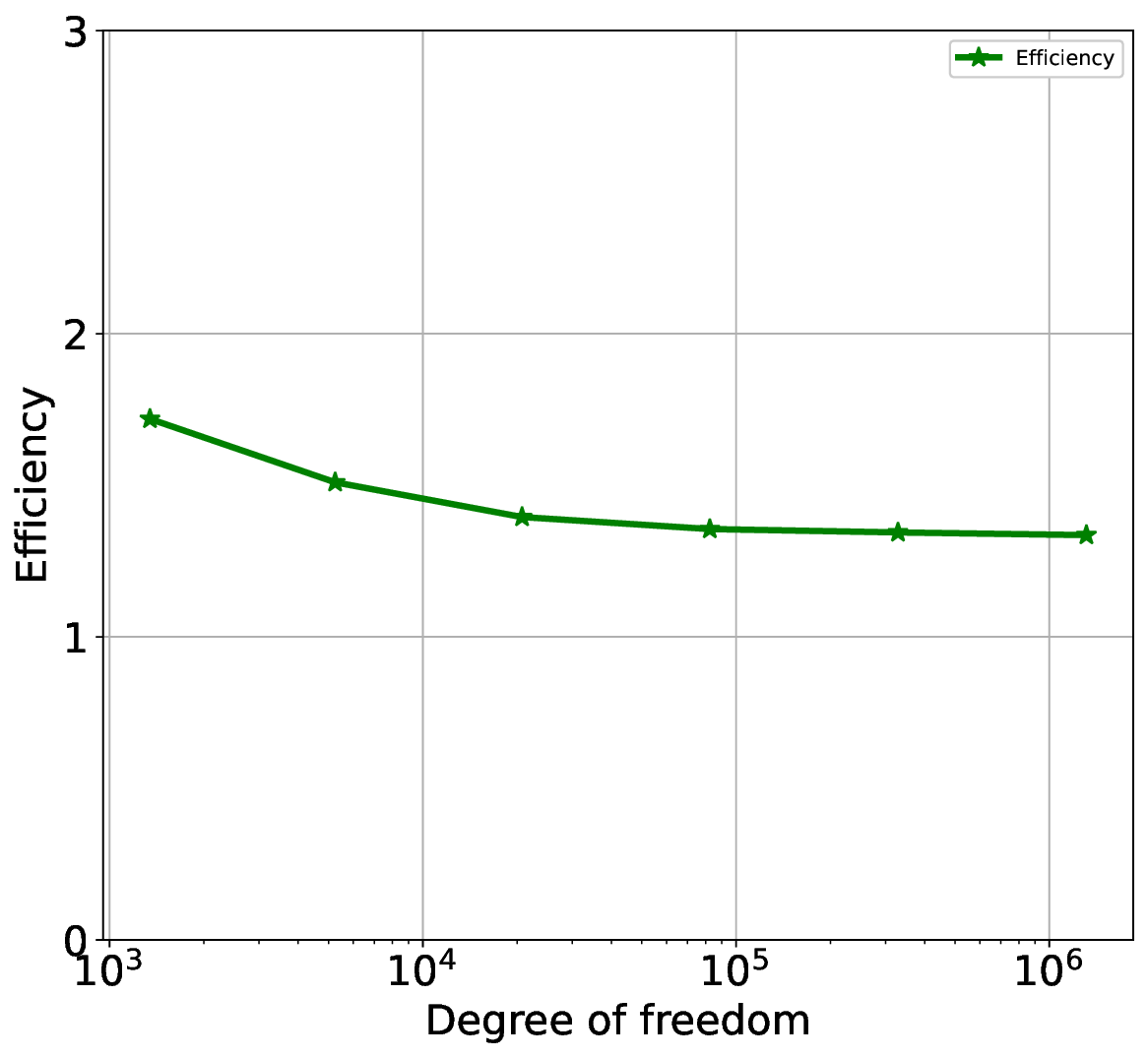}
	\end{center}
	\caption{Convergence plot for the indicator and total error (uniform refinement) for Example- \ref{Example 6.1.}.} \label{FIGURE 4}
\end{figure}
\subsection{Boundary layers problem}\label{Example 6.2.}
	 In this example, we consider the triangular domain $\Omega = \{(\xi_1,\xi_2): \xi_1>0, \ \xi_2>0, \ \xi_1 + \xi_2 <1\}$, with  the coefficients $\nu =0.01 \exp(-x_1^{2}-x_2^{2}),\ \boldsymbol{\beta} = (\xi_1,-\xi_2),\ \sigma = 1$, and control bounds $ \mathbf{u_a}=(0,0),$ $\mathbf{u_b}=(0.1,0.1)$. The source function $\mathbf{f}$ and desired function $\mathbf{y}_d$ are choosen such that
	\begin{align*}
		\y(\xi_1,\xi_2) &= \textbf{curl} \ \bigg(\xi_1 \xi_2^{2}(1-\xi_1-\xi_2)^{2} \bigg(1-\xi_1- \frac{\exp(-100\xi_1)-\exp(-100)}{1-\exp(-100)}\bigg)\bigg), \\
		\w(\xi_1,\xi_2) &=  \textbf{curl} \ \bigg(\xi_1^{2} \xi_2(1-\xi_1-\xi_2)^{2} \bigg(1-\xi_2- \frac{\exp(-100\xi_2)-\exp(-100)}{1-\exp(-100)}\bigg)\bigg),\\
		p(\xi_1,\xi_2) &= \frac{\cos(2\pi \xi_2)}{1024}, \ \ \ \ \ \   r(\xi_1,\xi_2) = \frac{\cos(2\pi \xi_1)}{1024},
	\end{align*}
are the optimal solutions. The presence of boundary layers in the solution hampers the convergence rates of the state and co-state variables under uniform refinement. To overcome this challenge, we implement an adaptive mesh refinement procedure, focusing on refining the region of boundary layers, as depicted in Figure-\ref{FIGURE 5}. Upon achieving sufficient refinement, we observe a commencement of optimal-rate decrease in both the global estimator $\Upsilon$ and the error $ |\!|\!|\mathbf{e}|\!|\!|_{\Omega} $ . With further refinements, the graph of the estimator $\Upsilon$ aligns parallel to the error $ |\!|\!|\mathbf{e}|\!|\!|_{\Omega} $, and the efficiency index exhibits a nearly constant behavior, as illustrated in Figure-\ref{FIGURE 7}. Optimal convergence rates for the state and co-state variables are restored, as shown in Figure-\ref{FIGURE 6}. This substantiates the numerical reliability and efficiency of the proposed error estimator. The numerical solution plots are presented in Figure-\ref{FIGURE 8}.
\begin{figure}
	\begin{center}
		\includegraphics[scale=0.48]{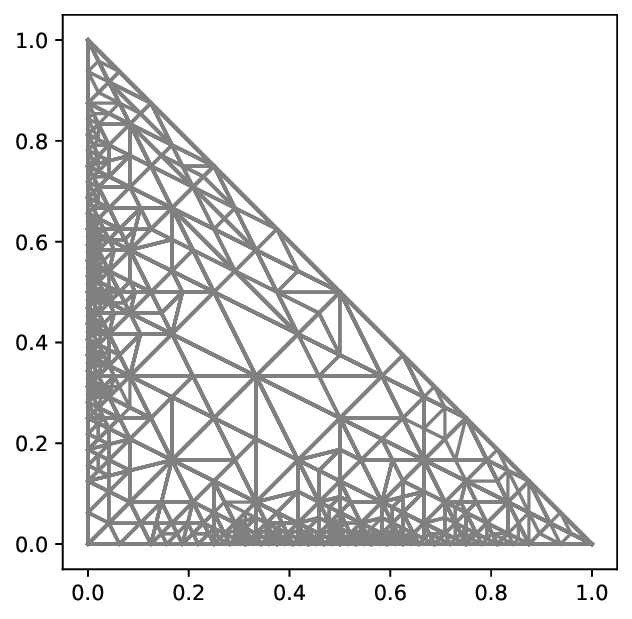}
		\includegraphics[scale=0.48]{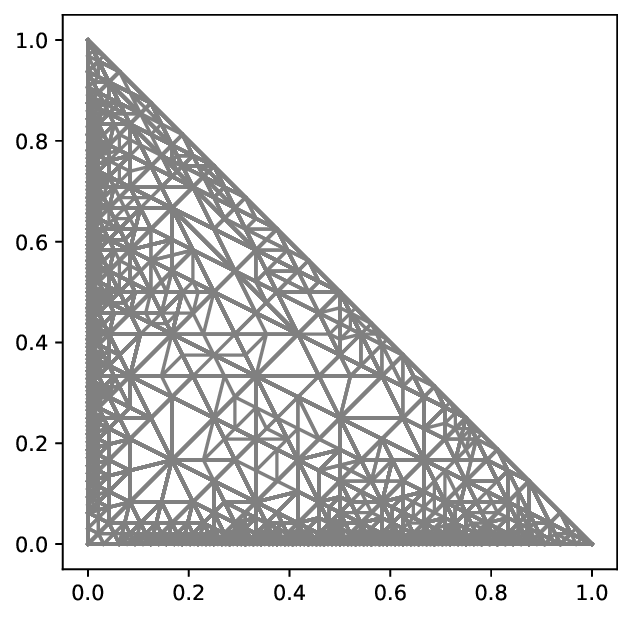}
		\includegraphics[scale=0.48]{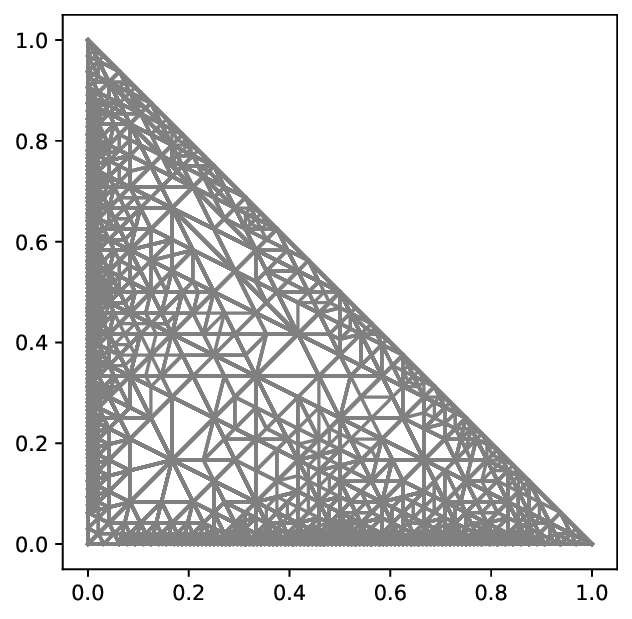}
	\end{center} \vspace{-3mm}
	\caption{Adaptively refined meshes (a) 5784 DOF (b) 17380 DOF (c)  27948 DOF (showing the boundary layers).} \label{FIGURE 5}
\end{figure}
\begin{figure}
	\begin{center}
		\includegraphics[scale=0.35]{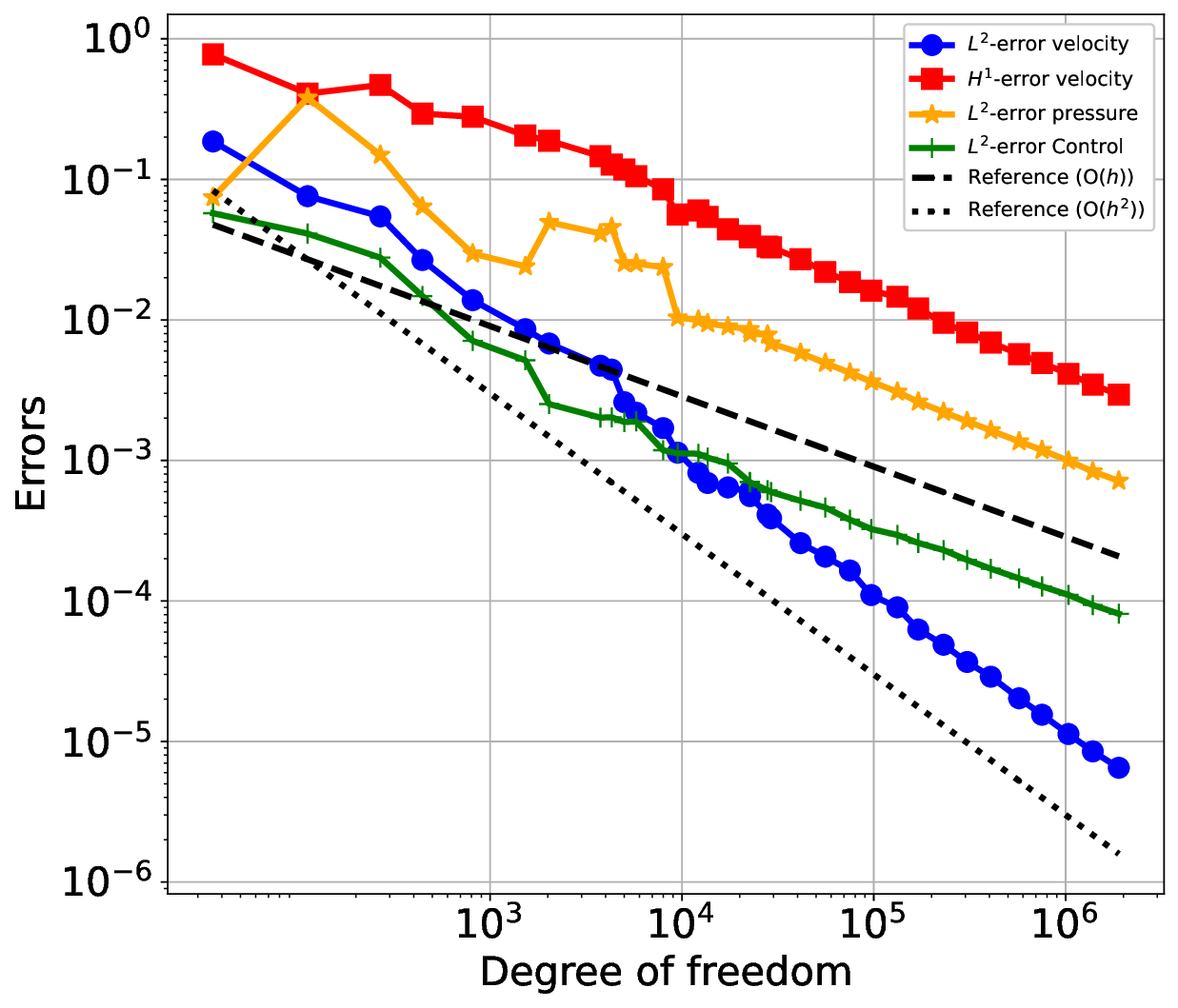}
		\includegraphics[scale=0.35]{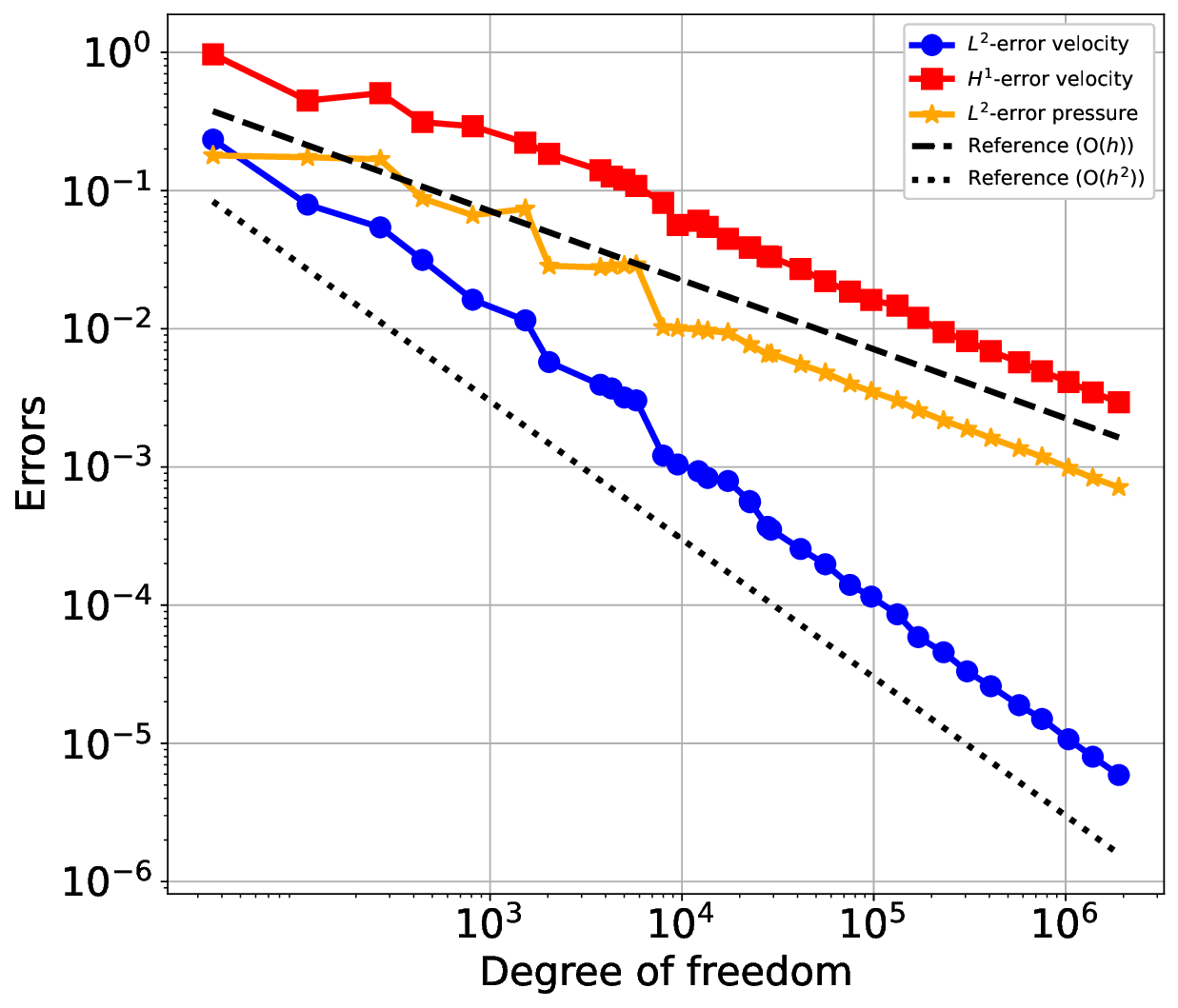} 
	\end{center}
	\caption{Convergence plots for the state and co-state variables (adaptive refinement) for Example- \ref{Example 6.2.}.} \label{FIGURE 6}
\end{figure}
\begin{figure}
	\begin{center}
		\includegraphics[scale=0.35]{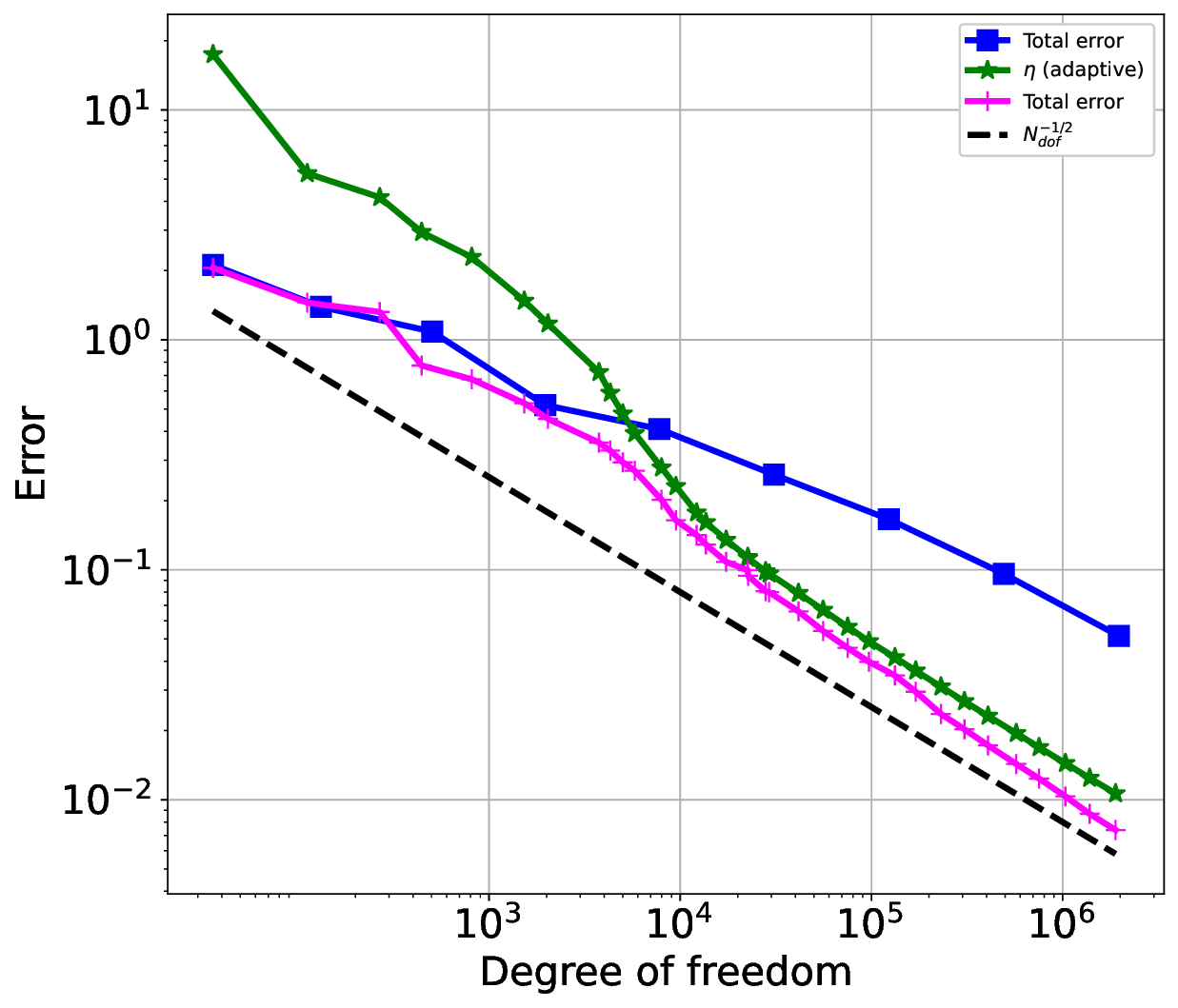}
		\includegraphics[scale=0.35]{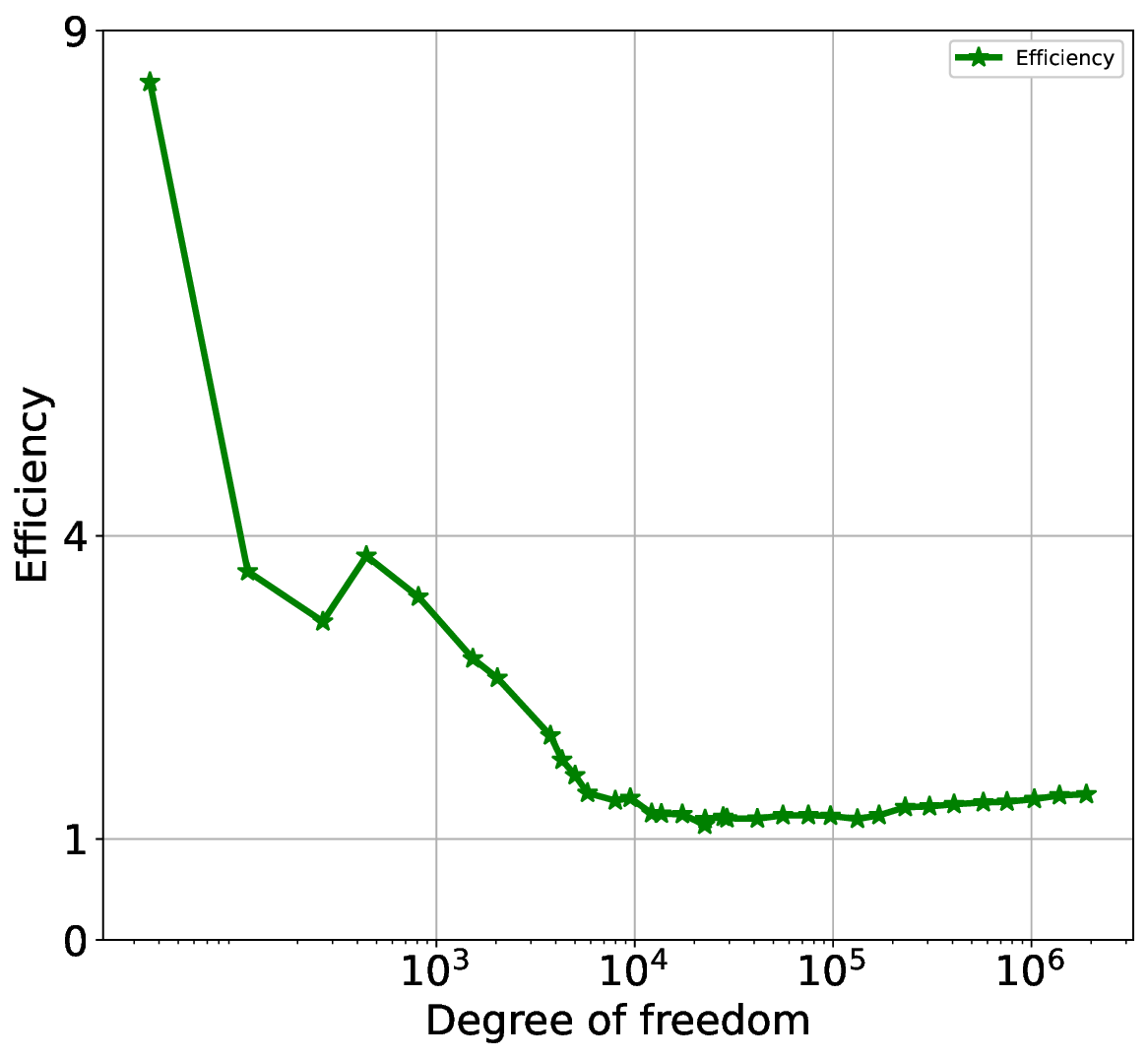}
	\end{center} 
	\caption{Convergence plots of indicator-total error and efficency for Example- \ref{Example 6.2.}.} \label{FIGURE 7}
\end{figure}
\begin{figure}
	\begin{center}
		\includegraphics[scale=0.52]{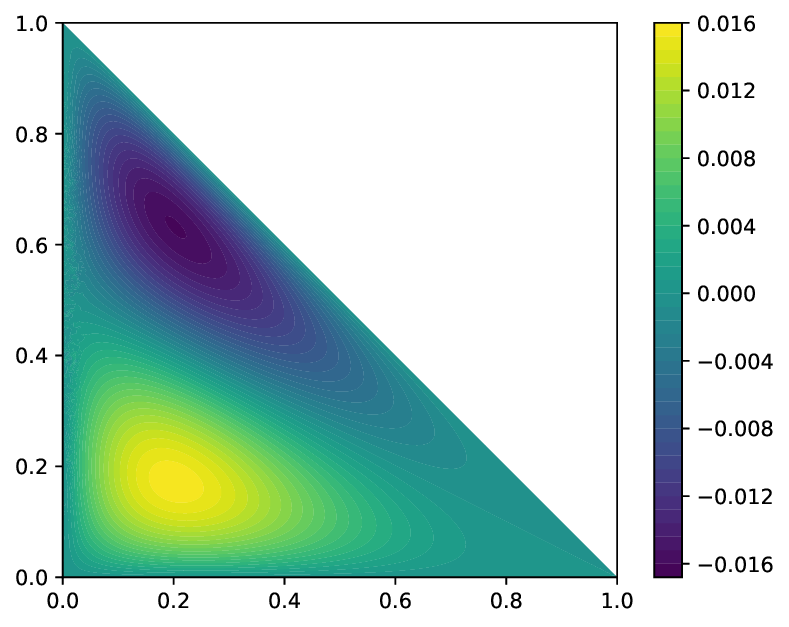}
		\includegraphics[scale=0.52]{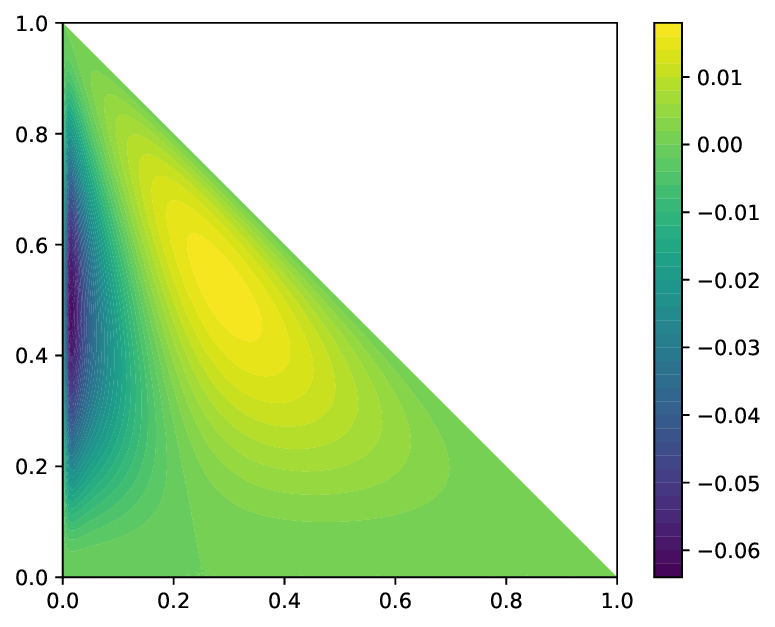}\\ 
		\includegraphics[scale=0.52]{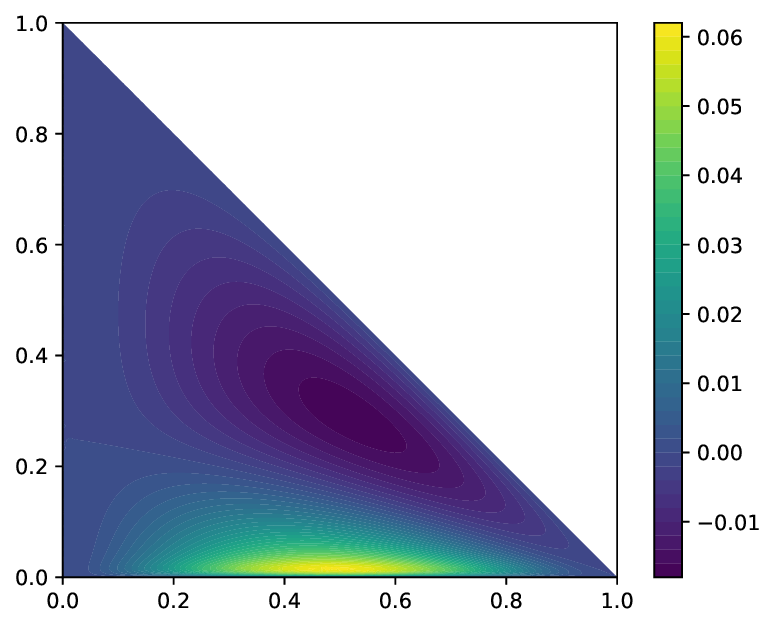}
		\includegraphics[scale=0.52]{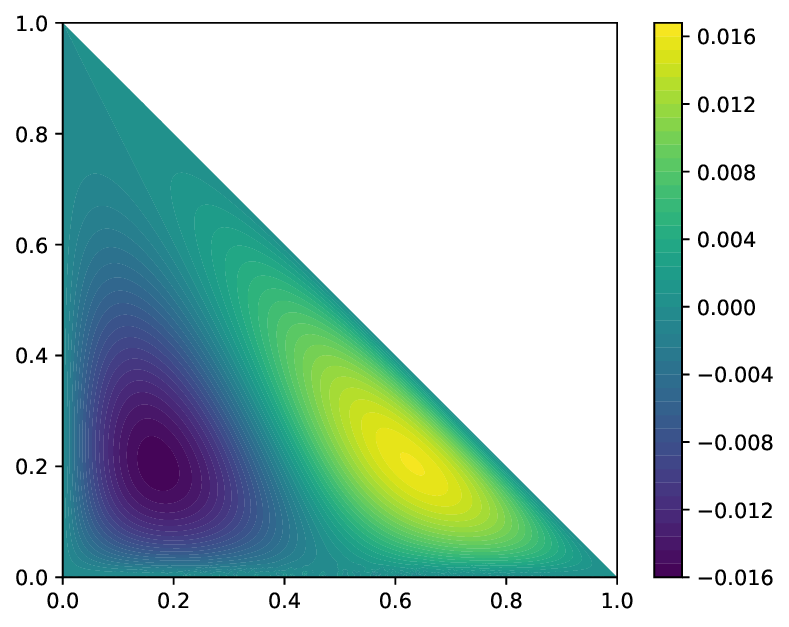}\\
		\includegraphics[scale=0.52]{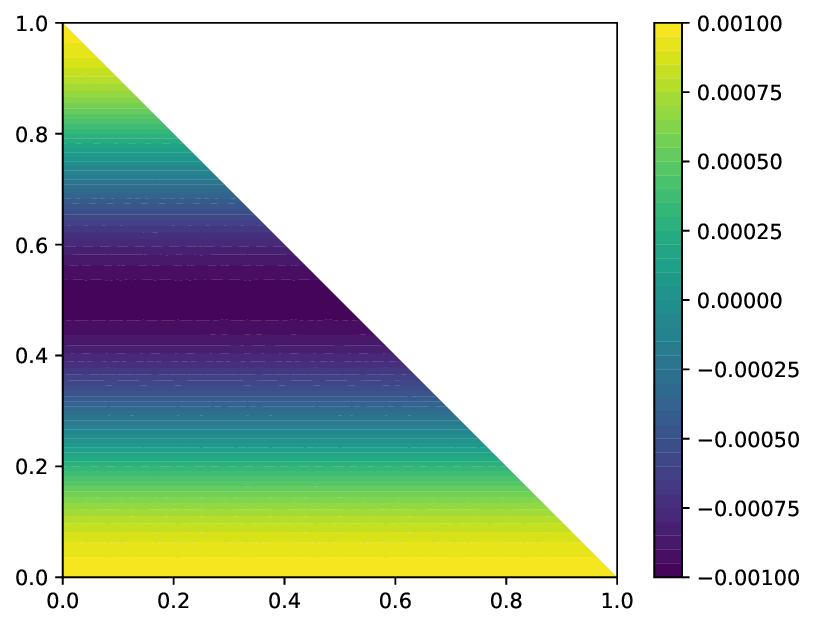}
		\includegraphics[scale=0.52]{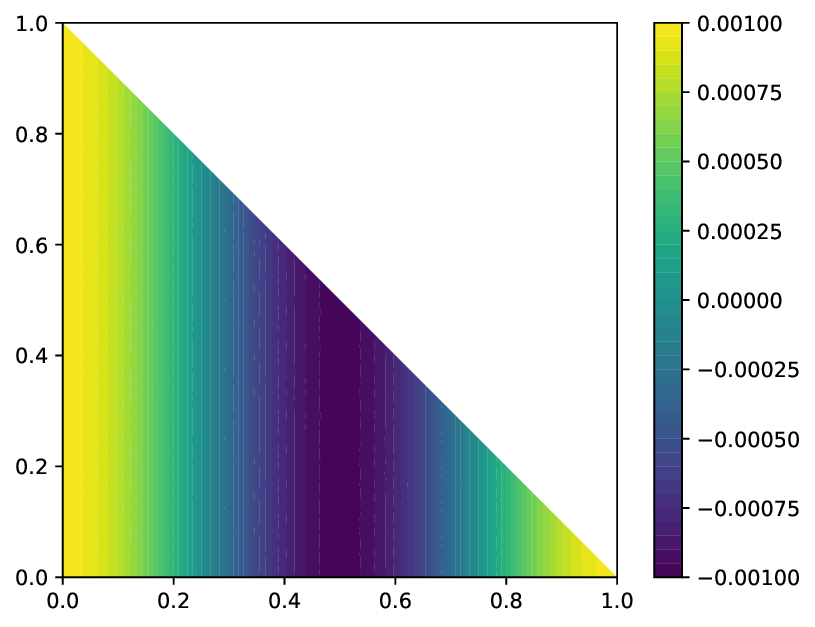} \\
		\includegraphics[scale=0.52]{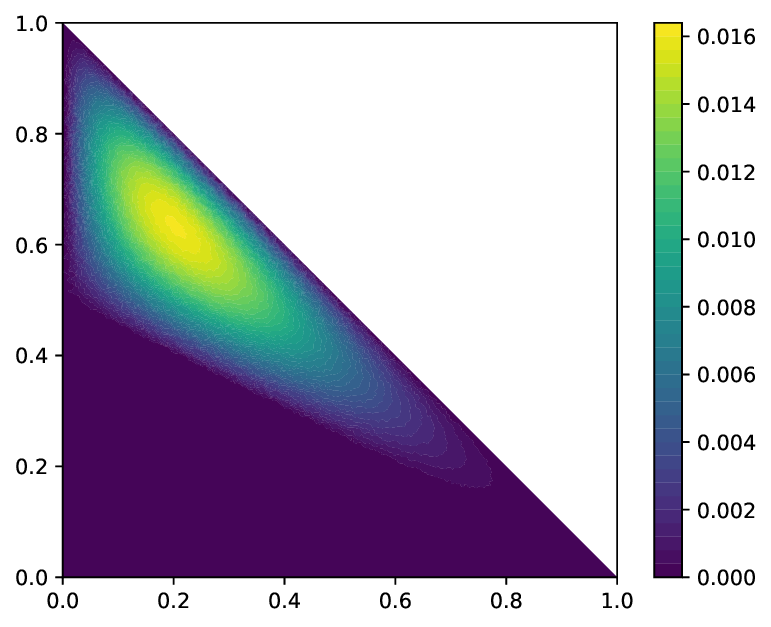}
		\includegraphics[scale=0.52]{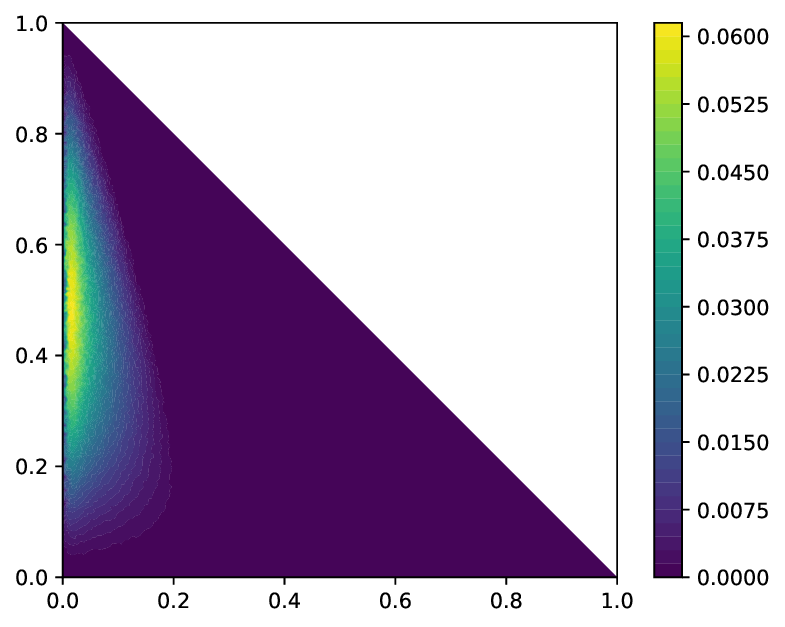}
	\end{center} \vspace{-1.25mm}
	\caption{Plots of numerical solutions of state velocity $(\y_{h1},\y_{h2})$, co-state velocity $(\w_{h1},\w_{h2})$, state pressure $(p_h)$, co-state pressure $(r_h)$, and control $(\u_{h1},\u_{h2})$, respectively, for Example- \ref{Example 6.2.}.} \label{FIGURE 8}
\end{figure}
\subsection{L-shaped and T-shaped domains}\label{Example 6.3.}
	In this example, we consider the non-convex L-shaped domain $\Omega = (-1,1)^{2} \setminus ([0,1) \times (-1,0])$, and the T-shaped domain $\Omega = ((-1.5,1.5) \times (0,1)) \cup ((-0.5,0.5) \times (-2,0])$, with coefficients $\nu = 1,\ \boldsymbol{\beta} = (\xi_1,\xi_2),\ \sigma = 0$, and control bounds $ \mathbf{u_a}=(0,0),$ $\mathbf{u_b}=(0.1,0.1)$. We opt for the source function $\f=(1,1)$ and the desired function $\mathbf{y}_d = (\xi_2,-\xi_2)$. Although the exact solutions for these problems are unknown, through an adaptive refinement process, we observe increased refinements in regions proximate to the reentrant corners  as shown inFigures-\ref{FIGURE 9} and \ref{FIGURE 11}. Following a sufficient number of refinements, the global estimator $\Upsilon$ initiates a decline at the optimal rate, as depicted in Figure-\ref{FIGURE 10}. The numerical solution plots are presented in Figure-\ref{FIGURE LP} and \ref{FIGURE TP}. 
\begin{figure}
	\begin{center}
		\includegraphics[scale=0.45]{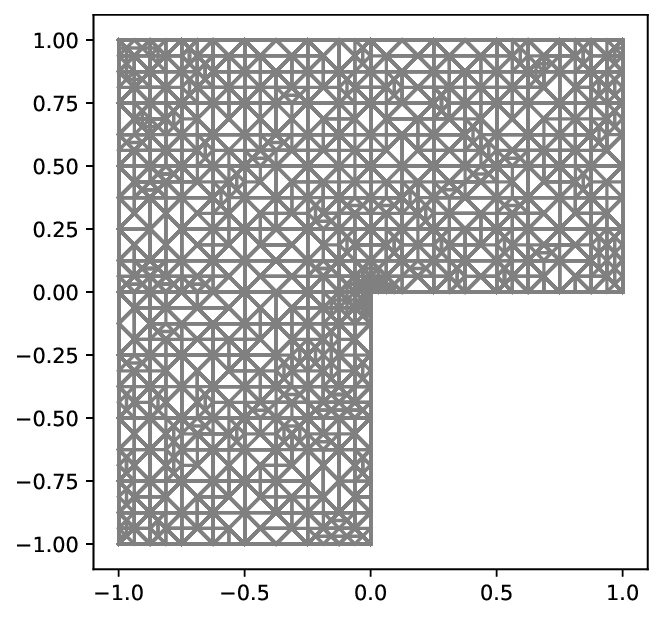}
		\includegraphics[scale=0.45]{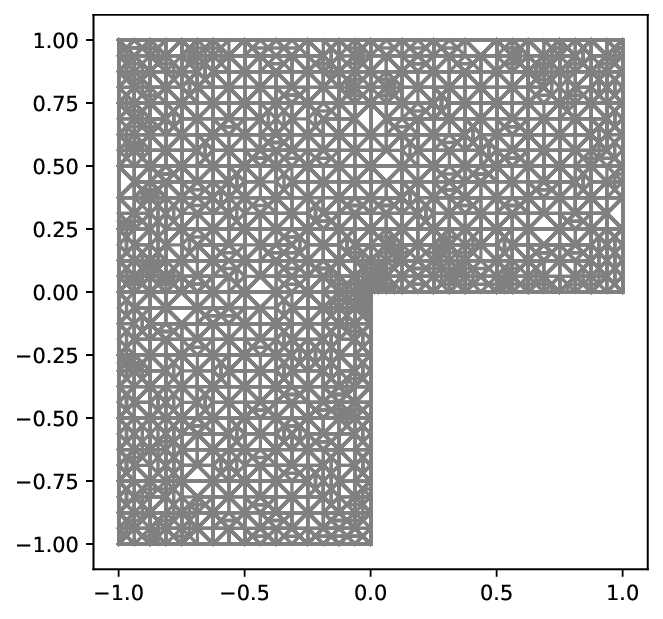}
		\includegraphics[scale=0.45]{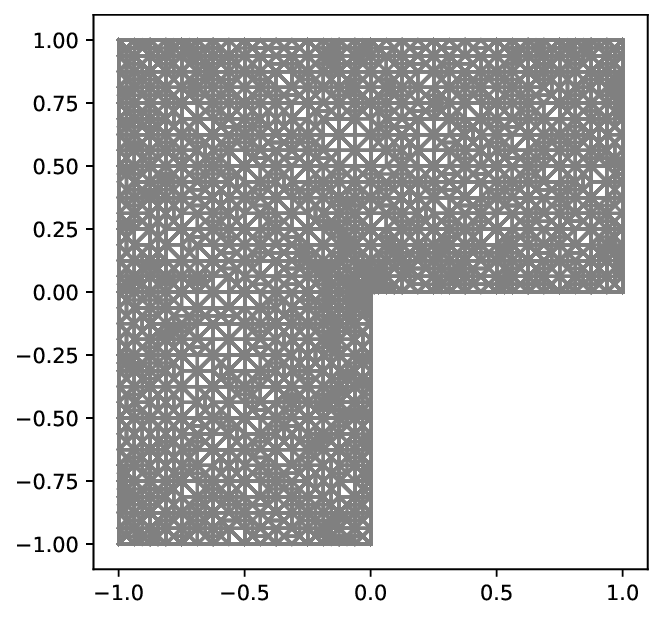}
	\end{center} \vspace{-3mm}
	\caption{Adaptively refined meshes with 5092 DOF, 21972 DOF and 80288 DOF for Example- \ref{Example 6.3.}.} \label{FIGURE 9}
\end{figure}
\begin{figure}
	\begin{center}
		\includegraphics[scale=0.27]{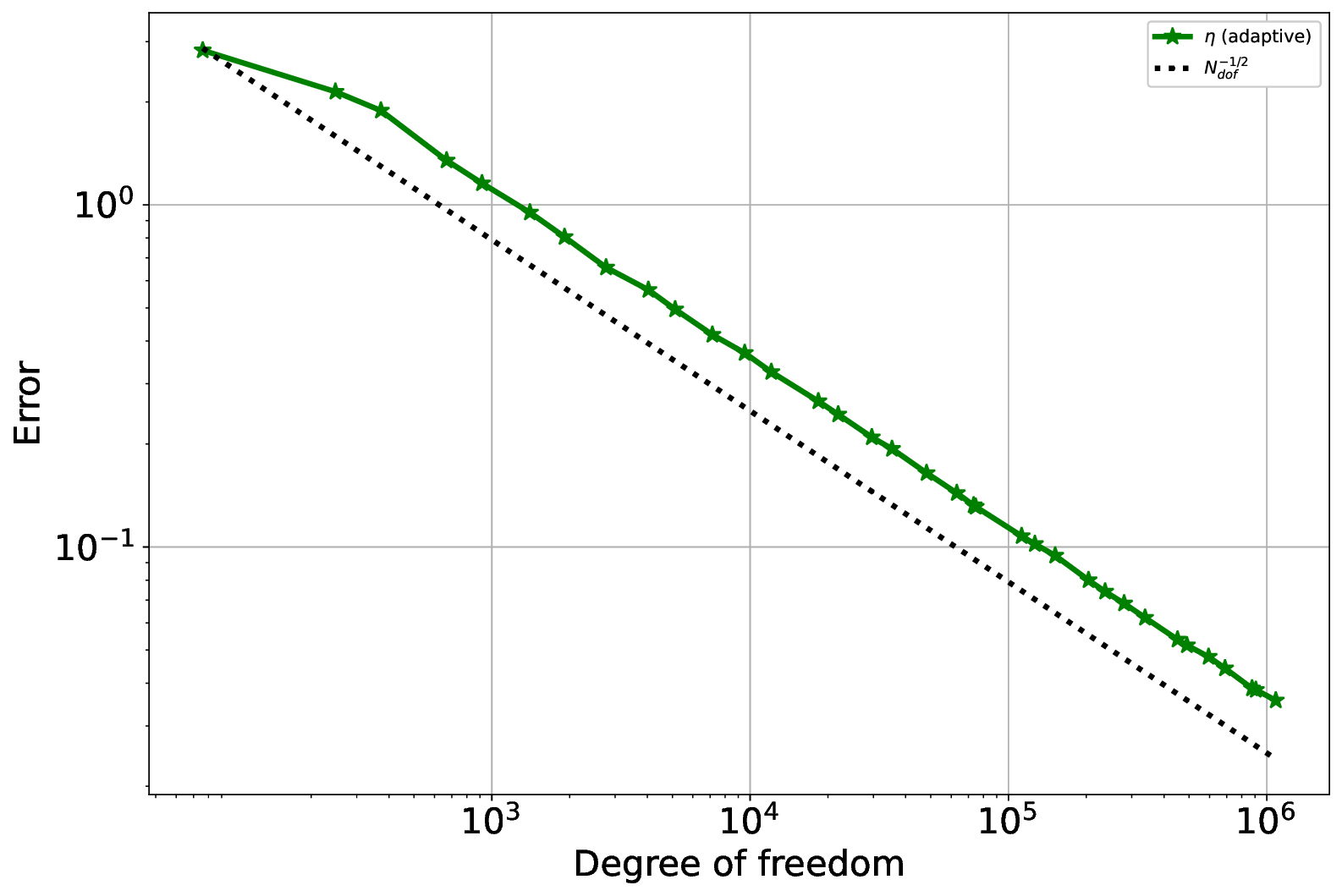}
		\includegraphics[scale=0.27]{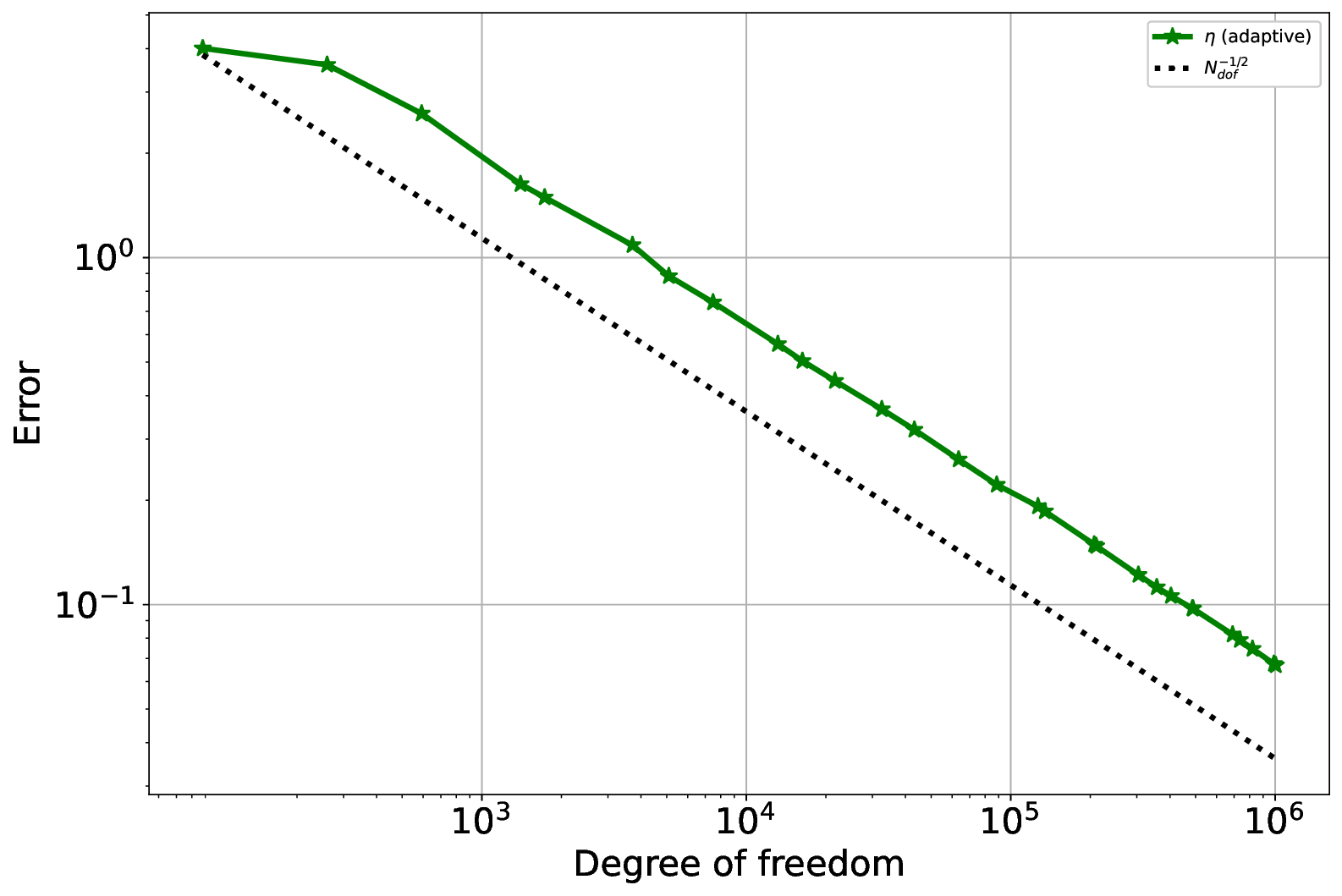} 
	\end{center} \vspace{-3mm}
	\caption{Convergence plots for the indicator under adaptive refinement for L-shape and T-shape Example- \ref{Example 6.3.}.} \label{FIGURE 10}
\end{figure}
\begin{figure}
	\begin{center}
			\includegraphics[scale=0.45]{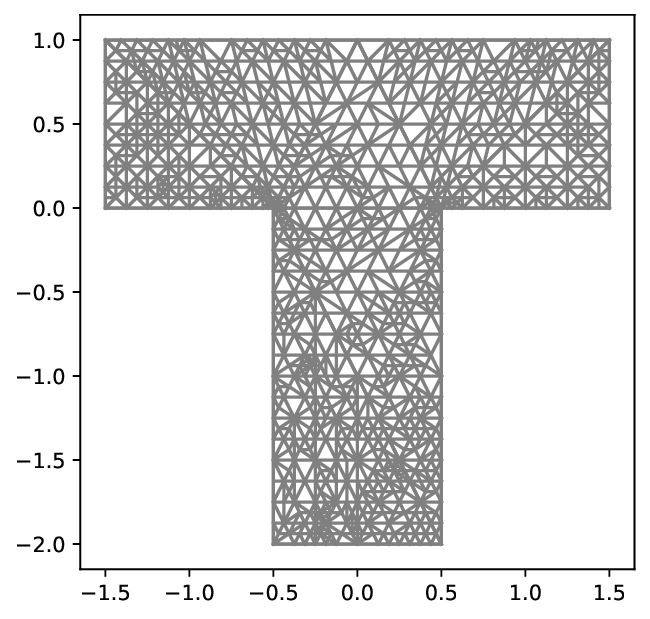} 
			\includegraphics[scale=0.45]{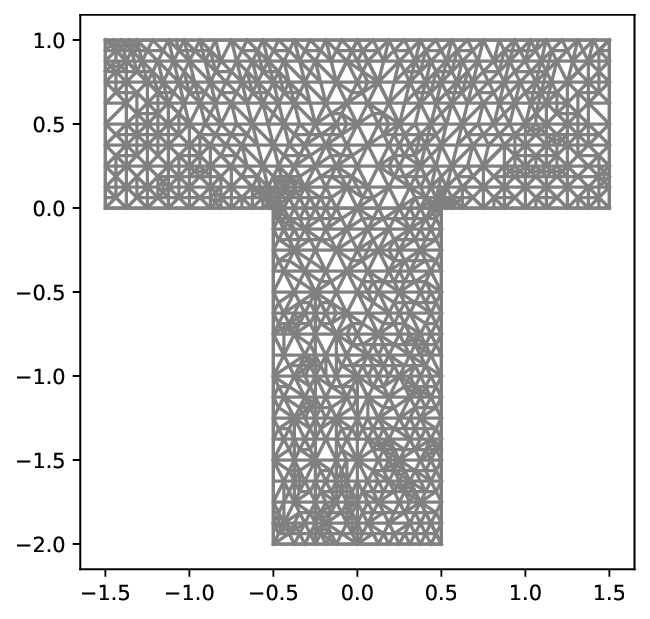}
			\includegraphics[scale=0.45]{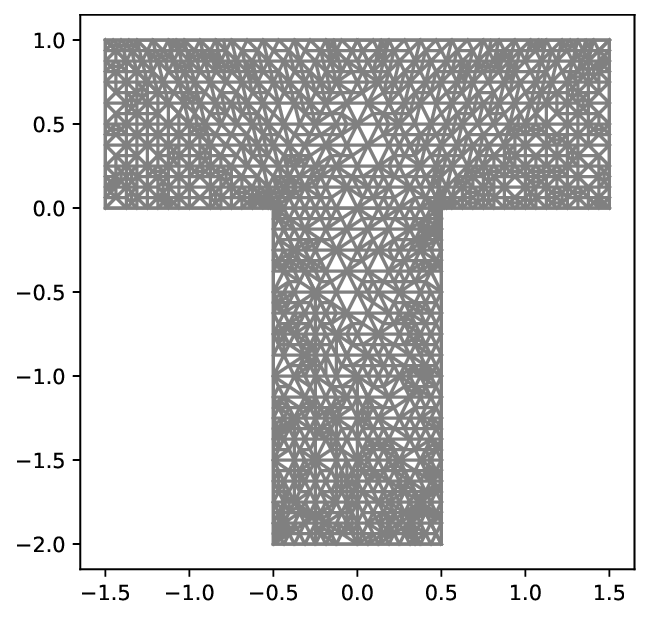} 
		\end{center} \vspace{-3mm}
	\caption{Adaptively refined meshes (a) 16304 DOF (b) 21644 DOF (c) 32552 DOF for Example \ref{Example 6.3.}.} \label{FIGURE 11}
\end{figure}
\begin{figure}
	\begin{center}
		\includegraphics[scale=0.52]{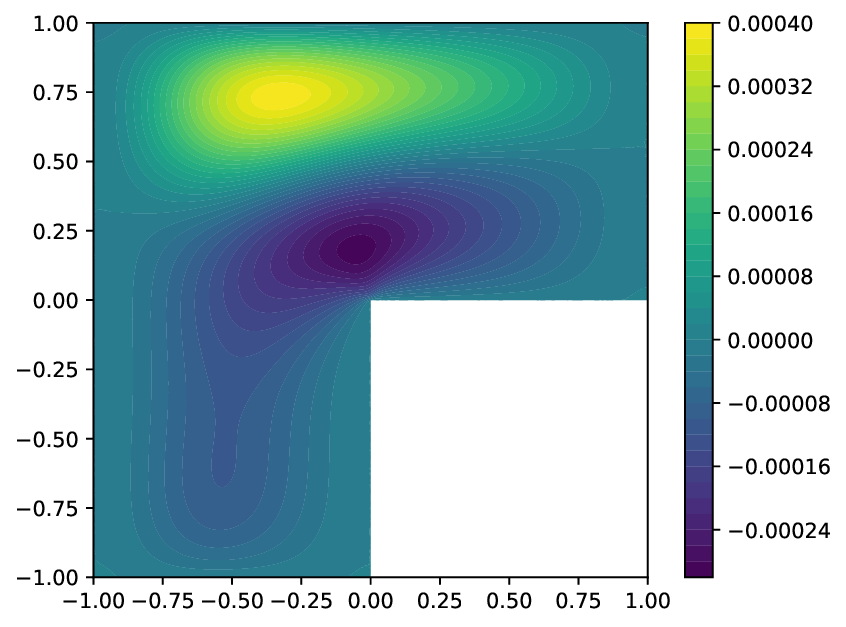}
		\includegraphics[scale=0.52]{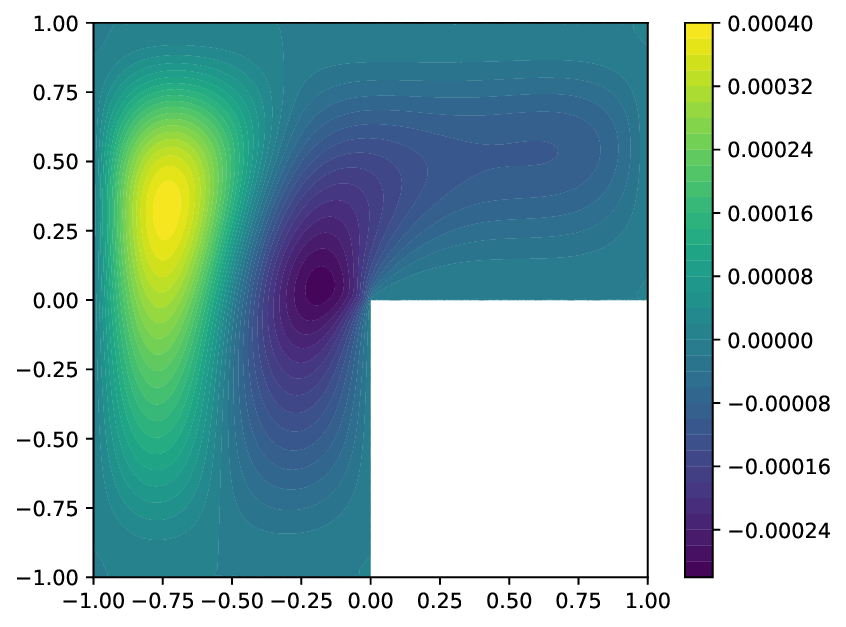} \\ 
		\includegraphics[scale=0.52]{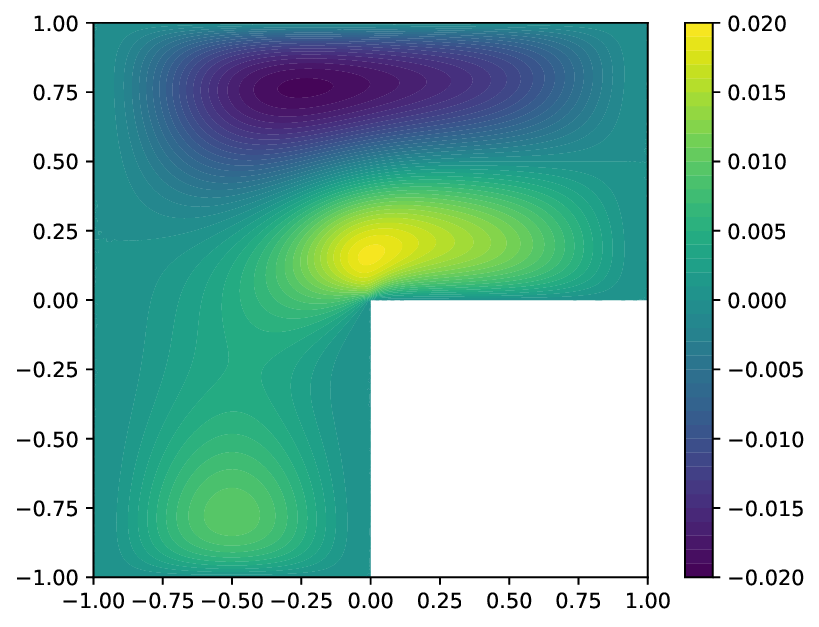}
		\includegraphics[scale=0.52]{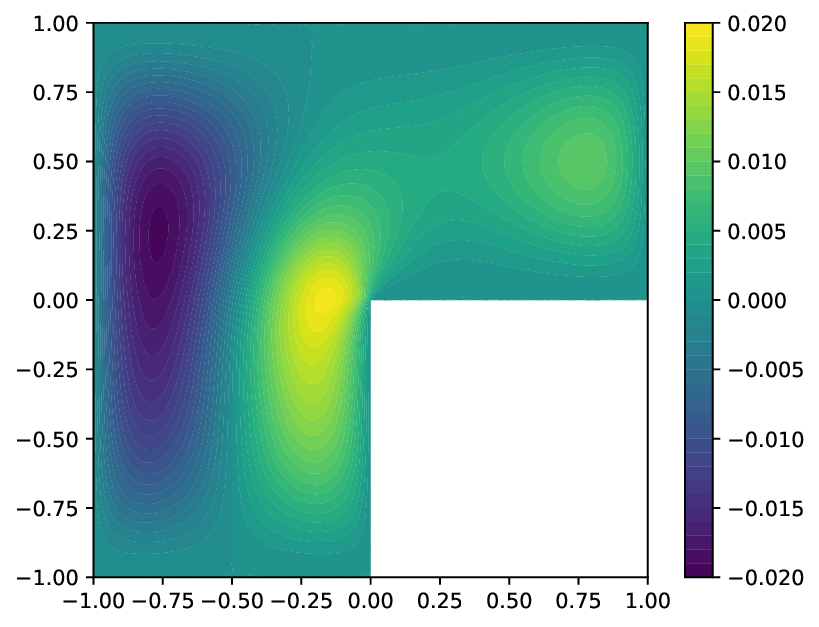}\\
		\includegraphics[scale=0.52]{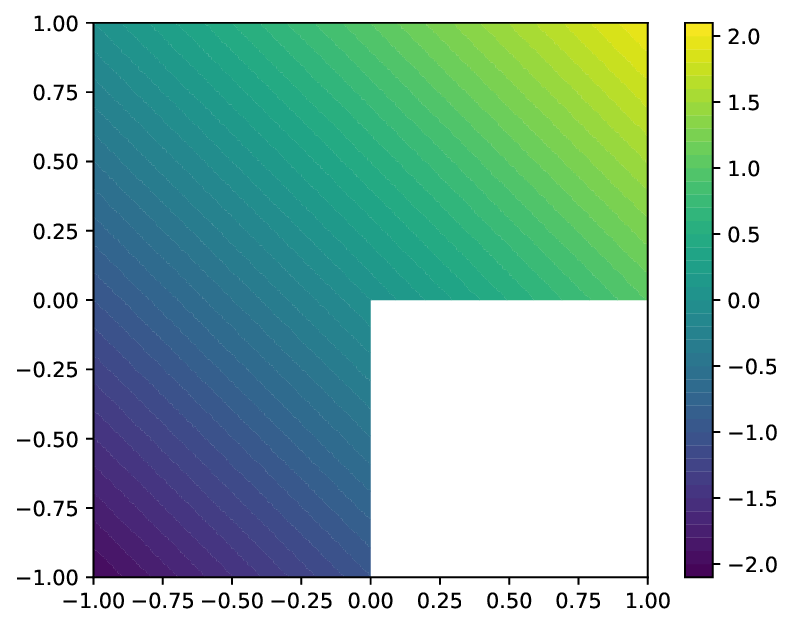}
		\includegraphics[scale=0.52]{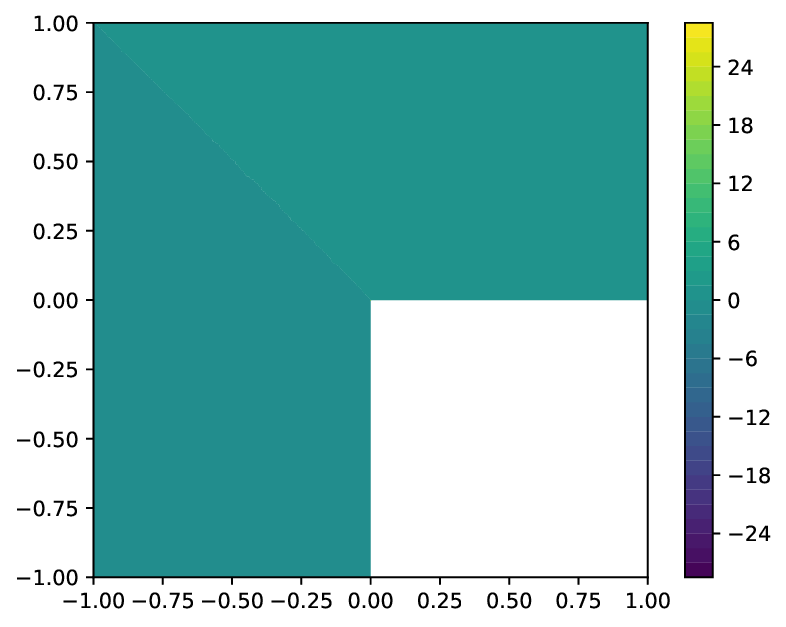} \\ 
		\includegraphics[scale=0.52]{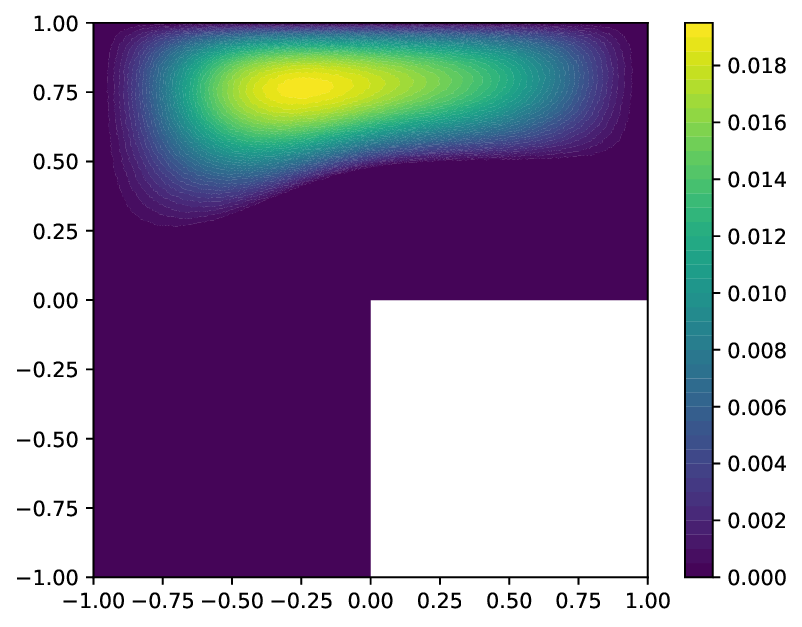}
		\includegraphics[scale=0.52]{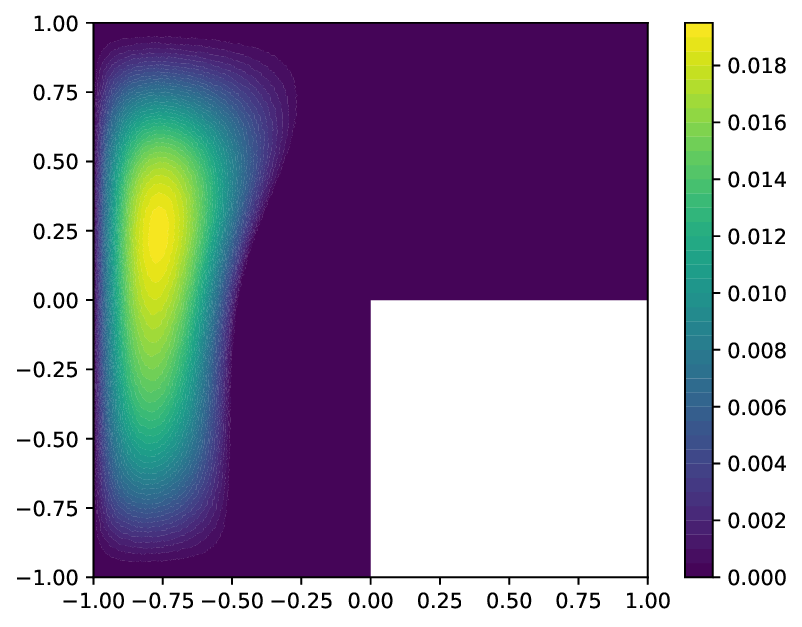}
	\end{center} \vspace{-1.2mm}
	\caption{Plots of numerical solutions of state velocity $(\y_{h1},\y_{h2})$, co-state velocity $(\w_{h1},\w_{h2})$, state pressure $(p_h)$, co-state pressure $(r_h)$, and control $(\u_{h1},\u_{h2})$, respectively, for Example- \ref{Example 6.3.}.} \label{FIGURE LP}
\end{figure}
\begin{figure}
	\begin{center}
		\includegraphics[scale=0.52]{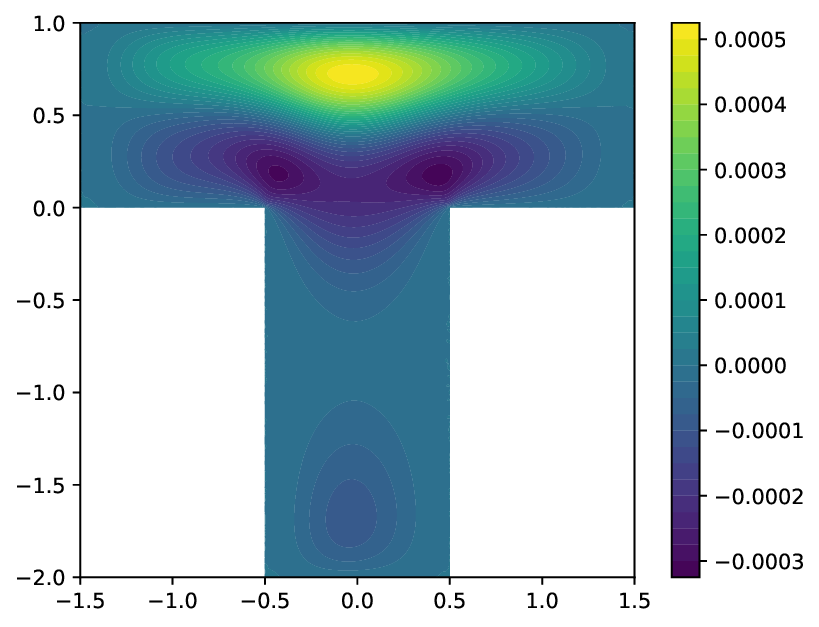}
		\includegraphics[scale=0.52]{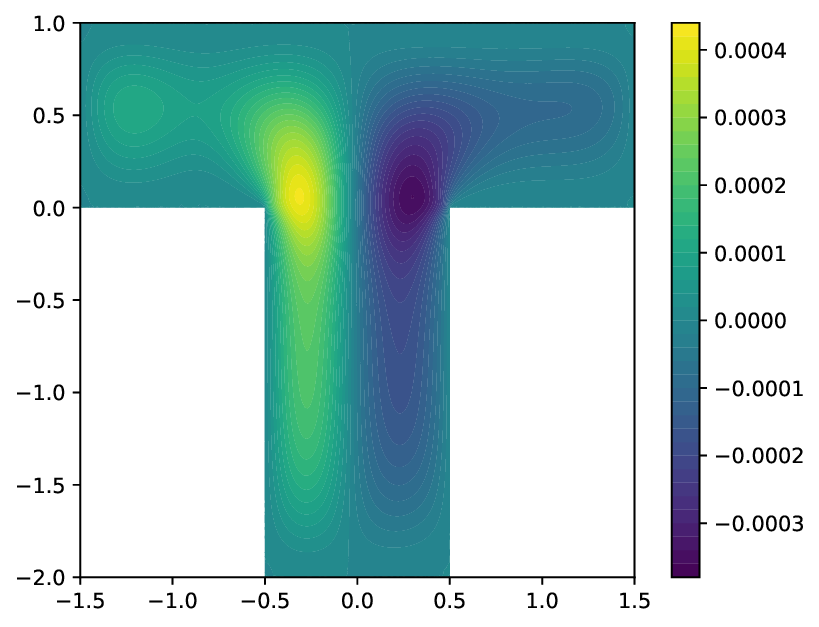} \\ 
		\includegraphics[scale=0.52]{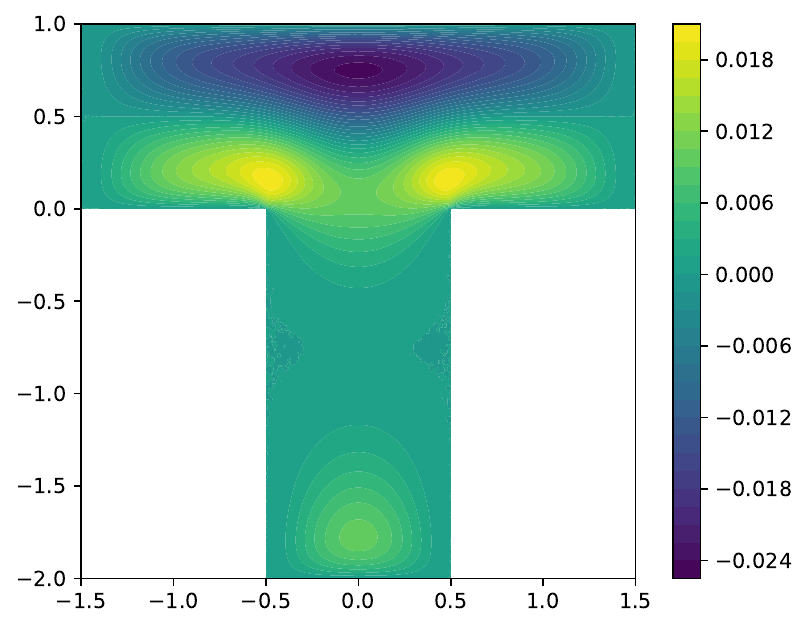}
		\includegraphics[scale=0.52]{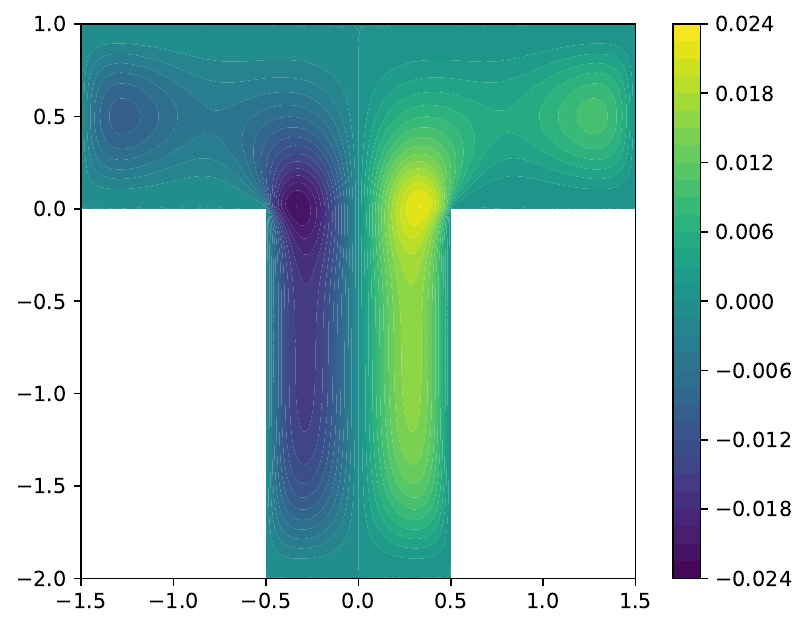}\\
		\includegraphics[scale=0.52]{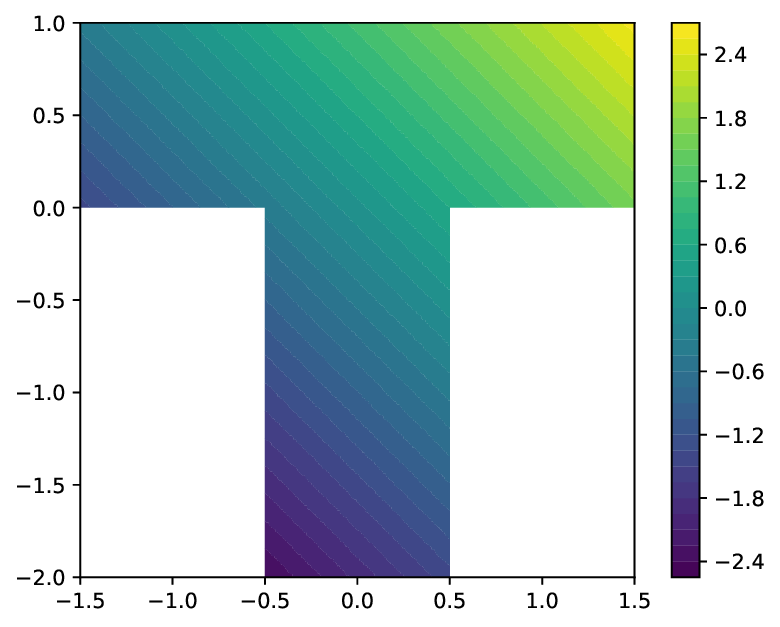}
		\includegraphics[scale=0.52]{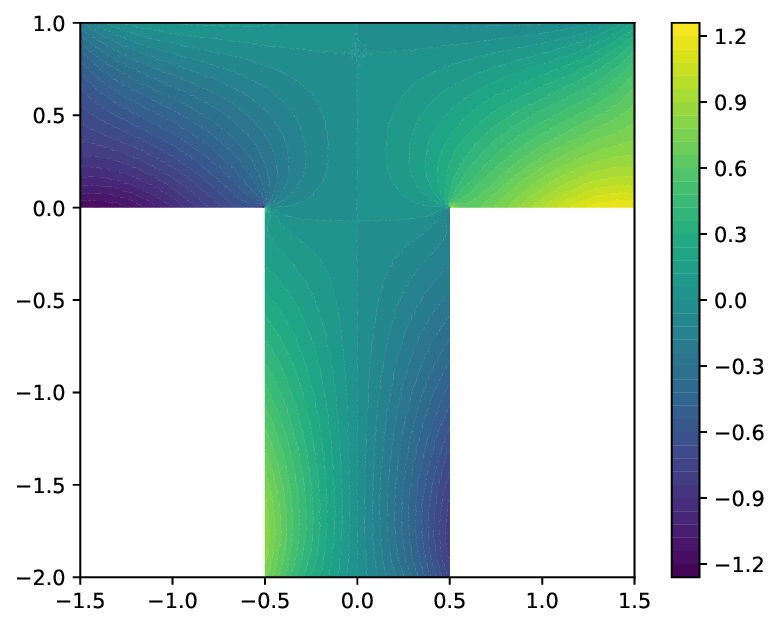} \\ 
		\includegraphics[scale=0.52]{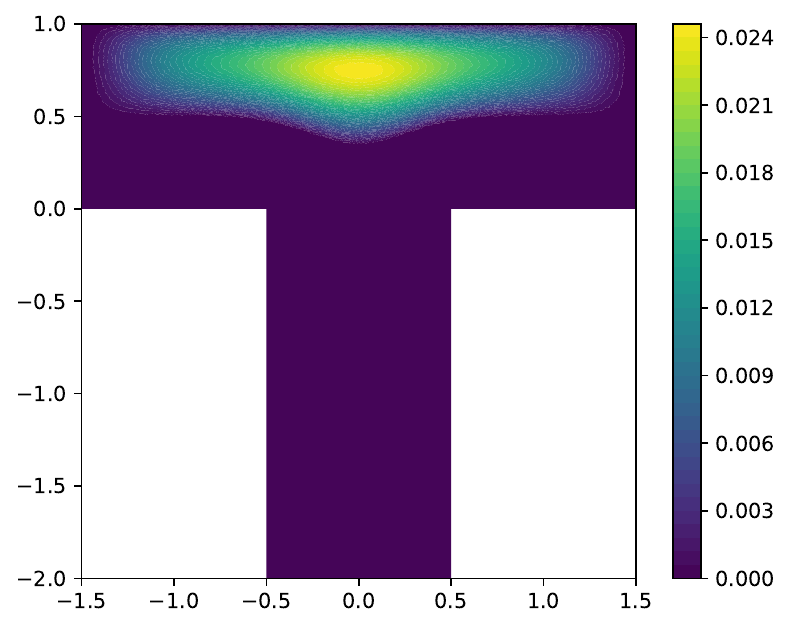}
		\includegraphics[scale=0.52]{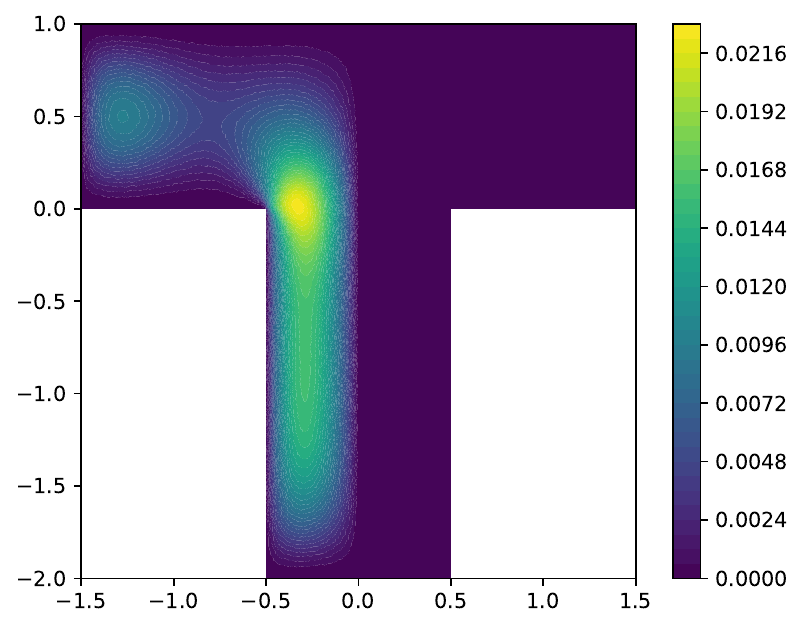}
	\end{center} \vspace{-1.5mm}
	\caption{Plots of numerical solutions of state velocity $(\y_{h1},\y_{h2})$, co-state velocity $(\w_{h1},\w_{h2})$, state pressure $(p_h)$, co-state pressure $(r_h)$, and control $(\u_{h1},\u_{h2})$, respectively, for Example- \ref{Example 6.3.}.} \label{FIGURE TP}
\end{figure}
\subsection{Three-dimensional example}\label{Example 6.5.}
	Consider the cubical domain $\Omega = (0,1)^{3}$ with coefficients $\nu =1+0.01 \xi_3^{3},\ \boldsymbol{\beta} (\xi_1,\xi_2,\xi_3) = (\xi_2-\xi_3,\xi_3-\xi_1,\xi_1-\xi_2),\ \sigma = 1,\ \mathbf{u_a}=(0,0,0)$ and $\mathbf{u_b}=(0.1,0.1,0.1)$. The source function $\mathbf{f}$ and desired function $\mathbf{y}_d$ are choosen such that 
	\begin{align*}
		\y(\xi_1,\xi_2,\xi_3= \w(\xi_1,\xi_2,\xi_3) &= \textbf{curl} \ \big(0.5 \pi (\sin(\pi \xi_1) \sin(\pi \xi_2)\sin(\pi \xi_3))^{2}\big)\\
		 p(\xi_1,\xi_2,\xi_3) = r(\xi_1,\xi_2,\xi_3) &= \sin(2\pi \xi_1)\sin(2\pi \xi_2)\sin(2\pi \xi_3),
	\end{align*}
	are the exact solutions. We attain optimal convergence rates for state, co-state, and control variables through uniform refinement, as presented in Figure-\ref{FIGURE 15}. Both the global estimator $\Upsilon$ and the total error $|\!|\!|\mathbf{e}|\!|\!|_{\Omega}$ exhibit a decrease at the optimal rate, as depicted in Figure-\ref{FIGURE 16}. The graph of the estimator $\Upsilon$ aligns parallel to the error $|\!|\!|\mathbf{e}|\!|\!|_{\Omega}$, and the efficiency index demonstrates consistent behavior. This observation validates the numerical reliability and efficiency of the proposed error estimator. Plots showcasing the numerical solutions of the state and co-state variables are provided in Figure-\ref{FIGURE 14}.
\begin{figure}
	\begin{center}
		\includegraphics[scale=0.09]{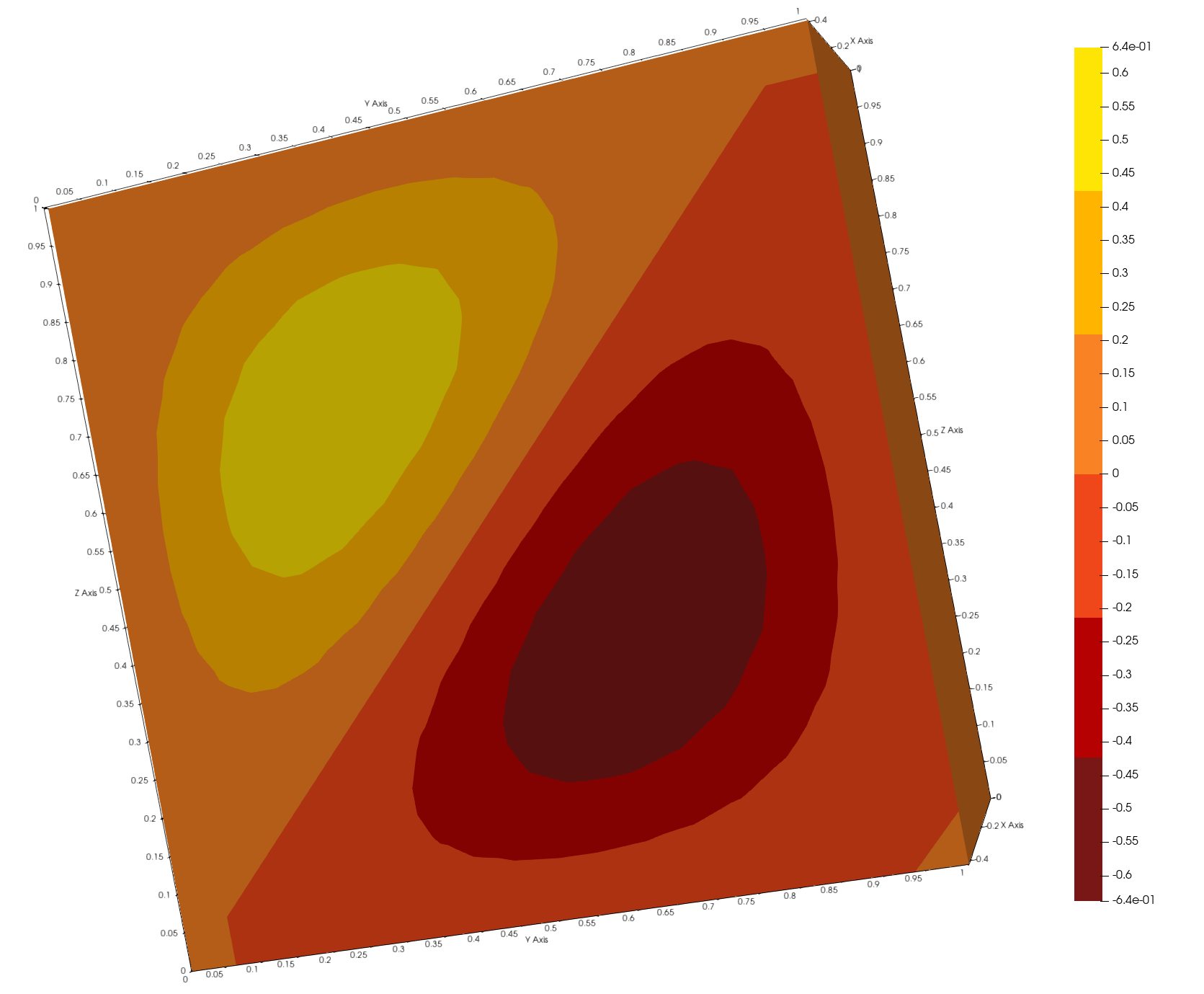}
		\includegraphics[scale=0.095]{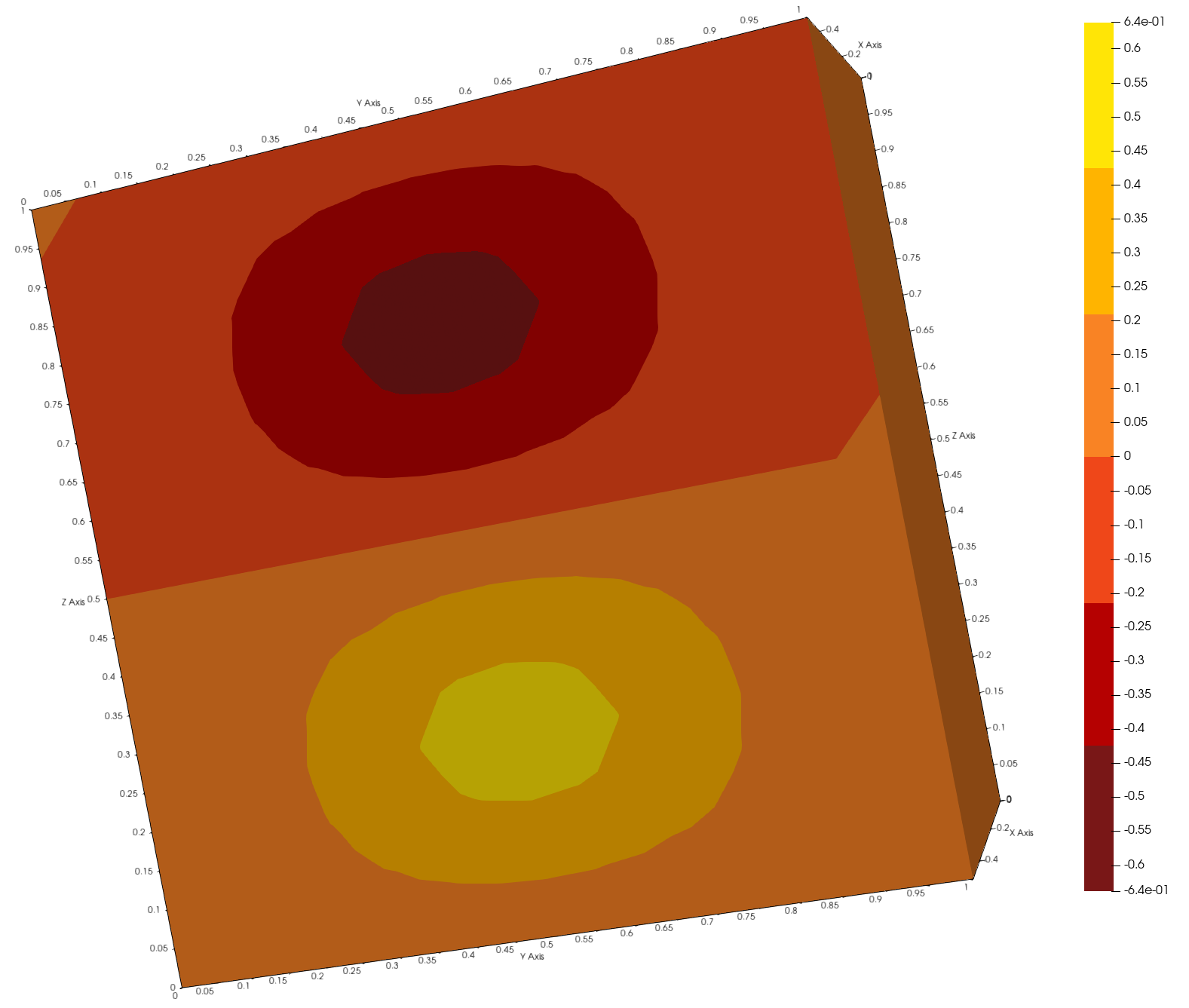}
		\includegraphics[scale=0.09]{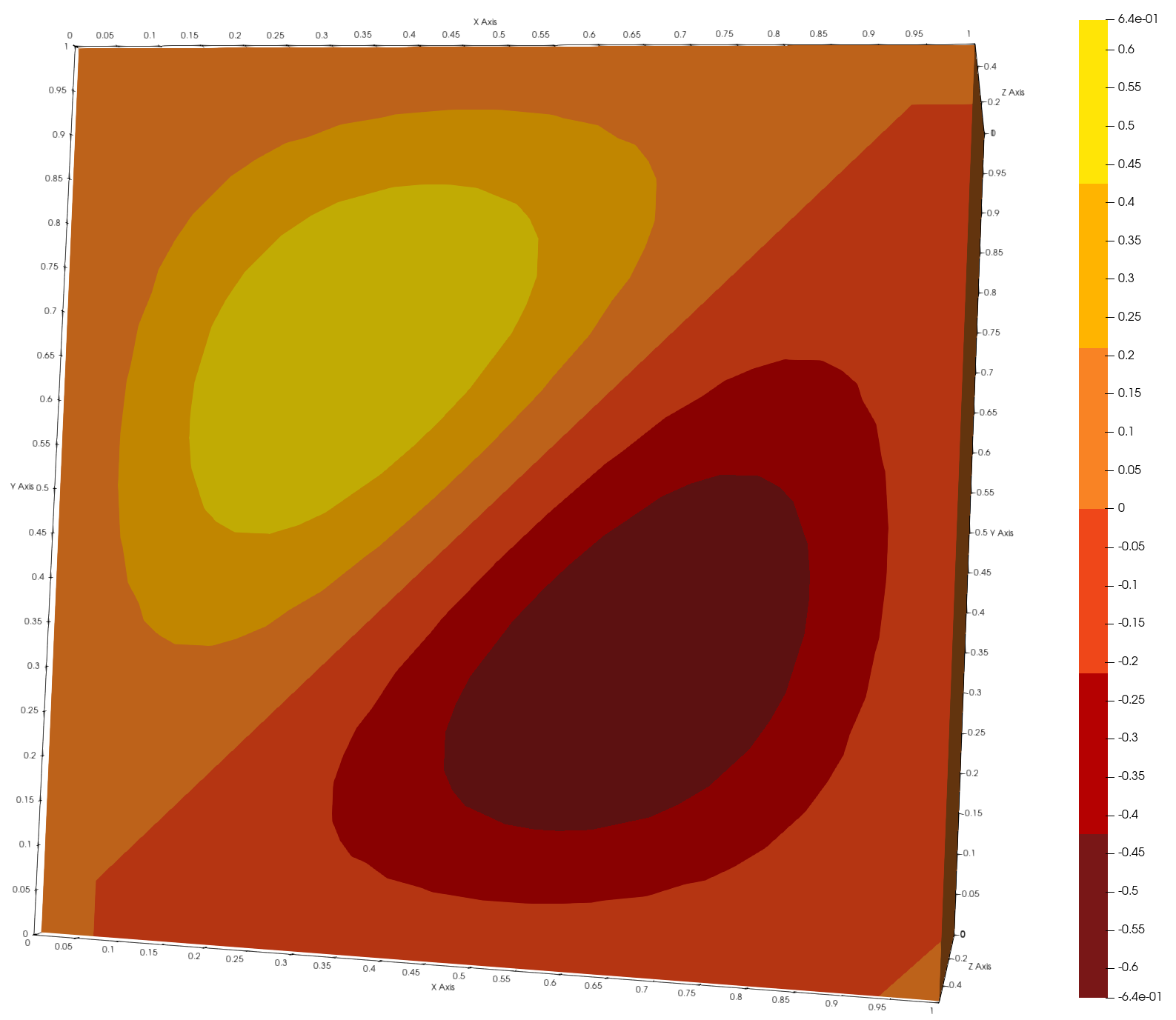}\\ 
		\includegraphics[scale=0.09]{yh_41.png}
		\includegraphics[scale=0.09]{yh_42.png}
		\includegraphics[scale=0.087]{yh_43.png}\\  
		\includegraphics[scale=0.1]{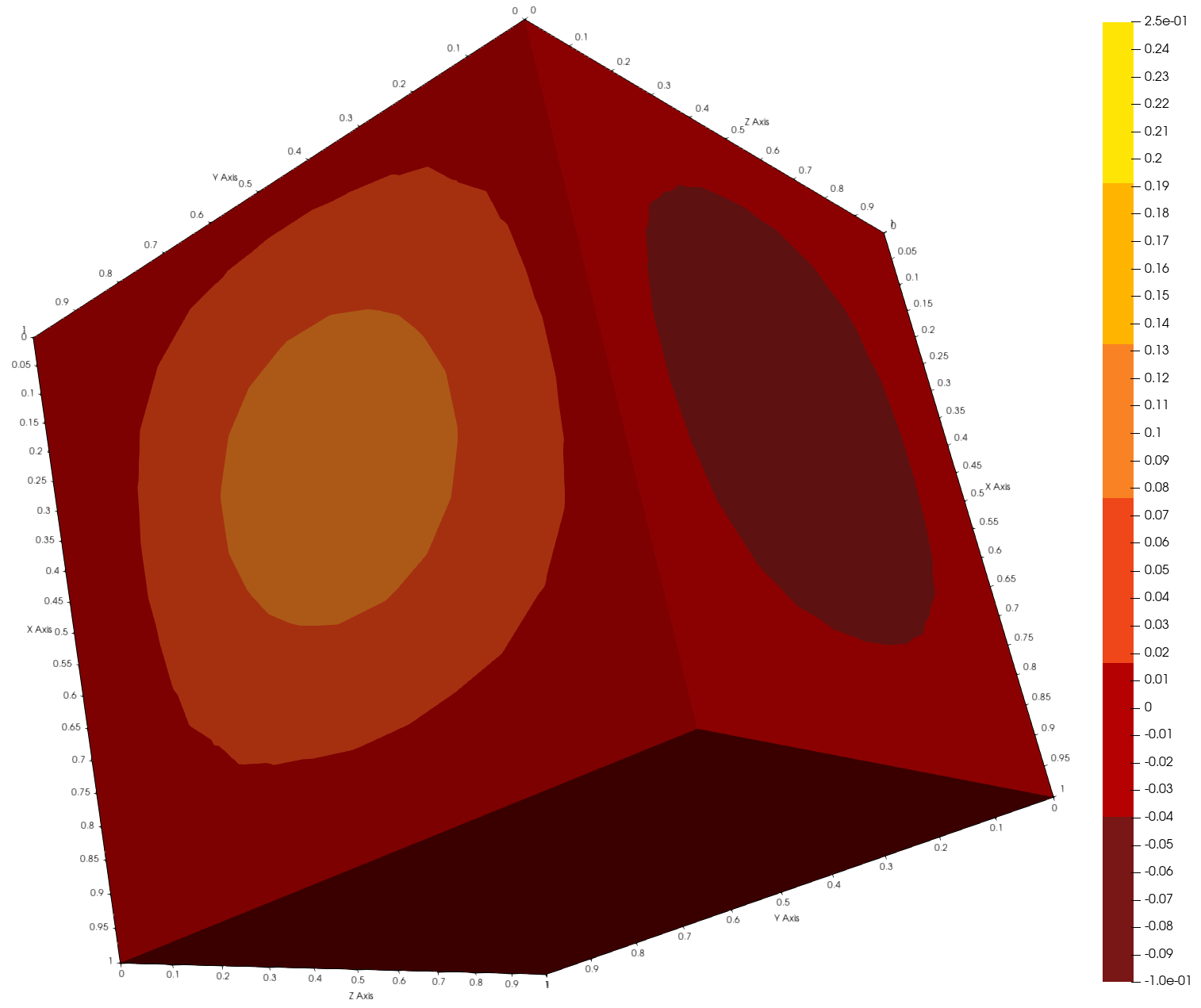}
		\includegraphics[scale=0.09]{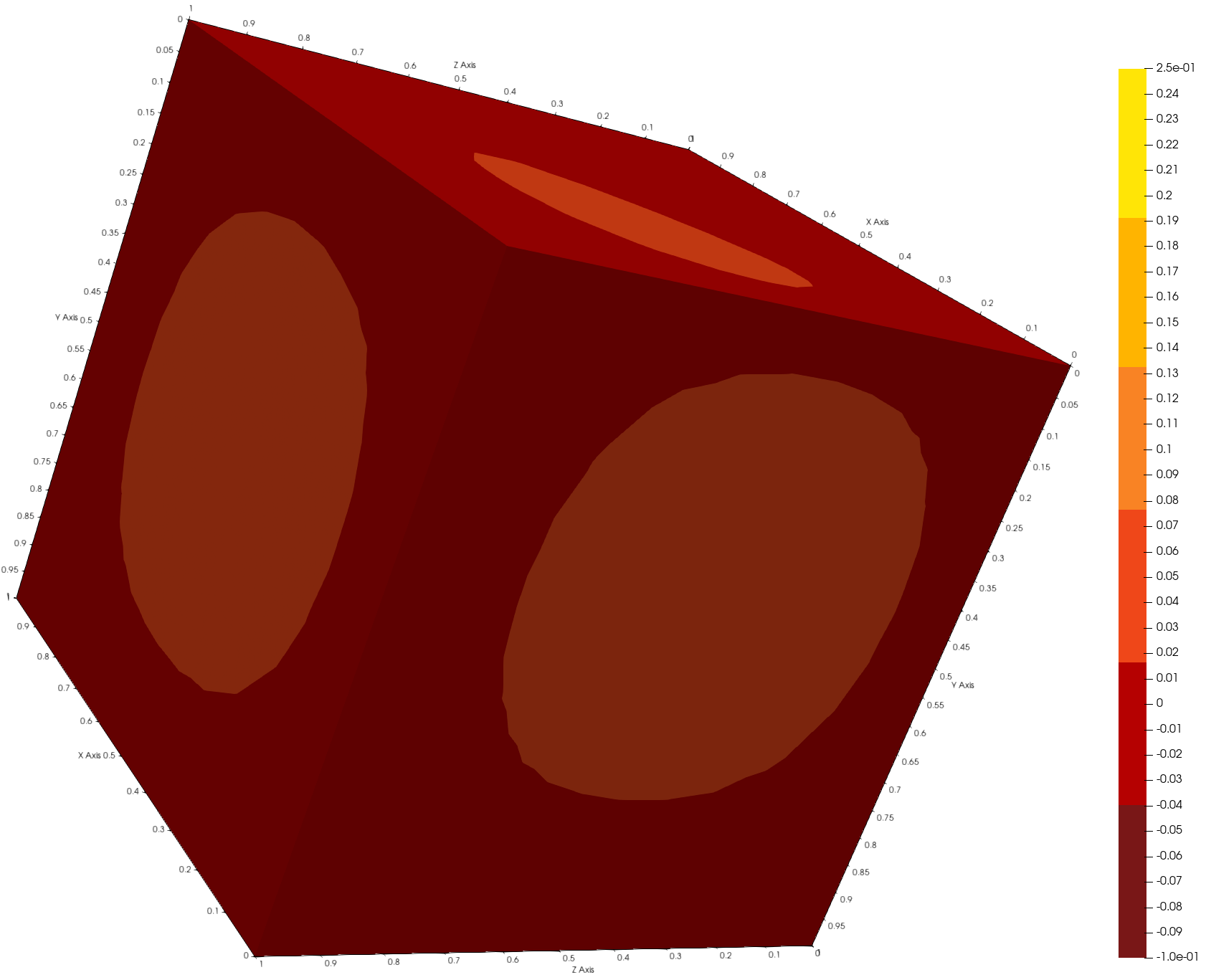}
		\includegraphics[scale=0.09]{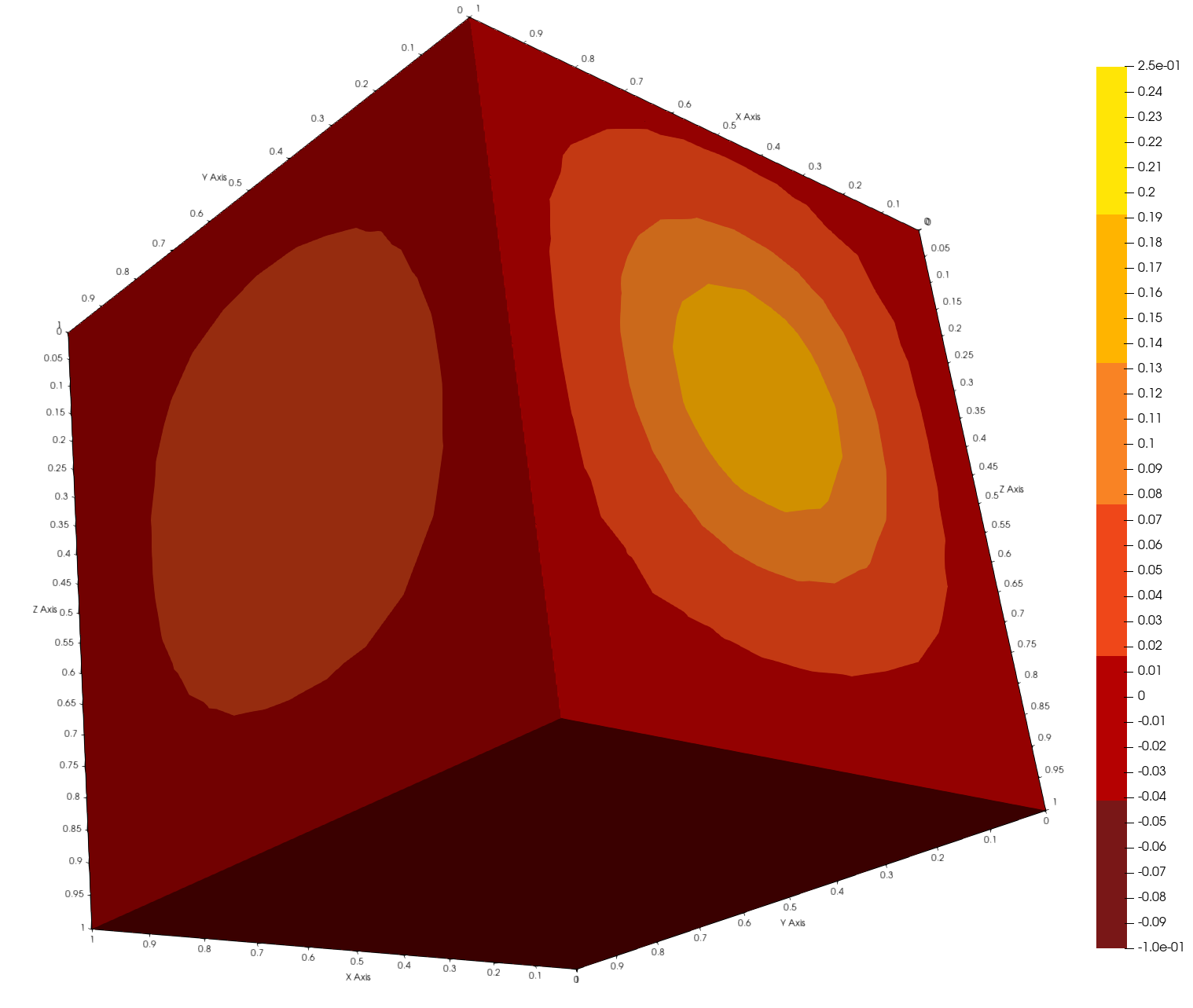} \\
		\includegraphics[scale=0.12]{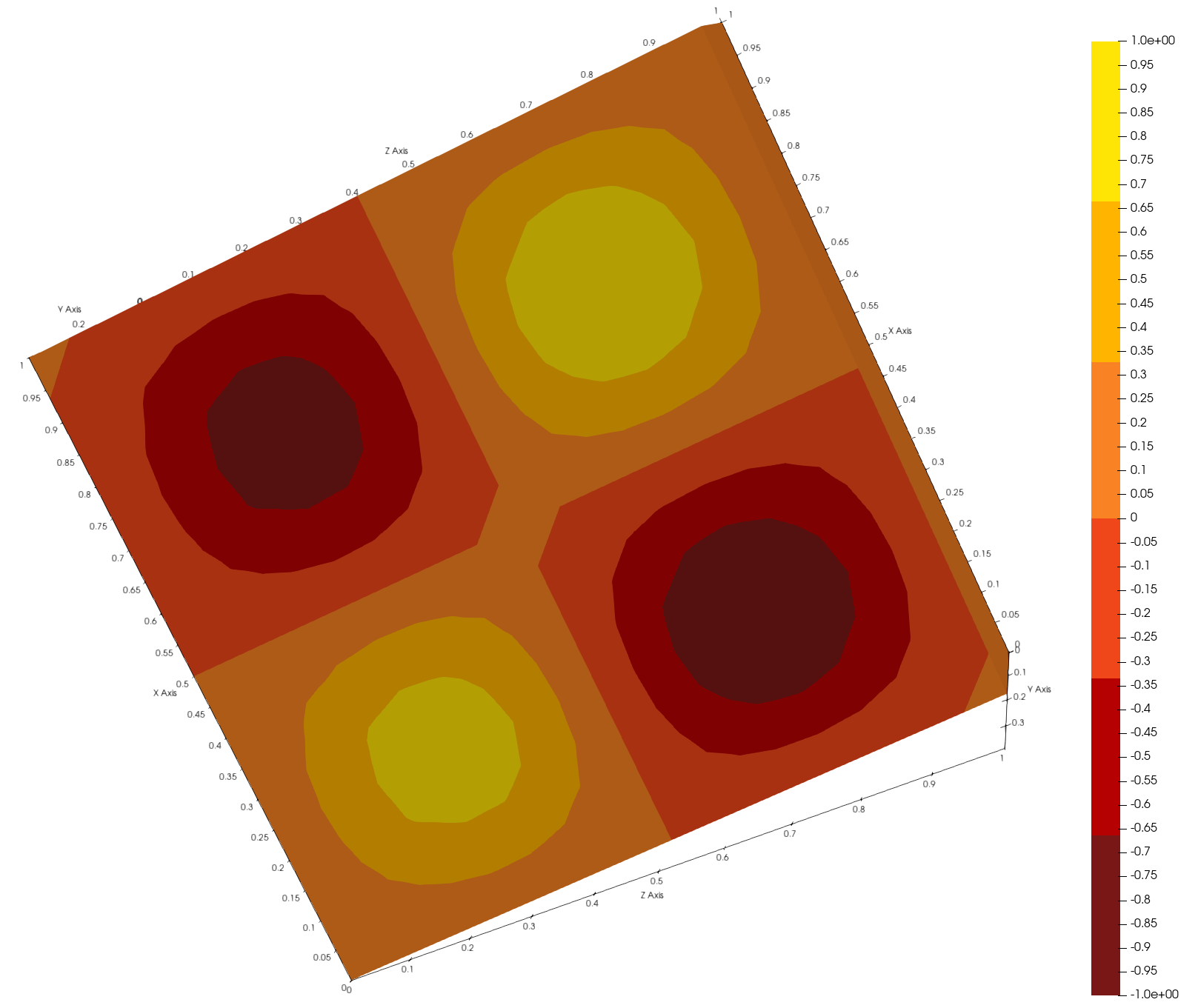}
		\includegraphics[scale=0.12]{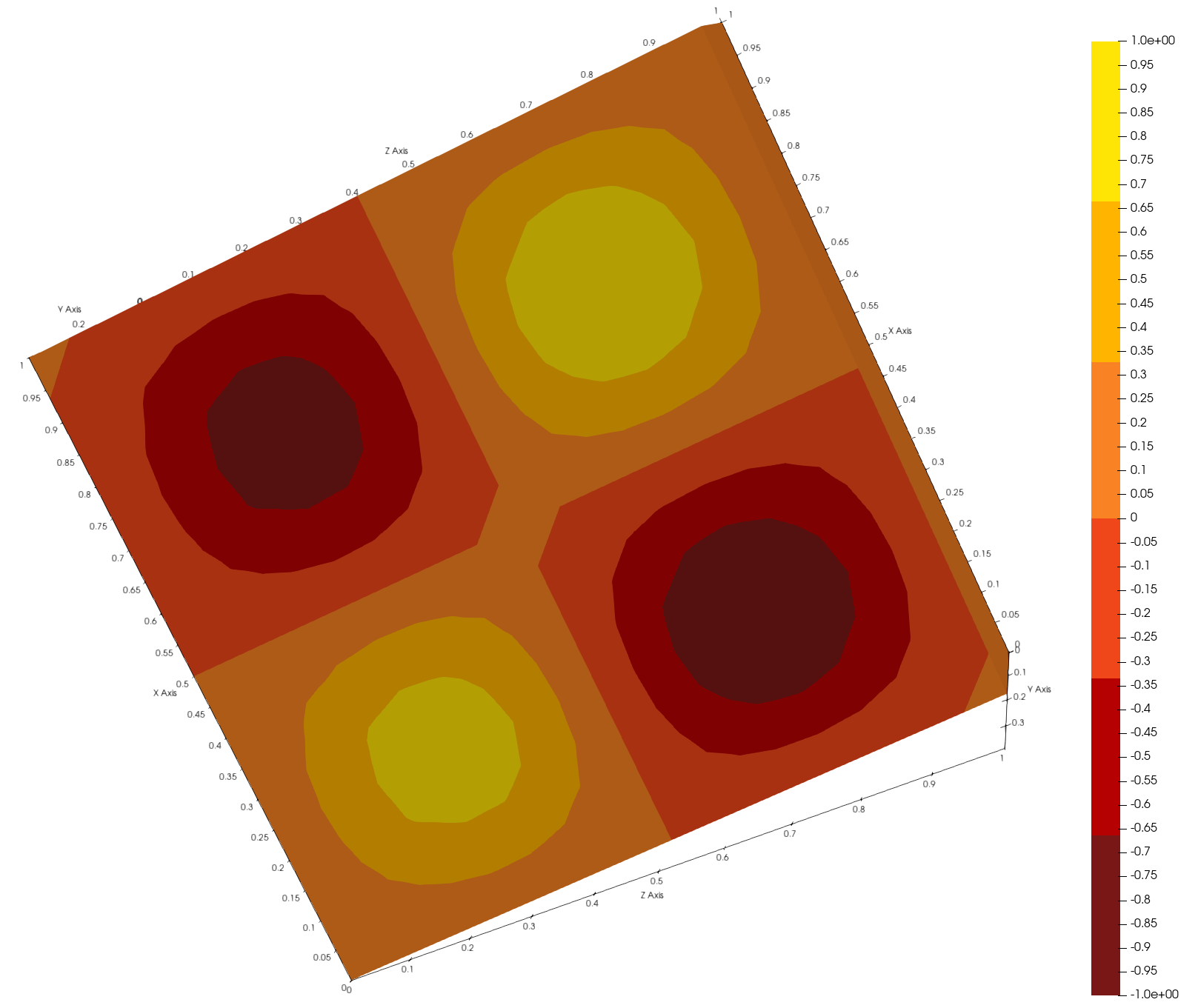}
	\end{center}
	\caption{ 3-D plots of numerical solutions of state velocity $(\y_{h1},\y_{h2},\y_{h3})$, co-state velocity $(\w_{h1},\w_{h2},\w_{h3})$, control $(\u_{h1},\u_{h2},\u_{h3})$, state pressure $(p_h)$ and co-state pressure $(r_h)$, respectively, for Example \ref{Example 6.5.}.} \label{FIGURE 14}
\end{figure}
\begin{figure}
	\begin{center}
		\includegraphics[scale=0.35]{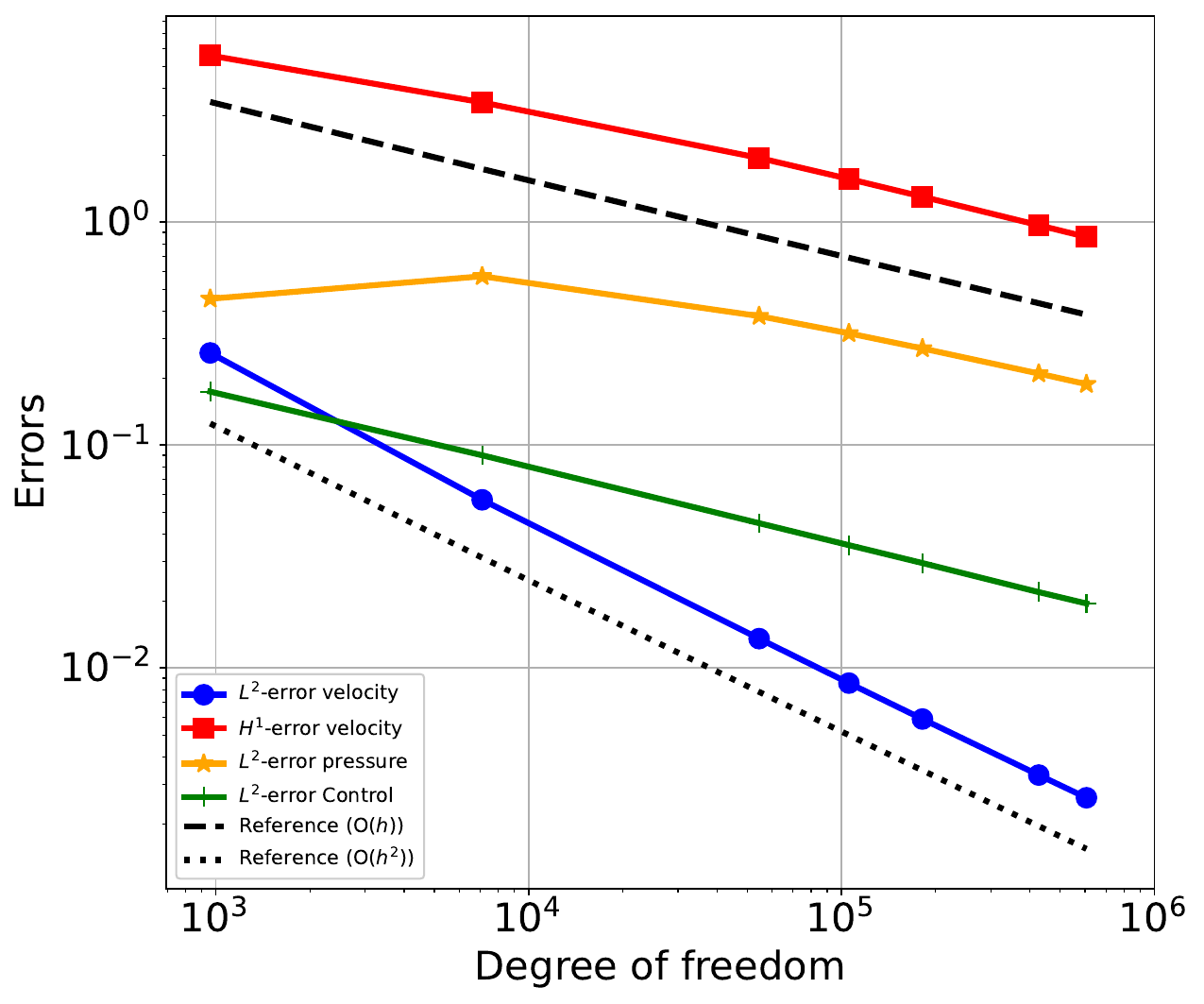}
		\includegraphics[scale=0.35]{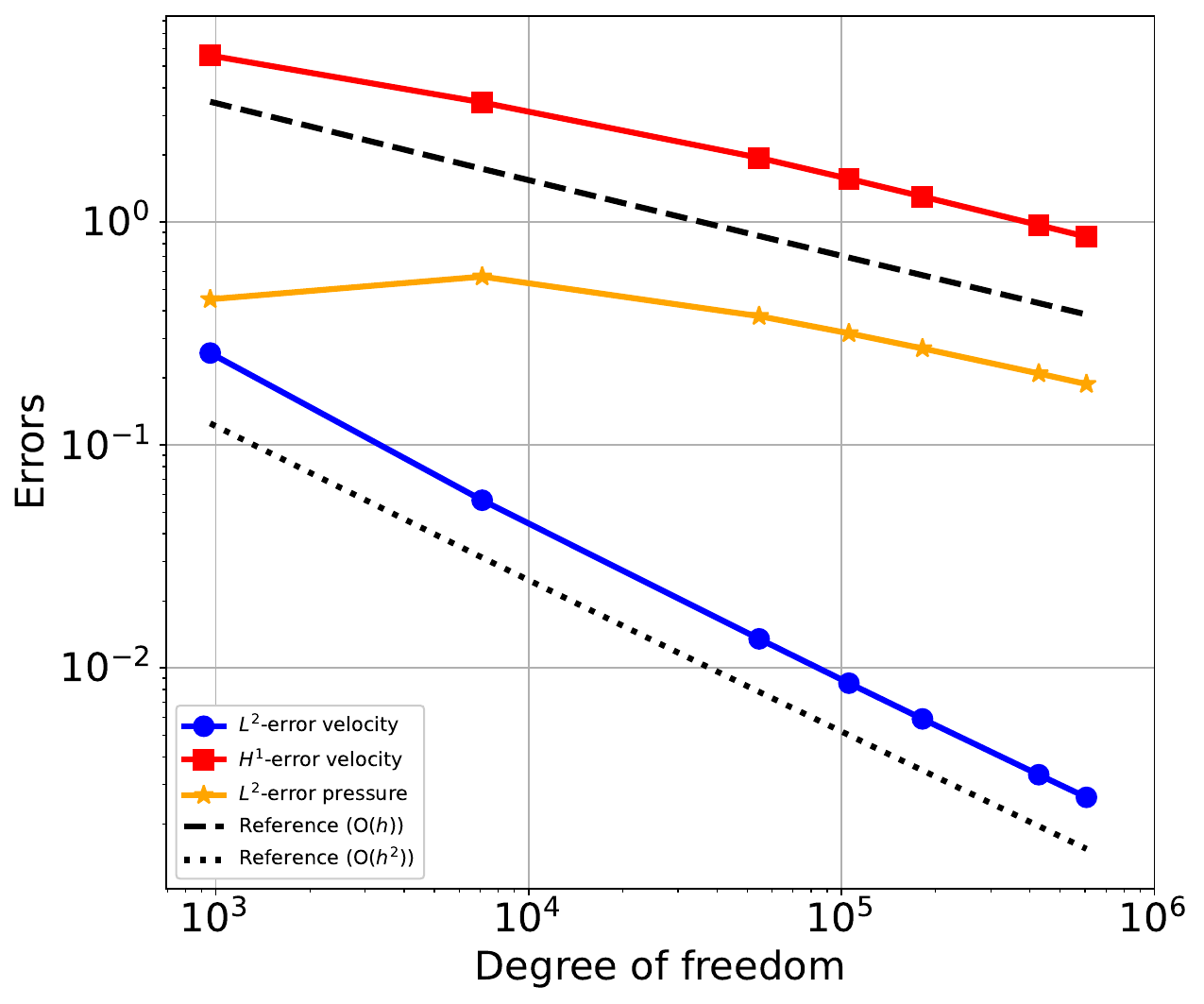} 
	\end{center} \vspace{-3mm}
	\caption{Convergence plots for the state and co-state variables (uniform refinement) for Example- \ref{Example 6.5.}.} \label{FIGURE 15}
\end{figure} 
\begin{figure}\vspace{-3mm}
	\begin{center}
		\includegraphics[scale=0.35]{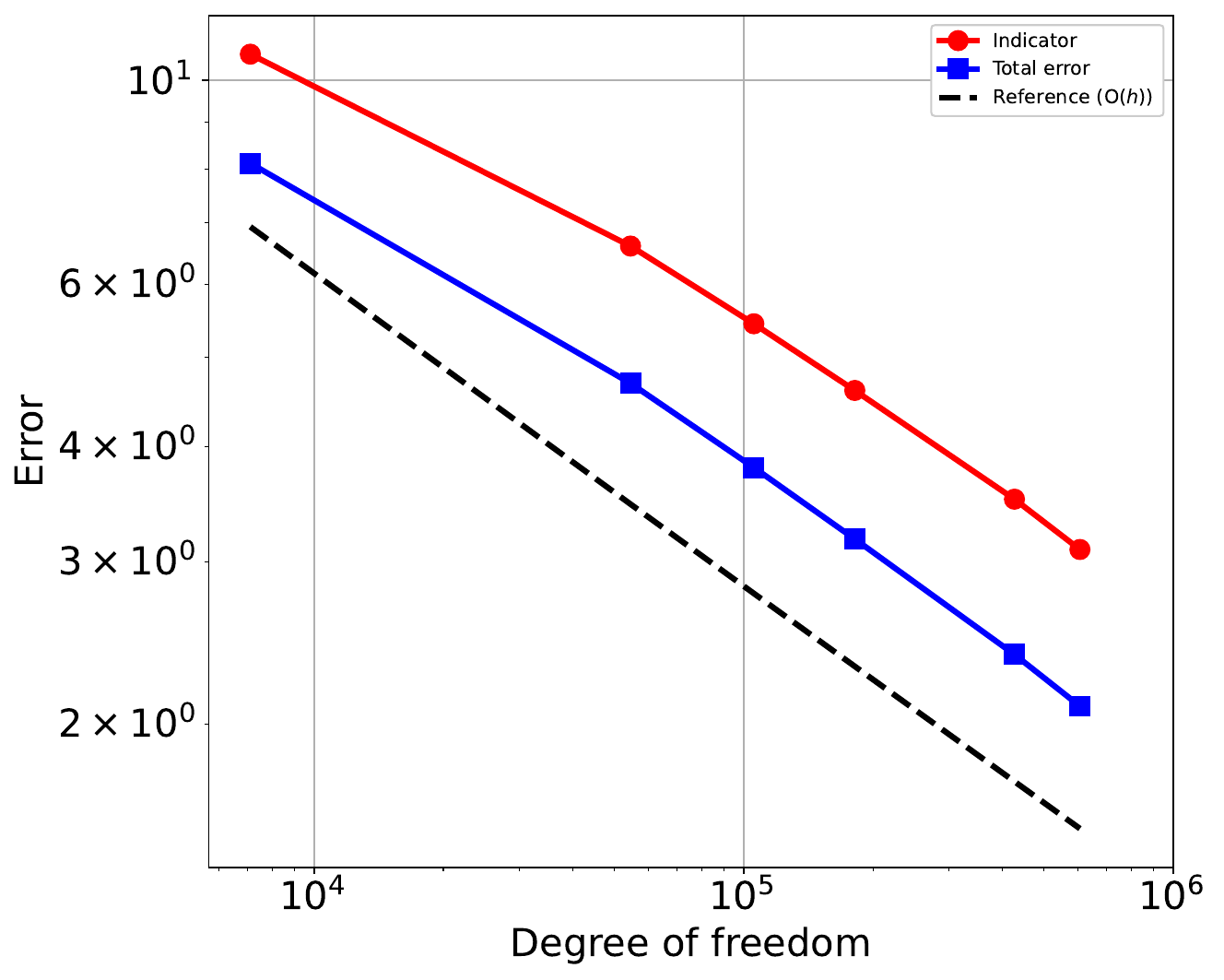}
		\includegraphics[scale=0.35]{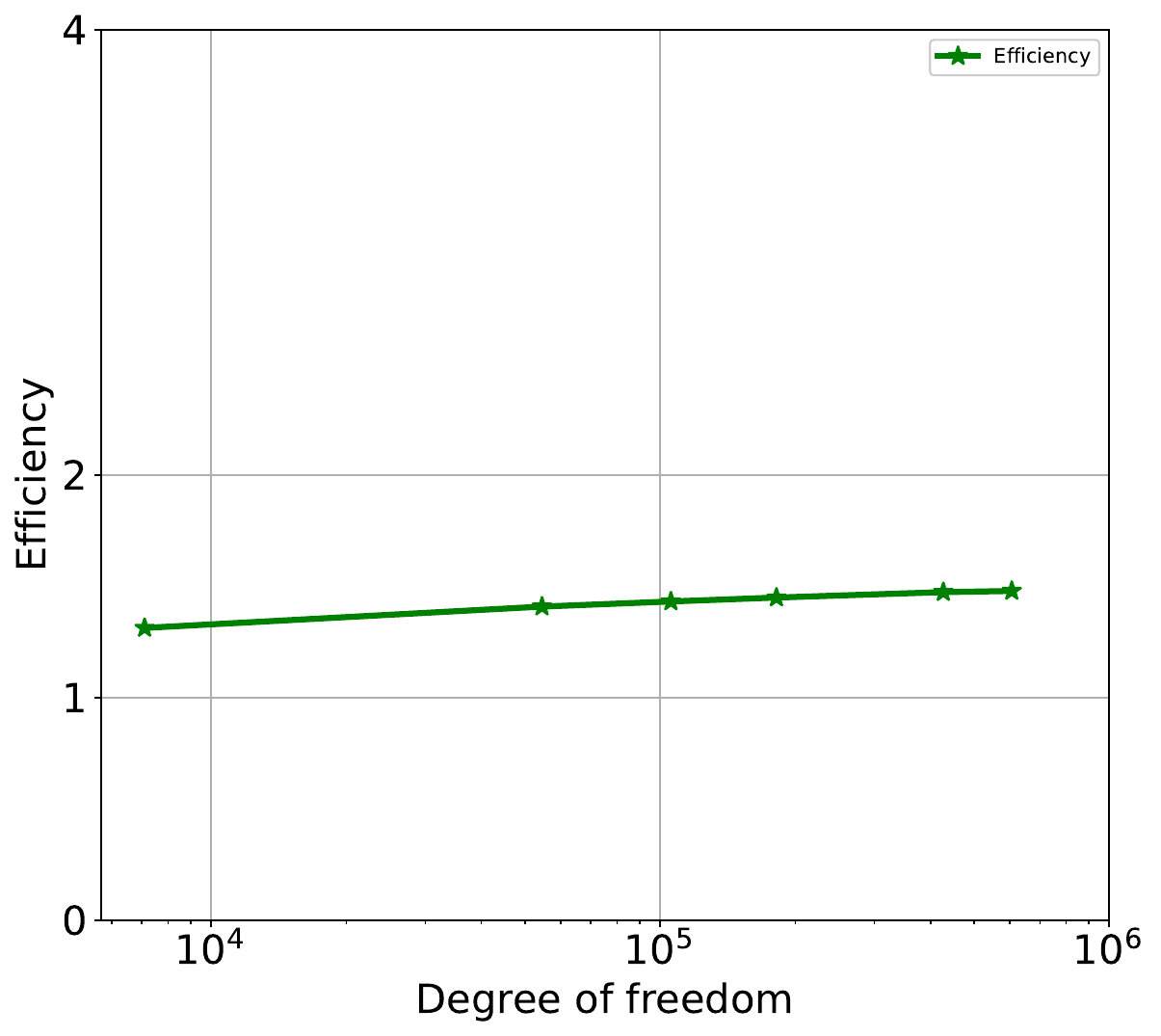}
	\end{center}\vspace{-3mm}
	\caption{Convergence plot for the indicator and total error (uniform refinement) for Example- \ref{Example 6.5.}.} \label{FIGURE 16}
\end{figure}
\section{Conclusion}
In this paper, we have derived a priori error estimates in $L^{2}$ and energy norms using the divergence-conforming DG finite element methods for distributed optimal control problems governed by generalized Oseen equations. Additionally, we established reliable and efficient a posteriori error estimators. Numerical experiments showcase the efficacy of the a posteriori error estimator, validating its performance for both convex and non-convex domains. This work contributes valuable insights into the accuracy and applicability of DG methods in optimizing systems described by the Oseen equations, offering a comprehensive analysis of error estimation techniques for practitioners in numerical optimization and CFD.\\
\textbf{Data availability:}
Upon reasonable request, datasets generated during the research discussed in the paper will be made available by contacting the corresponding author.

\textbf{Declarations:}
The authors affirm that they do not have any conflicts of interest.
	
\bibliographystyle{siam}
\bibliography{reference_arxiv}
\end{document}